\numberwithin{equation}{section}
\newcommand\frg{\mathfrak{g}}
\newcommand\Res{\operatorname{Res}}
\newtheorem{theorem}{Theorem}[section]
\newtheorem{remark}[theorem]{ Remark}
\newtheorem{corollary}[theorem]{Corollary}
\newtheorem{proposition}[theorem]{Proposition}
\newtheorem{lemma}[theorem]{Lemma}
\newtheorem{definition}[theorem]{Definition}
\begin{document}
\title[Rank-level duality of Conformal Blocks for odd orthogonal Lie algebras  ]{Rank-level duality of Conformal Blocks for odd orthogonal Lie algebras in genus $0$.}

\author{Swarnava Mukhopadhyay}
\thanks{The author was partially supported by NSF grant  DMS-0901249.}

\address{Department of Mathematics\\ University of Maryland\\  College Park, MD 20742-4015}
\email{swarnava@umd.edu}
\subjclass[2010]{Primary 17B67, 14H60, Secondary 32G34, 81T40}
\begin{abstract}  Classical invariants for representations of one Lie group can often be related to invariants of some other Lie group. Physics suggests that the right objects to consider for these questions are certain refinements of classical invariants known as conformal blocks. Conformal blocks appear in algebraic geometry as spaces of global sections of line bundles on moduli stacks of parabolic bundles on a smooth curve. Rank-level duality connects a conformal block associated to one Lie algebra to a conformal block for a different Lie algebra. In this paper, we prove a rank-level duality for $\mathfrak{so}(2r+1)$ conformal blocks on the pointed projective line which was suggested by T. Nakanishi and A. Tsuchiya.
\end{abstract}
\maketitle
\section{Introduction}

It has been known for a long time that invariant theory of $\operatorname{GL}_r$ and the intersection theory of Grassmannians are related. This relation gives rise to some interesting isomorphisms between invariants of $\operatorname{SL}_r$ and $\operatorname{SL}_s$ for some positive integer $s$. To make it precise recall that the irreducible polynomial representations of $\operatorname{GL}_r$ are indexed by $r$ tuples of integers $\lambda=(\lambda^1\geq \dots \geq \lambda^r\geq 0) \in \mathbb{Z}^r$. Let $V_{\lambda}$ denote the corresponding irreducible $\operatorname{GL}_r$-module. 

Consider $\lambda=(\lambda^1\geq \dots \geq \lambda^r\geq 0)$ an $r$ tuple of integers such that $\lambda^1 \leq s$. The set of all such $\lambda$'s is in bijection with $\mathcal{Y}_{r,s}$, the set of all Young diagrams with at most $r$ rows and $s$ columns. For $\lambda, \mu, \nu$ in $\mathcal{Y}_{r,s}$ such that $|\lambda| + | \mu| + |\nu|=rs$, we know that 
$$\dim_{\mathbb{C}}(V_{\lambda}\otimes V_{\mu}\otimes V_{\nu})^{\operatorname{SL}_r}=\dim_{\mathbb{C}}(V_{\lambda^T}\otimes V_{\mu^T}\otimes V_{\nu^T})^{\operatorname{SL}_s},$$ where $|\lambda|$ denote the number of boxes in the Young diagram of $\lambda$ and $\lambda^T$ denotes the transpose of the Young diagram of $\lambda$. The above is not only a numerical ``strange" duality but the vector spaces are canonically dual to each other (see ~\cite{BGMA, Bel2}).

Physics suggests that to understand the above kind of relation for other groups the correct objects to consider are certain refinements of the co-invariants known as conformal blocks. Consider a finite dimensional simple complex Lie algebra $\frg$, a Cartan subalgebra $\mathfrak{h}$ and a non-negative integer $\ell$ called the level. Let $\vec{\lambda}= (\lambda_1,\dots, \lambda_n)$ be an $n$ tuple of dominant weights of $\frg$ of level $\ell$. To $n$ distinct points $\vec{p}=(P_1,\dots, P_n)$ with coordinates $\vec{z}=(z_1,\dots, z_n)$ on $\mathbb{P}^1$, one associates a finite dimensional vector space $\mathcal{V}_{\vec{\lambda}}(\frg, \ell, \vec{z})$ known as the space of covacua. The dual of $\mathcal{V}_{\vec{\lambda}}(\frg, \ell, \vec{z})$ is called a conformal block and is denoted by $\mathcal{V}^{\dagger}_{\vec{\lambda}}(\frg, \ell, \vec{z})$. We refer the reader to Section 2 for more details. More generally, one can define conformal blocks associated to $n$ distinct points on curves of arbitrary genus with at most nodal singularities (see Section 2). Conformal blocks form a vector bundle on $\overline{\operatorname{M}}_{g,n}$, the moduli stack of stable $n$ pointed curves of genus $g$.

 Rank-level duality is a duality between conformal blocks associated to two different Lie algebras. In ~\cite{NT}, T. Nakanishi and A. Tsuchiya proved that on $\mathbb{P}^1$, certain conformal blocks of $\mathfrak{sl}(r)$ at level $s$ are dual to conformal blocks of $\mathfrak{sl}(s)$ at level $r$. In ~\cite{A}, T. Abe proved rank-level duality statements between conformal blocks of type $\mathfrak{sp}(2r)$ at level $s$ and $\mathfrak{sp}(2s)$ at level $r$. It is important to point out that there are no known relations between the classical invariants for the Lie algebras $\mathfrak{sp}(2r)$ and $\mathfrak{sp}(2s)$.

The rank-level duality of conformal blocks has a geometric perspective under the identification of conformal blocks with the space of non-abelian $G$-theta functions. This is known as strange duality. The strange duality conjecture for $\operatorname{SL}_n$ says that the space of generalized theta functions associated to the pairs $(p,q)$, $(q,p)$ are naturally dual to each other, the duality being induced from the tensor product of vector bundles. This conjecture was proved by P. Belkale (see ~\cite{Bel3, Bel}) and also by A. Marian and D. Oprea ~\cite{MO}. The symplectic strange duality conjecture in ~\cite{B1} was proved by T. Abe (see ~\cite{A}). For a survey of these results, we refer the reader to ~\cite{MO2, Pa2, Po}. Rank-level duality isomorphisms also give relations in the Picard group of the Grothendieck-Knudsen moduli space $\overline{\operatorname{M}}_{0,n}$ (see \cite{M3}).

The paper ~\cite{NT} (Section 6, page 368) suggests that one can try to answer similar rank-level duality questions for orthogonal Lie algebras on $\mathbb{P}^1$. Furthermore, it is pointed out in ~\cite{NT} that one should only consider the tensor representations, i.e. representations that lift to representations of the special orthogonal group (see Section 6 in ~\cite{NT}). In the following, we answer the above question for odd orthogonal Lie algebras. 

Throughout this paper, we assume that $r,s \geq 3$. Let $P^0_{2s+1}(\mathfrak{so}(2r+1))$ denote the set of tensor representations of $\mathfrak{so}(2r+1)$ of level $2s+1$. We can realize the set $P^0_{2s+1}(\mathfrak{so}(2r+1))$ as a disjoint union of $\mathcal{Y}_{r,s}$ and $\sigma(\mathcal{Y}_{r,s})$, where $\sigma$ is an involution $P^0_{2s+1}(\mathfrak{so}(2r+1))$ that corresponds to the action of a diagram automorphism of the affine Lie algebra $\widehat{\mathfrak{so}}(2r+1)$ (see Section \ref{branching2}). Our main theorem is the following:
\begin{theorem}\label{main} Let $\vec{\lambda}=(\lambda_1,\dots, \lambda_n) \in \mathcal{Y}_{r,s}^n$ be an $n$ tuple of weights in $P^0_{2s+1}(\mathfrak{so}(2r+1))$. 
\begin{enumerate}
\item If $\sum_{i=1}^n|\lambda_i|$ is even, then $$\mathcal{V}_{\vec{\lambda}}(\mathfrak{so}(2r+1), 2s+1, \vec{z})\simeq \mathcal{V}^{\dagger}_{\vec{\lambda}^T}(\mathfrak{so}(2s+1), 2r+1, \vec{z}),$$ where $\vec{z}$ is a tuple of $n$ distinct points on $\mathbb{P}^1$.

\item If $\sum_{i=1}^n|\lambda_i|$ is odd, then $$\mathcal{V}_{\vec{\lambda},0}(\mathfrak{so}(2r+1), 2s+1, \vec{z})\simeq \mathcal{V}^{\dagger}_{\vec{\lambda}^T,\sigma(0)}(\mathfrak{so}(2s+1), 2r+1, \vec{z}),$$ where $\vec{z}$ is a tuple of $(n+1)$ distinct points on $\mathbb{P}^1$.

\item If $\sum_{i=1}^n|\lambda_i|$ is even, then $$\mathcal{V}_{\vec{\lambda},\sigma(0)}(\mathfrak{so}(2r+1), 2s+1, \vec{z})\simeq \mathcal{V}^{\dagger}_{\vec{\lambda}^T,\sigma(0)}(\mathfrak{so}(2s+1), 2r+1, \vec{z}),$$ where $\vec{z}$ is a tuple of $(n+1)$ distinct points on $\mathbb{P}^1$.
\end{enumerate}

\end{theorem}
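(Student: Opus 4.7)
The plan is to exhibit the claimed isomorphism as arising from the conformal embedding
\begin{equation*}
\mathfrak{so}(2r+1) \oplus \mathfrak{so}(2s+1) \hookrightarrow \mathfrak{so}\bigl((2r+1)(2s+1)\bigr)
\end{equation*}
induced by the tensor product of the two defining vector representations. A Dynkin-index calculation shows that the two factors sit inside the target with indices $(2s+1,2r+1)$, and a direct central-charge check
\begin{equation*}
\frac{(2s+1)\,r(2r+1)}{2(r+s)} + \frac{(2r+1)\,s(2s+1)}{2(r+s)} \;=\; \frac{(2r+1)(2s+1)}{2}
\end{equation*}
matches the central charge of $\mathfrak{so}\bigl((2r+1)(2s+1)\bigr)$ at level $1$. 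So both sides of each case of the duality live naturally at the opposite ends of a conformal embedding at level $1$ of the target, and the associated inclusion of affine vertex algebras sends Sugawara to Sugawara.

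\textbf{Branching at level one.} Writing $N=(2r+1)(2s+1)$, the affine algebra $\widehat{\mathfrak{so}}(N)$ has exactly two integrable highest weight modules at level $1$: the vacuum and the fundamental spinor, realised respectively as the NS and Ramond Fock spaces of $N$ free Majorana fermions arranged into an $(2r+1)\times(2s+1)$ rectangle, with $\mathfrak{so}(2r+1)$ acting on rows and $\mathfrak{so}(2s+1)$ on columns. I would then decompose each of these Fock spaces as a $\widehat{\mathfrak{so}}(2r+1)_{2s+1}\otimes\widehat{\mathfrak{so}}(2s+1)_{2r+1}$ module. The expected, and crucial, pattern is that the summands are indexed precisely by pairs $(\lambda,\lambda^T)$ with $\lambda\in\mathcal{Y}_{r,s}$, segregated into the three cases of the theorem according to the parity of $|\lambda|$ and whether a $\sigma$-twist intervenes on one tensor factor. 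The weight $\sigma(0)=(2s+1)\omega_1$ is exactly the image of the vacuum under the affine diagram automorphism of $B_r$, which is why it is the correct companion weight to insert at the extra marked point in cases (2) and (3).

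\textbf{Construction of the duality map.} With the embedding of vertex algebras in hand, at each configuration $\vec z$ and for each $\vec\lambda$ as in the theorem, the branching decomposition above gives a natural injection of $\frg\oplus\frg'$-covacua into $\mathfrak{so}(N)_1$-covacua, producing a linear map
\begin{equation*}
\mathcal{V}_{\vec{\lambda}}(\mathfrak{so}(2r+1),2s+1,\vec z)\,\otimes\,\mathcal{V}_{\vec{\lambda^T}}(\mathfrak{so}(2s+1),2r+1,\vec z) \;\longrightarrow\; \mathcal{V}_{\vec{\Lambda}}(\mathfrak{so}(N),1,\vec z),
\end{equation*}
where $\vec\Lambda$ consists of $0$'s or fundamental spinor weights and, in cases (2) and (3), carries the additional marked point with weight $0$ or $\omega_M$ demanded by the parity bookkeeping. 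Dualising and using the fact that $\mathcal{V}_{\vec\Lambda}(\mathfrak{so}(N),1,\vec z)$ is one-dimensional (by Verlinde at level $1$) produces a candidate pairing and thus a candidate isomorphism in each case of Theorem~\ref{main}.

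\textbf{From map to isomorphism.} To finish, I would prove nondegeneracy and match dimensions. Dimension equality follows from the Kac-Peterson Verlinde formula and reduces, after elementary manipulation of the $S$-matrices of $\mathfrak{so}(2r+1)_{2s+1}$ and $\mathfrak{so}(2s+1)_{2r+1}$, to a symmetric combinatorial identity. Nondegeneracy I would establish by propagation of vacua and factorisation along boundary strata of $\overline{\operatorname{M}}_{0,n+1}$ to reduce to the three-point case, where it can be pinned down by an explicit free-fermion computation on the leading branching components, optionally bootstrapped from the already-known $\mathfrak{sl}$ rank-level duality of Nakanishi-Tsuchiya via $\mathfrak{so}(2r+1)\subset \mathfrak{sl}(2r+1)$. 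I expect the main obstacle to be Step 2, the branching decomposition: one must track simultaneously the NS versus Ramond sector, the parity of $|\lambda|$, and the $\sigma$-twist at level $2s+1$, and it is this triple bookkeeping that is responsible for the three separate statements in the theorem.
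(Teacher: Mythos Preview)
Your overall architecture matches the paper's: conformal embedding, branching, construction of the pairing, dimension check via Verlinde, reduction to three points, explicit computation there. But there are two genuine gaps.

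\textbf{Wrong level-one modules.} For odd $N$, the affine algebra $\widehat{\mathfrak{so}}(N)$ has \emph{three} integrable modules at level $1$, with highest weights $0$, $\omega_1$, and $\omega_d$. The NS Fock space realises $\mathcal{H}_0\oplus\mathcal{H}_{\omega_1}$ (even and odd fermion number), while the Ramond sector gives the spin module $\mathcal{H}_{\omega_d}$. The branching relevant to Theorem~\ref{main} takes place entirely in $\mathcal{H}_0$ and $\mathcal{H}_{\omega_1}$; the spinor does not enter. Consequently your $\vec\Lambda$ should consist of $0$'s and $\omega_1$'s (vector weights), not spinor weights, and the one-dimensionality of the target block is the statement that $\dim\mathcal{V}_{\vec{\omega_1}\cup\vec{0}}(\mathfrak{so}(N),1)=1$ when the number of $\omega_1$'s is even. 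This bookkeeping error propagates through your description of cases (2) and (3).

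\textbf{Factorisation does not work na\"ively.} Your plan to ``establish nondegeneracy by propagation of vacua and factorisation along boundary strata'' hides the main difficulty of the paper. The branching of $\mathcal{H}_0(\mathfrak{so}(N))$ and $\mathcal{H}_{\omega_1}(\mathfrak{so}(N))$ under $\widehat{\mathfrak{so}}(2r+1)\oplus\widehat{\mathfrak{so}}(2s+1)$ contains many components $\mathcal{H}_\lambda\otimes\mathcal{H}_\mu$ for which $V_\lambda\otimes V_\mu$ does \emph{not} occur in the finite-dimensional branching of $V_0$ or $V_{\omega_1}$; these are the ``non-classical'' components. Because of them, the rank-level duality map is \emph{not} compatible with factorisation at the nodal fibre in the usual sense used by Abe for the symplectic case. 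The paper gets around this by working with the sewing expansion of \cite{TUY} over $\operatorname{Spec}\mathbb{C}[[t]]$: one shows that near $t=0$ the duality map decomposes as $\alpha(t)\circ s_\lambda(t)=\sum_{\mu\in B(\lambda)} t^{n_\mu} s_\mu(t)\circ\alpha_{\lambda,\mu}(t)$ with nonzero exponents $n_\mu$ precisely for the non-classical components, and then argues nondegeneracy on the punctured disc rather than on the special fibre. Without this, the induction collapses. Your alternative suggestion of bootstrapping from the $\mathfrak{sl}$ duality via $\mathfrak{so}(2r+1)\subset\mathfrak{sl}(2r+1)$ is not pursued in the paper and would face the same obstruction, since that inclusion is not conformal and does not intertwine the two rank-level pairings.
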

The details about formulations of the Theorem \ref{main} can be found in Section 6. 
\begin{remark}
The above statements are independent of each other. The three statements above along with Proposition \ref{diaaut2} and propagation of vacua (see Section 2.4) cover all possible rank-level duality maps for the pair $(\mathfrak{so}(2r+1),\mathfrak{so}(2s+1))$. In the first statement, all of the chosen weights for $\mathfrak{so}(2r+1)$ lie in $\mathcal{Y}_{r,s}$ and for $\mathfrak{so}(2s+1)$ lie in $\mathcal{Y}_{s,r}$. In the second statement all weights for $\mathfrak{so}(2r+1)$ are in $\mathcal{Y}_{r,s}$ and all except one are in $\mathcal{Y}_{s,r}$ for $\mathfrak{so}(2s+1)$.

Finally the third statement, though has similar conditions as the first statement is independent of it. Here all but one weight of $\mathfrak{so}(2r+1)$ are in $\mathcal{Y}_{r,s}$ and similarly for $\mathfrak{so}(2s+1)$. It might appear that one can use propagation of vacua (see Section 2.4) and diagram automorphisms (see Proposition 3.6) to get the third statement from the first, but this is not possible. The choice of weights between the first (the weight $0$ at level $2r+1$) and the third (the weight $\sigma(0)$ at level $2r+1$) statement differ only at one ( odd number) point and the diagram automorphism $\sigma$ is of order two. So we can not remove the $\sigma$ from the third statement keeping all the other weights in $\mathcal{Y}_{r,s}$.
\end{remark}

We briefly discuss the general context of rank-level duality maps. We closely follow the methods used in ~\cite{A, BP, NT} but there are significant differences in key steps.  

Let $\frg_1$, $\frg_2$ and $\frg$ be simple Lie algebras and consider an embedding of Lie algebras $\phi :\frg_1\oplus \frg_2\rightarrow \frg$. We extend it to a map of affine Lie algebras $\widehat{\phi}:\widehat{\frg}_1\oplus \widehat{\frg}_2\rightarrow \widehat{\frg}$. Consider a level one integrable highest weight module $\mathcal{H}_{\Lambda}(\frg)$, and restrict it to $\widehat{\frg}_1\oplus \widehat{\frg}_2$. The module $\mathcal{H}_{\Lambda}(\frg)$ decomposes into irreducible integrable $\widehat{\frg}_1\oplus \widehat{\frg}_2$-modules of level $\ell=(\ell_1,\ell_2)$ in the following way:

$$\bigoplus_{(\lambda,\mu)\in B(\Lambda)}m_{\lambda, \mu}^{\Lambda}\mathcal{H}_{\lambda}(\frg_1)\otimes \mathcal{H}_{\mu}(\frg_2)\simeq \mathcal{H}_{\Lambda}(\frg),$$
 where $\ell=(\ell_1, \ell_2)$ is the Dynkin multi-index of $\phi$, $m_{\lambda, \mu}^{\Lambda}$ is the multiplicity of the component $\mathcal{H}_{\lambda}(\frg_1)\otimes \mathcal{H}_{\mu}(\frg_2)$ and $B(\Lambda)$ is an indexing set for the components. In general, the number of components $|B(\Lambda)|$ may be infinite. We only consider those embeddings such that $|B(\Lambda)|$ is finite. These embeddings are known as conformal embeddings (see ~\cite{K} for more details).

  Further assume that $m_{\lambda, \mu}^{\Lambda}=1$ for any level 1 weight $\Lambda$. Thus, for an $n$ tuple $\vec{\Lambda}=(\Lambda_1,\dots, \Lambda_n)$ of level one dominant weights of $\frg$, we have an injective map:
  $$\bigotimes_{i=1}^n(\mathcal{H}_{\lambda_i}(\frg_1)\otimes \mathcal{H}_{\mu_i}(\frg_2)) \rightarrow \bigotimes_{i=1}^n\mathcal{H}_{\Lambda_i}(\frg).$$ We consider a tuple of $n$ distinct points $\vec{z}$ on $\mathbb{P}^1$ and taking ``coinvariants", we get a map 
$$\alpha : \mathcal{V}_{\vec{\lambda}}(\frg_1, \ell_1, \vec{z})\otimes \mathcal{V}_{\vec{\mu}}(\frg_2, \ell_2, \vec{z})\rightarrow \mathcal{V}_{\vec{\Lambda}}(\frg, 1, \vec{z}),$$ 
where $\vec{\lambda}=(\lambda_1,\dots, \lambda_n)$ and $\vec{\mu}=(\mu_1,\dots, \mu_n)$. We refer the reader to Section 2 for a detailed description. If $\dim_{\mathbb{C}}(\mathcal{V}_{\vec{\Lambda}}(\frg, 1, \vec{z}))=1$, we get a map $ \mathcal{V}_{\vec{\lambda}}(\frg_1, \ell_1, \vec{z})\rightarrow  \mathcal{V}^{\dagger}_{\vec{\mu}}(\frg_2, \ell_2, \vec{z})$. This map is known as the rank-level duality map. The above analysis with the embedding ${\mathfrak{so}}(2r+1)\oplus {\mathfrak{so}}(2s+1)\rightarrow {\mathfrak{so}}((2r+1)(2s+1))$ give the maps considered in Theorem ~\ref{main}. In Section 3, we define rank-level duality for conformal blocks on $n$ pointed nodal curves of arbitrary genus.

\begin{remark}
Note that the conformal blocks in Theorem ~\ref{main} can be identified with the space of global sections of a line bundle on moduli stacks of $\operatorname{Spin}$-bundles over $\mathbb{P}^1$ with parabolic structures on marked points (see \cite{LS}), but we have not been able to define the rank-level duality map in Theorem ~\ref{main} geometrically. 
\end{remark} 
We now discuss the main body of the proof of Theorem ~\ref{main}. This can be broken up into several steps:

\subsubsection{Dimension Check} Using the Verlinde formula, we show that the dimensions of the source and the target of the conformal blocks in Theorem ~\ref{main} are the same. Unlike the case in ~\cite{A}, we do not have a bijection between $P_{2s+1}(\mathfrak{so}(2r+1))$ and $P_{2r+1}(\mathfrak{so}(2s+1))$. We get around the problem by considering bijection of the orbits of $P_{2s+1}(\mathfrak{so}(2r+1))$ and $P_{2r+1}(\mathfrak{so}(2s+1))$ under the involution $\sigma$ as described in ~\cite{OW}. Let $\vec{\lambda} \in \mathcal{Y}_{r,s}^n$ and $\Gamma=\{1,\sigma\}$ be the group of diagram automorphisms of $\widehat{\mathfrak{so}}(2r+1)$ acting on $P_{2s+1}(\mathfrak{so}(2r+1))$. The Verlinde formula in this case takes the form 

$$\sum_{\mu \in P_{2s+1}(\mathfrak{so}(2r+1))/\Gamma}f(\mu, \vec{\lambda})|\operatorname{Orb}_{\mu}|,$$ where $f(\mu, \vec{\lambda})$ is a function, constant on the orbit of $\mu$ and $|\operatorname{Orb}_{\mu}|$ denotes the cardinality of the orbit of $\mu$ under the action of $\Gamma$. Using a non-trivial trigonometric identity in ~\cite{OW} and a generalization of Lemma A.42 in ~\cite{FH}, we show that $f(\mu, \vec{\lambda})|\operatorname{Orb}_{\mu}|$ is same for the corresponding orbit for the Lie algebra $\mathfrak{so}(2s+1)$ at level $2r+1$. For details, we refer the reader to Section 7. 

\subsubsection{Flatness of rank-level duality} The rank-level duality map has constant rank when $\vec{z}$ varies (see ~\cite{Bel}). The conformal embedding is important in this case as it ensures that the rank-level duality map is flat with respect to the $KZ/Hitchin/WZW$ connection (see ~\cite{Bel}) on sheaves of vacua over any family of smooth curves. 

\subsubsection{Degeneration of a smooth family}Let $C_1\cup C_2$ be a nodal curve, where $C_1$ and $C_2$ are isomorphic to $\mathbb{P}^1$ intersecting at one point. A conformal block on $C_1\cup C_2$ is isomorphic to a direct sum of conformal blocks on the normalization of $C_1\cup C_2$. This property is known as factorization of conformal blocks. A key ingredient in the proof of rank-level duality in ~\cite{A} is the compatibility of the rank-level duality with factorization. T. Abe uses it to conclude that the rank-level duality map is an isomorphism on certain nodal curves. 

This property for nodal curves is no longer true for our present case due to the presence of ``non-classical" components (i.e. components that do not appear in the branching of finite dimensional irreducible modules) in the branching of highest weight integrable modules. We refer the reader to Section 3 for more details. 

We consider a family of smooth curves degenerating to a nodal curve $X_0$. Instead of looking at the nature of the rank-level duality map on the nodal curve, we study the nature of the rank-level duality map on nearby smooth curves of the nodal curve $X_0$ under any conformal embedding. We use the ``sewing procedure" of ~\cite{TUY} to understand the decomposition of the rank-level duality map near the nodal curve $X_0$. The methods used in this step are similar to ~\cite{BP}. This degeneration technique and the flatness of the rank-level duality enable us to use induction similar to ~\cite{A, NT} to reduce to the case for one dimensional conformal blocks on $\mathbb{P}^1$ with three marked points. As before presence of non-classical components bring in complications. Detailed description of the above can be found in Section \ref{mainproof}.

\subsubsection{Minimal Cases} We are now reduced to showing that rank-level duality maps for one dimensional conformal blocks on $\mathbb{P}^1$ with three marked points are non-zero. Our proof of this step differs significantly from that in ~\cite{A} as we were not able to use any geometry of parabolic vector bundles with a non-degenerate form. This is again due to the presence of non-classical components. Using ~\cite{H}, we construct explicit vectors $|v_1\otimes v_2\otimes v_3\rangle$ in the tensor product of three highest weight integrable modules (see Section 8) and show by using gauge symmetry (see Section 2) that $\langle\Psi |v_1\otimes v_2\otimes v_3\rangle \neq 0$, where $\langle \Psi|$ is a non-zero element of an one dimensional conformal block at level one for $\mathfrak{so}((2r+1)(2s+1))$. It will be very interesting if one can define rank-level duality maps purely using the language of vector bundles with a non-degenerate form.

\subsection{Acknowledgments}
I am very grateful to Prakash Belkale for the discussions and ideas that were immensely useful during the preparation of this manuscript. This paper will form a part of my dissertation at UNC-Chapel Hill. I would also like to thank Shrawan Kumar, Christian Pauly and Catharina Stroppel for helpful discussions.

\section{Basic Definitions in the theory of Conformal blocks}\label{conformalblock}

We recall some basic definitions from ~\cite{TUY} in the theory of conformal blocks. Let $\frg$ be a simple Lie algebra over $\mathbb{C}$ and $\mathfrak{h}$ a Cartan subalgebra of $\frg$. We fix the decomposition of $\frg$ into root spaces 
$$\frg=\mathfrak{h} \oplus \sum_{\alpha \in \Delta}\frg_{\alpha},$$ where $\Delta$ is the set of roots decomposed into a union of $\Delta_{+}\sqcup\Delta_{-}$ of positive and negative roots. Let $(,)$ denote the Cartan Killing form on $\frg$ normalized such that $(\theta, \theta)=2$, where $\theta $ is the longest root and we identify $\mathfrak{h}$ with $\mathfrak{h}^*$ using the form $(,)$. 

\subsection{Affine Lie algebras} We define the affine Lie algebra $\widehat{\frg}$ to be 
$$\widehat{\frg}:= \frg\otimes \mathbb{C}((\xi)) \oplus \mathbb{C}c,$$ where $c$ belongs to the center of $\widehat{\frg}$ and the Lie bracket is given as follows:
$$[X\otimes f(\xi), Y\otimes g(\xi)]=[X,Y]\otimes f(\xi)g(\xi) + (X,Y)\Res_{\xi=0}(gdf).c,$$ where $X,Y \in \frg$ and $f(\xi),g(\xi) \in \mathbb{C}((\xi))$. 

Let $X(n)=X\otimes \xi^n$ and $X=X(0)$ for any $X \in \frg $ and $n \in \mathbb{Z}$. The finite dimensional Lie algebra $\frg$ can be realized as a subalgebra of $\widehat{\frg}$ under the identification of $X$ with $X(0)$. 

\subsection{Representation theory of affine Lie algebras} The finite dimensional irreducible modules of $\frg$ are parametrized by the set of dominant integral weights $P_{+} \subset \mathfrak{h}^*$. Let $V_{\lambda}$ denote the irreducible module of highest weight $\lambda \in P_{+}$ and $v_{\lambda}$ denote the highest weight vector.

We fix a positive integer $\ell$ which we call the level. The set of dominant integral weights of level $\ell$ is defined as follows: 
$$P_{\ell}(\frg):=\{ \lambda \in P_{+} | (\lambda, \theta) \leq \ell\}.$$
For each $\lambda \in P_{\ell}(\frg)$ there is a unique irreducible integrable highest weight $\widehat{\frg}$-module $\mathcal{H}_{\lambda}(\frg)$ which satisfies the following properties: 
\begin{enumerate}
\item $V_{\lambda} \subset \mathcal{H}_{\lambda}(\frg),$
\item The central element $c$ of $\widehat{\frg}$ acts by the scalar $\ell$, 
\item Let $v_{\lambda}$ denote a highest weight vector in $V_{\lambda}$, then 
$$X_{\theta}(-1)^{\ell-(\theta, \lambda)+1}v_{\lambda}=0,$$ where $X_{\theta}$ is a non-zero element in the weight space of $\frg_{\theta}$. 
 Moreover, $\mathcal{H}_{\lambda}(\frg)$ is generated by $V_{\lambda}$ over $\widehat{\frg}$ with the above relation. When $\lambda=0$, the corresponding $\widehat{\frg}$-module $\mathcal{H}_{0}(\frg)$ is known as the vacuum representation.
\end{enumerate}

\subsection{Conformal blocks}We fix a $n$ pointed curve $C$ with formal neighborhood $\eta_1,\dots, \eta_n$ around the $n$ points $\vec{p}=(P_1,\dots, P_n)$, which satisfies the following properties :
\begin{enumerate}
\item The curve $C$ has at most nodal singularities,
\item The curve $C$ is smooth at the points $P_1,\dots, P_n$,
\item $C-\{P_1, \dots, P_n\}$ is an affine curve,
\item A stability condition (equivalent to the finiteness of the automorphisms of the pointed curve),
\item Isomorphisms $\eta_i:\widehat{\mathcal{O}}_{C,P_i}\simeq \mathbb{C}[[\xi_i]]$ for $i=1,\dots, n$. 
\end{enumerate}
We denote by $\mathfrak{X}=(C;\vec{p}; \eta_1,\dots,\eta_n)$ the above data associated to the curve $C$. We define another Lie algebra
$$\widehat{\frg}_n:=\bigoplus_{i=1}^n\frg\otimes_{\mathbb{C}}\mathbb{C}((\xi_i)) \oplus \mathbb{C}c,$$ where $c$ belongs to the center of $\widehat{\frg}_n$ and the Lie bracket is given as follows: 
$$[\sum_{i=1}^nX_i \otimes f_i, \sum_{i=1}^nY_i\otimes g_i]:=\sum_{i=1}^n[X_i,Y_i]\otimes f_ig_i + \sum_{i=1}^n(X_i, Y_i)\Res_{\xi_i=0}(g_idf_i)c.$$ 
We define the current algebra to be $$\frg(\mathfrak{X}):=\frg\otimes \Gamma(C-\{P_1,\dots, P_n\}, \mathcal{O}_{C}).$$ By local expansion of functions using the chosen coordinates $\xi_i$, we get the following embedding:
$$\frg(\mathfrak{X}) \hookrightarrow \widehat{\frg}_{n}.$$

Consider an $n$ tuple of weights $\vec{\lambda}=(\lambda_1,\dots, \lambda_n) \in P_{\ell}^n(\frg)$. We set $\mathcal{H}_{\vec{\lambda}}=\mathcal{H}_{\lambda_1}(\frg)\otimes \dots \otimes \mathcal{H}_{\lambda_n}(\frg)$. The algebra $\widehat{\frg}_n$ acts on $\mathcal{H}_{\vec{\lambda}}$. For any $X\in \frg$ and $f\in \mathbb{C}((\xi_i))$, the action of $X\otimes f(\xi_i)$ on the $i$-th component is given by the following: 
$$\rho_i(X\otimes f(\xi_i))|v_1\otimes \dots \otimes v_n\rangle =|v_1\otimes \dots \otimes (X\otimes f(\xi_i)v_i) \otimes \dots \otimes v_n\rangle,$$ where $|v_i\rangle \in \mathcal{H}_{\lambda_i}(\frg)$ for each $i$. 
\begin{definition}
We define the space of conformal blocks 
$$\mathcal{V}^{\dagger}_{\vec{\lambda}}(\mathfrak{X}, \frg):=\operatorname{Hom}_{\mathbb{C}}(\mathcal{H}_{\vec{\lambda}}/\frg(\mathfrak{X})\mathcal{H}_{\vec{\lambda}}, \mathbb{C}).$$ 

We define the space of dual conformal blocks, $\mathcal{V}_{\vec{\lambda}}(\mathfrak{X}, \frg)=\mathcal{H}_{\vec{\lambda}}/\frg(\mathfrak{X})\mathcal{H}_{\vec{\lambda}}$. These are both finite dimensional $\mathbb{C}$-vector spaces which can be defined in families. The dimensions of these vector spaces are given by the {Verlinde formula}. 
\end{definition}
The elements of $\mathcal{V}_{\vec{\lambda}}^{\dagger}(\mathfrak{X},\frg)$ (or $\mathcal{H}_{\vec{\lambda}}^*$) will be denoted by $\langle \Psi |$ and those of the dual conformal blocks (or $\mathcal{H}_{\vec{\lambda}}$) by $|\Phi\rangle $. We will denote the natural pairing by $\langle  \Psi | \Phi \rangle$. 
\begin{remark}\label{gauge}
Let $X\in \frg$ and $f \in \Gamma(C-\{P_1,\dots, P_n\}, \mathcal{O}_C)$, then every element of $\langle \Psi|  \in \mathcal{V}_{\vec{\lambda}}^{\dagger}(\mathfrak{X}, \frg)$ satisfies the following gauge symmetry: 
$$\sum_{i=1}^n \langle \Psi |\rho_i( X\otimes f(\xi_i))\Phi \rangle =0.$$ 
\end{remark} 

\subsection{Propagation of vacua}\label{propofvacua} Let $P_{n+1}$ be a new point on the curve $C$ with coordinate $\eta_{n+1}$ and $\mathfrak{X}'$ denote the new data. We associate the vacuum representation $\mathcal{H}_{0}$ to the point $P_{n+1}$ and $\vec{\lambda}'=\vec{\lambda}\cup \{\lambda_{n+1}=0\}$. The ``propagation of vacuum" gives an isomorphism 
$$f :\mathcal{V}^{\dagger}_{\vec{\lambda}}(\mathfrak{X}', \frg)\rightarrow \mathcal{V}^{\dagger}_{\vec{\lambda}}(\mathfrak{X}, \frg)$$ by the formula 
$$f(\langle \Psi'|)|\Phi \rangle :=\langle \Psi' | \Phi\otimes 0 \rangle, $$ where $|0\rangle$ is a highest weight vector of the representation $\mathcal{H}_{0}$, $| \phi \rangle \in \mathcal{H}_{\vec{\lambda}}$ and $\langle \Psi'|$ is an arbitrary element of 
$\mathcal{V}^{\dagger}_{\vec{\lambda}}(\mathfrak{X}', \frg)$.

\subsection{Conformal blocks in a family}Let $\frg$ be a simple Lie algebra over $\mathbb{C}$ and $\vec{\lambda}\in P^n_{\ell}(\frg)$. Consider a family $\mathcal{F}=(\pi:\mathcal{C}\rightarrow \mathcal{B}; s_1,\dots, s_n; \xi_1,\dots,\xi_n)$ of nodal curves on a base $\mathcal{B}$ with sections $s_i$ and formal coordinates $\xi_i$. In ~\cite{TUY}, a locally free sheaf $\mathcal{V}^{\dagger}_{\vec{\lambda}}(\mathcal{F},\frg)$ known as the sheaf of conformal blocks is constructed over the base $\mathcal{B}$. The sheaf $\mathcal{V}^{\dagger}_{\vec{\lambda}}(\mathcal{F},\frg)$ commutes with base change. Similarly one can define another locally free sheaf $\mathcal{V}_{\vec{\lambda}}(\mathcal{F},\frg)$ as a quotient of $\mathcal{O}_{\mathcal{B}}\otimes \mathcal{H}_{\vec{\lambda}}$.

Moreover, if $\mathcal{F}$ is a family of smooth projective curves, then the sheaf $\mathcal{V}^{\dagger}_{\vec{\lambda}}(\mathcal{F},\frg)$ carries a projectively flat connection known as the $KZ/Hitchin/WZW$ connection. We refer the reader to ~\cite{TUY} for more details.

\begin{remark}
When the level $\ell$ becomes unclear, we also include it in the notation of conformal blocks. Let $\mathfrak{X}$ be the data associated to a $n$ pointed curve with chosen coordinates and $\vec{\lambda}$ be an $n$ tuple of level $\ell$ weights of the Lie algebra $\frg$. The conformal block is denoted by $\mathcal{V}^{\dagger}_{\vec{\lambda}}(\mathfrak{X},\frg,\ell)$ and the dual conformal block is denoted by $\mathcal{V}_{\vec{\lambda}}(\mathfrak{X},\frg,\ell)$.
\end{remark}

\section{Conformal Subalgebras and Rank-level duality map}\label{branching}In this section, we discuss conformal embeddings of Lie algebras and give a general formulation of rank-level duality maps. 

\subsection{Conformal embedding}
Let $\mathfrak{s}$, $\frg$ be two simple Lie algebras and $\phi : \mathfrak{s} \rightarrow \mathfrak{g}$ an embedding of Lie algebras. Let $(,)_{\mathfrak{s}}$ and $(,)_{\frg}$ denote normalized Cartan killing forms such that the length of the longest root is $2$. We define the Dynkin index of $\phi$ to be the unique integer $d_{\phi}$ satisfying 
$$(\phi(x), \phi(y))_{\frg}=d_{\phi}(x,y)_{\mathfrak{s}}$$ for all $x, y \in \mathfrak{s}$. When $\mathfrak{s}=\mathfrak{g}_1\oplus \mathfrak{g}_2$ is semisimple, we define the Dynkin multi-index of $\phi=\phi_1\oplus \phi_2:\mathfrak{g}_1\oplus\mathfrak{g}_2 \rightarrow \mathfrak{g}$ to be $d_{\phi}=(d_{\phi_1}, d_{\phi_2})$.

If $\frg$ is simple, for any  ${\lambda} \in P_{\ell}(\frg)$, we define the conformal anomaly $c(\frg, \ell)$ and the trace anomaly $\Delta_{\lambda}(\frg,\ell)$ as follows:
$$c(\frg, \ell)=\frac{\ell\dim{\frg}}{g^*+\ell} \  \ \mbox{and} \ \  \Delta_{{\lambda}}(\frg, \ell)=\frac{({\lambda}, {\lambda}+2{\rho})}{2(g^*+\ell)},$$ where $g^*$ is the dual Coxeter number of $\frg$ and ${\rho}$ denotes the half sum of positive roots, also known as the Weyl vector.
If $\frg=\frg_1 \oplus \frg_2$ is semisimple, we define the conformal anomaly and trace anomaly by taking sum of the conformal anomalies over all simple components.

\begin{definition}
Let $\phi=(\phi_1,\phi_2): \mathfrak{s}=\mathfrak{g}_1\oplus \mathfrak{g}_2 \rightarrow \frg$ be an embedding of Lie algebras with Dynkin multi-index $k=(k_1,k_2)$. We define $\phi$ to be a conformal embedding $\mathfrak{s}$ in $\mathfrak{g}$ at level $\ell$ if $c({\frg_1}, k_1\ell)+c(\frg_2,k_2\ell)=c(\mathfrak{g}, \ell)$. 
\end{definition}

It is shown in ~\cite{K} that the above equality only holds if $\ell=1$. Many familiar and important embeddings are conformal. For a complete list of conformal embeddings, we refer the reader to ~\cite{BB}. Next, we list two important properties which makes conformal embeddings special.
\begin{enumerate}
\item Let $\mathfrak{s}=\frg_1\oplus \frg_2$ be a semisimple complex Lie algebra, then an embedding
$\phi: \mathfrak{s}\rightarrow  \frg$ is a conformal subalgebra if and only if any irreducible $\widehat{\frg}$-module
$\mathcal{H}_{\Lambda}(\frg)$ of level one decompose into a finite sum of irreducible $\widehat{\mathfrak{s}}$-modules of level $\ell=(\ell_1,\ell_2)$, where $\ell$ is the Dynkin multi-index of the embedding $\phi$. A proof of the above can be found in \cite{KW}.
\item If $\phi: \mathfrak{s} \rightarrow \frg$ is a conformal embedding, then the action of the Virasoro operators are the same, i.e. for any integer $k$, the following equality holds: 
$$L_k^{\mathfrak{s}}=L_k^{\frg} \in \operatorname{End}(\mathcal{H}_{\Lambda}(\frg)),$$ where $L_k^{\mathfrak{s}}$ and $L_{k}^{\mathfrak{g}}$ are $k$-th Virasoro operators of $\mathfrak{s}$ and $\mathfrak{g}$ acting at level $\ell$ and one respectively. We refer the reader to \cite{KW} for more details.
\end{enumerate}

\subsection{General context of rank-level duality} Consider a level one integrable highest weight $\widehat{\frg}$-module $\mathcal{H}_{\Lambda}(\frg)$ and restrict it to $\widehat{\frg}_1\oplus \widehat{\frg}_2$. The module $\mathcal{H}_{\Lambda}(\frg)$ decomposes into irreducible integrable $\widehat{\frg}_1\oplus \widehat{\frg}_2$-modules of level $\ell=(\ell_1,\ell_2)$ as follows:

$$\bigoplus_{(\lambda,\mu)\in B(\Lambda)}m_{\lambda, \mu}^{\Lambda}\mathcal{H}_{\lambda}(\frg_1)\otimes \mathcal{H}_{\mu}(\frg_2)\simeq \mathcal{H}_{\Lambda}(\frg),$$
 where $\ell$ is the Dynkin multi-index of $\phi$ and $m_{\lambda, \mu}^{\Lambda}$ is the multiplicity of the component $\mathcal{H}_{\lambda}(\frg_1)\otimes \mathcal{H}_{\mu}(\frg_2)$. Since the embedding is conformal, we know that both $|B(\Lambda)|$ and $m_{\lambda,\mu}^{\Lambda}$ are finite.

We consider only those conformal embeddings such that for every $\Lambda \in P_1(\frg)$ and $(\lambda, \mu )\in B(\Lambda)$, the multiplicity $m_{\lambda, \mu}^{\Lambda}=1$. Let $\vec{\Lambda}=(\Lambda_1,\dots, \Lambda_n)$ be a $n$-tuple of level one dominant weights of $\frg$. We consider $\mathcal{H}_{\vec{\Lambda}}(\frg)$ and restrict it to $\widehat{\frg}_1\oplus \widehat{\frg}_2$. Choose $\vec{\lambda}=(\lambda_1,\dots,\lambda_n)$ and $\vec{\mu}=(\mu_1,\dots, \mu_n)$ such that $(\lambda_i, \mu_i) \in B(\Lambda_i)$ for all $1\leq i\leq n$. We get an injective map $$\bigotimes_{i=1}^n(\mathcal{H}_{\lambda_i}(\frg_1)\otimes \mathcal{H}_{\mu_i}(\frg_2)) \rightarrow \bigotimes_{i=1}^n\mathcal{H}_{\Lambda_i}(\frg).$$  

Let $\mathfrak{X}$ denote the data associated to a curve $C$ of genus $g$ with $n$ distinct points $\vec{p}=(P_1,\dots, P_n)$ with chosen coordinates $\xi_1, \dots, \xi_n$. Taking coinvariants with respect to $\frg(\mathfrak{X})$, we get the following map:
$$\alpha : \mathcal{V}_{\vec{\lambda}}(\mathfrak{X}, \frg_1, \ell_1 )\otimes \mathcal{V}_{\vec{\mu}}(\mathfrak{X},\frg_2, \ell_2 )\rightarrow \mathcal{V}_{\vec{\Lambda}}(\mathfrak{X}, \frg, 1),$$ 

If $\dim_{\mathbb{C}}(\mathcal{V}_{\vec{\Lambda}}(\mathfrak{X}, \frg, 1))=1$, we get a map well defined up to constants 
$$ \alpha^{\vee}: \mathcal{V}_{\vec{\lambda}}(\mathfrak{X}, \frg_1, \ell_1)\rightarrow  \mathcal{V}^{\dagger}_{\vec{\mu}}(\mathfrak{X},\frg_2, \ell_2).$$ This map is known as the rank-level duality map.

\begin{definition}\label{admiss}Let $\vec{\lambda}\in P^n_{\ell_1}(\frg_1)$ and $\vec{\mu}\in P^n_{\ell_2}(\frg_2)$. The pair $(\vec{\lambda},\vec{\mu})$ is called admissible if one can define a rank-level duality map between the corresponding conformal blocks.
\end{definition}
Let $\mathcal{F}=(\pi:\mathcal{C}\rightarrow \mathcal{B}; s_1,\dots, s_n; \xi_1,\dots, \xi_n)$ be a family of nodal curves on a base $\mathcal{B}$ with sections $s_i$ and local coordinates $\xi_i$. The map $\alpha$ can be easily extended to a map of sheaves 
$$\alpha(\mathcal{F}): \mathcal{V}_{\vec{\lambda}}(\mathcal{F}, \frg_1, \ell_1 )\otimes \mathcal{V}_{\vec{\mu}}(\mathcal{F},\frg_2, \ell_2 )\rightarrow \mathcal{V}_{\vec{\Lambda}}(\mathcal{F}, \frg, 1).$$

\subsection{Properties of rank-level duality}In this section, we recall some interesting properties of rank-level duality maps. The following proposition tells us about the behavior of the rank-level duality map in a smooth family of curves. For a proof, we refer the reader to ~\cite{Bel}.
\begin{proposition}\label{flatness}Let $\mathcal{F}=(\pi:\mathcal{C}\rightarrow \mathcal{B}; s_1,\dots, s_n; \xi_1,\dots, \xi_n)$ be a family of smooth projective curves on a base $\mathcal{B}$ with sections $s_i$ and local coordinates $\xi_i$. Then, the rank-level duality map $\alpha$ is projectively flat with respect to the $KZ/Hitchin/WZW$ connection.
\end{proposition}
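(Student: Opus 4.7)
I recall from [TUY] the explicit form of the WZW connection on a sheaf of conformal blocks over a smooth family $\mathcal{F}$. Given a vector field $\theta$ on $\mathcal{B}$, one chooses a lift $\tilde{\theta}$ as a meromorphic vector field on the total space $\mathcal{C}$ that is holomorphic off the sections $s_i$; letting $\tilde{\theta}_i$ denote its Laurent expansion in $\xi_i$, the connection has the local form
$$\nabla^{\frg_1}_{\theta} = \theta + \sum_{i=1}^n \rho_i(T^{\frg_1}[\tilde{\theta}_i]),$$
where $T^{\frg_1}[\tilde{\theta}_i] \in \End(\mathcal{H}_{\lambda_i}(\frg_1))$ is built from the Sugawara operators $L_n^{\frg_1}$ at level $\ell_1$, and similarly for $\frg_2$ at level $\ell_2$ and for $\frg$ at level $1$.

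My first step would be to observe that for each $i$ the branching inclusion
$$\iota_i \colon \mathcal{H}_{\lambda_i}(\frg_1)\otimes \mathcal{H}_{\mu_i}(\frg_2) \hookrightarrow \mathcal{H}_{\Lambda_i}(\frg)$$
intertwines the Sugawara actions. Indeed, on the source one acts through $L_n^{\frg_1}\otimes 1+1\otimes L_n^{\frg_2}$, and property (2) of conformal embeddings recalled in Section 3, namely $L_n^{\frg_1}+L_n^{\frg_2}=L_n^{\frg}$ on $\mathcal{H}_{\Lambda_i}(\frg)$, identifies this with $L_n^{\frg}$. Summing over the marked points, the map $\otimes_i\iota_i$ from $\mathcal{H}_{\vec{\lambda}}(\frg_1)\otimes \mathcal{H}_{\vec{\mu}}(\frg_2)$ into $\mathcal{H}_{\vec{\Lambda}}(\frg)$ commutes with the total Sugawara operator appearing in the connection.

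Next I would descend this intertwining to coinvariants. The Sugawara operators preserve the $\frg(\mathfrak{X})$-invariant structure (this is the TUY verification that makes the WZW connection well-defined on conformal blocks), and the rank-level duality map $\alpha$ is induced on coinvariants by $\otimes_i\iota_i$. Hence the commutativity at the level of $\mathcal{H}_{\vec{\lambda}}$ descends to a commutative diagram identifying $\nabla^{\frg_1}\otimes 1 + 1\otimes \nabla^{\frg_2}$ on the source with the pullback by $\alpha$ of $\nabla^{\frg}$ on the target.

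The main obstacle is that the WZW connection depends on the choice of lift $\tilde{\theta}$ and is only projectively flat: a different lift alters the connection by a scalar constructed from the central charge via a residue pairing. One has to check that this scalar ambiguity on the source matches the one on the target, which comes down precisely to the conformal embedding identity $c(\frg_1,\ell_1)+c(\frg_2,\ell_2)=c(\frg,1)$. Once this scalar is absorbed, the intertwining by $\alpha$ holds only up to an overall constant, which is exactly the statement of projective flatness.
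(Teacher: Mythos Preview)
Your argument is correct and is essentially the standard one. The paper does not give its own proof of this proposition; it simply refers the reader to \cite{Bel}, and the argument there is exactly the Sugawara/Virasoro intertwining you outline: the equality $L_n^{\frg_1}+L_n^{\frg_2}=L_n^{\frg}$ on $\mathcal{H}_{\Lambda}(\frg)$ (property (2) of conformal embeddings in Section~\ref{branching}) makes the branching inclusion commute with the energy-momentum operators, and this descends to coinvariants to give compatibility with the WZW connection.

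One small simplification of your last paragraph: once you fix a single lift $\tilde{\theta}$ and use it to define all three connection operators $\nabla^{\frg_1}_{\theta}$, $\nabla^{\frg_2}_{\theta}$, $\nabla^{\frg}_{\theta}$, the intertwining by $\alpha$ is \emph{exact}, not merely up to scalar, because the Virasoro identity is an exact equality of operators. The qualifier ``projectively'' in the statement refers only to the fact that the individual connections are themselves only projectively defined (the central-charge scalar appears when one changes the lift); the central-charge equality $c(\frg_1,\ell_1)+c(\frg_2,\ell_2)=c(\frg,1)$ then guarantees that this ambiguity is the same on both sides, so $\alpha$ is well-defined as a map of projectively flat bundles. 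Your account already contains this, just stated with slightly more caution than needed.
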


The rank-level duality map commutes with the propagation of vacua. The following has a direct proof. 
\begin{proposition}\label{vacua}
Let $Q$ be a point on the curve $C$ distinct from $\vec{p}=(P_1,\dots, P_n)$ and $\mathfrak{X}'$ be the data associated to the $n+1$ pointed curve. Consider $\vec{\lambda}'=(\lambda_1,\dots, \lambda_n,0)$ and $\vec{\mu}'=(\mu_1,\dots,\mu_n,0)$. The rank-level duality map $\mathcal{V}_{\vec{\lambda}}(\mathfrak{X}, \frg_1, \ell_1)\rightarrow \mathcal{V}^{\dagger}_{\vec{\mu}}(\mathfrak{X},\frg_2, \ell_2)$ is an isomorphism if and only if the rank-level duality map 
$\mathcal{V}_{\vec{\lambda}'}(\mathfrak{X}', \frg_1, \ell_1)\rightarrow  \mathcal{V}^{\dagger}_{\vec{\mu}'}(\mathfrak{X}',\frg_2, \ell_2)$ is an isomorphism.
\end{proposition}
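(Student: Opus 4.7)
The plan is to realize both rank-level duality maps as the horizontal arrows of a commutative square whose vertical arrows are propagation-of-vacua isomorphisms for the three algebras $\frg_1$, $\frg_2$, and $\frg$. Once the square is set up, the maps $\alpha^\vee$ and $(\alpha')^\vee$ are conjugate by isomorphisms, so one is an isomorphism iff the other is.

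The key ingredient is the observation that the branching embedding $\mathcal{H}_0(\frg_1)\otimes\mathcal{H}_0(\frg_2)\hookrightarrow\mathcal{H}_0(\frg)$ indexed by $(0,0)\in B(0)$ carries $|0\rangle\otimes|0\rangle$ to a nonzero scalar multiple of the vacuum vector of $\mathcal{H}_0(\frg)$. Indeed, $|0\rangle\in\mathcal{H}_0(\frg)$ is annihilated by $\frg\supset\frg_1\oplus\frg_2$ and by the annihilating subalgebra $\frg\otimes\xi\mathbb{C}[[\xi]]$, so restricted to $\widehat{\frg}_1\oplus\widehat{\frg}_2$ at levels $(\ell_1,\ell_2)$ it generates an integrable highest weight submodule of bi-weight $(0,0)$; by the standing assumption $m^0_{0,0}=1$, this submodule is the unique copy of $\mathcal{H}_0(\frg_1)\otimes\mathcal{H}_0(\frg_2)$ in the branching.

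With $\vec{\Lambda'}=(\Lambda_1,\dots,\Lambda_n,0)$, I then form the square
$$\begin{CD}
\mathcal{V}_{\vec{\lambda}}(\mathfrak{X},\frg_1,\ell_1)\otimes\mathcal{V}_{\vec{\mu}}(\mathfrak{X},\frg_2,\ell_2) @>{\alpha}>> \mathcal{V}_{\vec{\Lambda}}(\mathfrak{X},\frg,1) \\
@VVV @VVV \\
\mathcal{V}_{\vec{\lambda'}}(\mathfrak{X}',\frg_1,\ell_1)\otimes\mathcal{V}_{\vec{\mu'}}(\mathfrak{X}',\frg_2,\ell_2) @>{\alpha'}>> \mathcal{V}_{\vec{\Lambda'}}(\mathfrak{X}',\frg,1)
\end{CD}$$
whose vertical arrows are the propagation-of-vacua isomorphisms $[|\Phi\rangle]\mapsto[|\Phi\rangle\otimes|0\rangle]$ from Section~\ref{propofvacua}, applied to the appropriate algebra at the inserted point $Q$. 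Commutativity is immediate from the previous paragraph, since the branching at $Q$ sends $|0\rangle\otimes|0\rangle$ to $|0\rangle$.

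Finally, propagation preserves dimensions, so $\mathcal{V}_{\vec{\Lambda'}}(\mathfrak{X}',\frg,1)$ is again one-dimensional and $\alpha'$ induces a rank-level duality map $(\alpha')^\vee$. Dualizing the right-hand column of the square converts $\alpha$ and $\alpha'$ into $\alpha^\vee$ and $(\alpha')^\vee$ respectively, giving a commutative square with vertical isomorphisms on both sides, which proves the equivalence. There is no substantive obstacle here; the only subtlety is the nonzero scalar in the vacuum-to-vacuum identification, which is harmless because the rank-level duality map is in any case only defined up to a constant.
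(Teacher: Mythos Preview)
Your proof is correct and is precisely the direct argument the paper alludes to but does not spell out: the paper simply states that ``the rank-level duality map commutes with the propagation of vacua'' and that the proposition ``has a direct proof,'' and your commutative square with the vacuum-to-vacuum identification at the extra point $Q$ is exactly that direct proof.
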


\subsubsection{Diagram automorphisms and rank-level duality} Let $G$ be a complex simply connected group with Lie algebra $\frg$. The center $Z(G)$ acts on the affine Lie algebra $\widehat{\frg}$ as diagram automorphisms. The action of the center $Z(G)$ preserves the Cartan subalgebra of $\widehat{\frg}$ and hence it also acts on $P_{\ell}(\frg)$. Consider the set $\Gamma(G)=\{(\sigma_1,\dots, \sigma_n)\in Z(G)^n| \prod_{i=1}^n \sigma_i=\operatorname{id}\}$. For $\vec{\sigma} \in \Gamma(G)$, we denote by $\vec{\sigma}\vec{\lambda}$, the $n$ tuple $(\sigma_1.\lambda_1,\dots,\sigma_n.\lambda_n)$, where $\sigma.\lambda$ is a level $\ell$ weight of $\frg$ given by the action of diagram automorphism $\sigma$ on a weight $\lambda \in P_{\ell}(\frg)$. The following is one of the main results in ~\cite{FS}.

\begin{proposition}\label{diaaut1}
Let $\mathfrak{X}$ be the data associated to $n$ distinct points on $\mathbb{P}^1$ with chosen coordinates. Then, there is an isomorphism 
$$\Theta_{\vec{\sigma}}(\mathfrak{X}) : \mathcal{V}_{\vec{\lambda}}(\mathfrak{X}, \frg, \ell)\rightarrow \mathcal{V}_{\vec{\sigma}\vec{\lambda}}(\mathfrak{X}, \frg, \ell).$$ More over the isomorphism is flat with respect to the KZ/Hitchin/WZW connection.
\end{proposition}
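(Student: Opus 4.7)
The plan is to realize the isomorphism $\Theta$ by combining a local spectral-flow intertwiner at each puncture with a single global gauge transformation on $\mathbb{P}^1$, following the strategy of Fuchs--Schweigert~\cite{FS}. First I would lift each $\sigma_i \in Z(G) = P^\vee/Q^\vee$ to a coweight $\omega_i^\vee \in P^\vee$. The hypothesis $\prod_i \sigma_i = \operatorname{id}$ translates into $\sum_i \omega_i^\vee \in Q^\vee$, and by adjusting one of the lifts within its $Q^\vee$-coset (which does not alter the class in $Z(G)$) I can arrange $\sum_i \omega_i^\vee = 0$ exactly in $P^\vee$. Each $\omega_i^\vee$ determines a spectral-flow automorphism $\tau_i$ of $\widehat{\frg}$ which on the loop part sends $X_\alpha \otimes \xi_i^n$ to $X_\alpha \otimes \xi_i^{n - \langle \alpha, \omega_i^\vee\rangle}$, together with the standard correction on the central element. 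Under $\tau_i$, the level-$\ell$ integrable module $\mathcal{H}_{\lambda_i}$ is carried isomorphically onto $\mathcal{H}_{\sigma_i\lambda_i}$, yielding a vector-space intertwiner $\theta_i : \mathcal{H}_{\lambda_i} \to \mathcal{H}_{\sigma_i\lambda_i}$. I would set $\theta := \bigotimes_i \theta_i$ and prove it descends to coinvariants.

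Next I would globalize. The product $\Phi(z) := \prod_i (z - z_i)^{\omega_i^\vee}$ defines a morphism $\mathbb{P}^1 \setminus \{P_1, \dots, P_n\} \to T^{\mathrm{ad}}$ (single-valued because the adjoint torus has cocharacter lattice $P^\vee$), and since $\sum_i \omega_i^\vee = 0$ it is also regular at $\infty$. Consequently, $\operatorname{Ad}(\Phi)$ preserves the block algebra $\frg(\mathfrak{X}) = \frg \otimes \Gamma(\mathbb{P}^1 \setminus \{P_1, \dots, P_n\}, \mathcal{O})$: for $X_\alpha \otimes f$ in the block algebra, the image $\Phi^\alpha \cdot f \otimes X_\alpha$ has poles only at the $P_i$ and remains regular at $\infty$. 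Expanding $\Phi^\alpha$ at $P_i$ gives $c \cdot \xi_i^{\langle \alpha, \omega_i^\vee \rangle} (1 + O(\xi_i))$ for a nonzero constant $c$, so the local action of $\operatorname{Ad}(\Phi)$ at $P_i$ matches $\tau_i$ up to a regular gauge transformation, which acts trivially on coinvariants by the gauge condition of Remark~\ref{gauge}. Thus $\theta$ intertwines the two $\frg(\mathfrak{X})$-actions and descends to the desired map $\Theta : \mathcal{V}_{\vec{\lambda}}(\mathfrak{X}, \frg, \ell) \to \mathcal{V}_{\vec{\sigma}\vec{\lambda}}(\mathfrak{X}, \frg, \ell)$; running the same construction with $\sigma_i^{-1}$ produces an inverse, so $\Theta$ is an isomorphism.

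For flatness under the KZ/Hitchin/WZW connection, one varies the $z_i$ and checks that the derivative $\partial_{z_i}\Phi$ contributes exactly the term needed to intertwine the Sugawara operator governing the $z_i$-dependence at the $i$-th puncture; since $\Phi$ is explicit, this reduces to a direct residue calculation. The main obstacle I anticipate is the bookkeeping of central-term corrections when lifting each spectral flow to $\widehat{\frg}$ and verifying that the local expansion of $\operatorname{Ad}(\Phi)$ at each puncture matches $\tau_i$ on the nose rather than only up to an inner automorphism. This combinatorial check is essentially the content of~\cite{FS} specialized to genus zero, and the hardest part is carrying it out uniformly in the $n$ punctures while keeping sign and level conventions consistent across all the local intertwiners.
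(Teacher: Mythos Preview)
The paper does not supply its own proof of this proposition: it is stated verbatim as ``one of the main results in~\cite{FS}'' and left at that. Your sketch is a faithful outline of the Fuchs--Schweigert spectral-flow argument the paper is citing, so there is nothing to compare against and no gap to flag.
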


The isomorphism $\Theta_{\vec{\sigma}}(\mathfrak{X})$ in ~\cite{FS} depends on the choice of formal coordinates around the marked points and have the following functorial property under embeddings of Lie algebras. Let $G_1$, $G_2$ and $G$ be simply connected Lie groups with simple Lie algebras $\frg_1$, $\frg_2$ and $\frg$ respectively. Consider a map $\phi: G_1\times G_2 \rightarrow G$ and let $d\phi: \frg_1\oplus \frg_2 \rightarrow \frg$ be the map of Lie algebras. For any simply connected, simple Lie group $G$, consider $$\Gamma(G)=\{(\sigma_1,\dots,\sigma_n) \in Z(G)^n|\prod_{i=1}^n\sigma_i=\operatorname{id}\}.$$ We only give an outline of the proof of the following proposition. For a complete proof, see ~\cite{M1}. We assume that the dimension of $\mathcal{V}_{\vec{\Lambda}}(\mathfrak{X},\frg,1)$ is one. 
\begin{proposition}\label{diaaut2} Let $\vec\Sigma \in \Gamma (G)$, $\vec{\sigma}\in \Gamma(G_1)$ be such that $\phi(\vec{\sigma})=\vec{\Sigma}$, then the pairing $$\mathcal{V}_{\vec{\lambda}}(\mathfrak{X}, \frg_1, \ell_1)\otimes  \mathcal{V}_{\vec{\mu}}(\mathfrak{X},\frg_2, \ell_2)\rightarrow \mathcal{V}_{\vec{\Lambda}}(\mathfrak{X},\frg, 1)$$ is non-degenerate if and only if the following pairing is non-degenerate: $$\mathcal{V}_{\vec{\sigma} \vec{\lambda}}(\mathfrak{X}, \frg_1, \ell_1)\otimes \mathcal{V}_{\vec{\mu}}(\mathfrak{X},\frg_2, \ell_2) \rightarrow \mathcal{V}_{\vec{\Sigma}\vec{\Lambda}}(\mathfrak{X},\frg, 1).$$ 
\end{proposition}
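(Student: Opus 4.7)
The plan is to deduce the proposition from Proposition~\ref{diaaut1} by showing that the diagram automorphism isomorphisms intertwine the rank-level duality pairings. Concretely, I would assemble a commutative diagram
$$
\begin{CD}
\mathcal{V}_{\vec{\lambda}}(\mathfrak{X},\frg_1,\ell_1)\otimes \mathcal{V}^{\dagger}_{\vec{\mu}}(\mathfrak{X},\frg_2,\ell_2) @>>> \mathcal{V}^{\dagger}_{\vec{\Lambda}}(\mathfrak{X},\frg,1) \\
@V\Theta\otimes \operatorname{id}VV @VV\Theta V \\
\mathcal{V}_{\vec{\sigma}\vec{\lambda}}(\mathfrak{X},\frg_1,\ell_1)\otimes \mathcal{V}^{\dagger}_{\vec{\mu}}(\mathfrak{X},\frg_2,\ell_2) @>>> \mathcal{V}^{\dagger}_{\vec{\Sigma}\vec{\Lambda}}(\mathfrak{X},\frg,1)
\end{CD}
$$
whose left vertical arrow is Proposition~\ref{diaaut1} applied to $\frg_1$ with $\vec{\sigma}$ (together with the identity on the $\frg_2$-factor), and whose right vertical arrow is Proposition~\ref{diaaut1} applied to $\frg$ with $\vec{\Sigma}$ (suitably dualized). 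Both verticals are isomorphisms, so once commutativity is established, non-degeneracy of the top pairing is equivalent to non-degeneracy of the bottom pairing, which is exactly the content of the proposition.

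To prove commutativity, I would lift the picture to the integrable highest weight modules. Following the construction recalled in \cite{FS} and \cite{M1}, the isomorphism $\Theta$ is induced by an explicit linear isomorphism $\widetilde{\Theta}_{\vec{\sigma}}:\mathcal{H}_{\vec{\lambda}}(\frg_1)\to \mathcal{H}_{\vec{\sigma}\vec{\lambda}}(\frg_1)$, and analogously $\widetilde{\Theta}_{\vec{\Sigma}}$ on $\mathcal{H}_{\vec{\Lambda}}(\frg)$, realized via the action of a lift of the relevant center element combined with a gauge transformation having prescribed local behavior at the marked points. The crucial check is that, under the multiplicity-one inclusion $\mathcal{H}_{\vec{\lambda}}(\frg_1)\otimes \mathcal{H}_{\vec{\mu}}(\frg_2)\hookrightarrow \mathcal{H}_{\vec{\Lambda}}(\frg)$, the restriction of $\widetilde{\Theta}_{\vec{\Sigma}}$ agrees, up to a nonzero scalar, with $\widetilde{\Theta}_{\vec{\sigma}}\otimes \operatorname{id}$ followed by the analogous inclusion for the twisted weights. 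The hypothesis $\phi(\vec{\sigma})=\vec{\Sigma}$ is used precisely here: the center element acting on $\mathcal{H}_{\vec{\Lambda}}(\frg)$ is the image under $\phi$ of the one acting on $\mathcal{H}_{\vec{\lambda}}(\frg_1)$, so its restriction to the $(\vec{\lambda},\vec{\mu})$ branching component is forced to coincide with $\widetilde{\Theta}_{\vec{\sigma}}\otimes \operatorname{id}$.

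Since these $\widetilde{\Theta}$'s commute with the action of the block algebra $\frg(\mathfrak{X})$ (the gauge transformation lives on the formal disks, while $\frg(\mathfrak{X})$ is given by functions regular on the complement of the marked points), they descend to $\Theta$ on coinvariants, yielding the commutative square above. The main obstacle I anticipate is carefully tracking signs and normalizations in the explicit construction of $\widetilde{\Theta}$, since the center action on level-$1$ modules involves sign conventions that can differ between $\frg_1$ and $\frg$. However, because the conclusion is purely qualitative (equivalence of non-degeneracy), the intertwining need only be verified up to a nonzero scalar, and this scalar ambiguity should be pinned down by the uniqueness (up to scalar) of the multiplicity-one embedding $\mathcal{H}_{\vec{\sigma}\vec{\lambda}}(\frg_1)\otimes \mathcal{H}_{\vec{\mu}}(\frg_2)\hookrightarrow \mathcal{H}_{\vec{\Sigma}\vec{\Lambda}}(\frg)$.
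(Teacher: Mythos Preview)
The paper does not actually supply a proof of this proposition; it simply refers the reader to \cite{M1}. There is therefore no proof in the present paper to compare your proposal against.

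That said, your strategy --- assemble a commutative square with the Fuchs--Schweigert isomorphisms $\Theta$ as vertical arrows and the rank-level pairings as horizontal arrows, lift to highest-weight modules, and verify that $\widetilde{\Theta}_{\vec{\Sigma}}$ restricts (up to a nonzero scalar) to $\widetilde{\Theta}_{\vec{\sigma}}\otimes \operatorname{id}$ on the branching component --- is the natural one and is almost certainly what \cite{M1} carries out. Two points in your sketch deserve sharpening. First, the multi-shift automorphisms underlying $\widetilde{\Theta}$ do not literally commute with the block algebra $\frg(\mathfrak{X})$; rather they intertwine $\frg(\mathfrak{X})$ with itself via an affine automorphism, and the passage to coinvariants should be phrased that way. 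Second, and more substantively, the assertion that the outer automorphism of $\widehat{\frg}$ attached to $\Sigma$ restricts under $\widehat{\phi}$ to the automorphism for $\sigma$ on $\widehat{\frg}_1$ times the identity on $\widehat{\frg}_2$ is the real content of the argument: the hypothesis $\phi(\vec{\sigma})=\vec{\Sigma}$ is a statement about group centers, and converting it into compatibility of the associated affine diagram automorphisms under the embedding is a genuine (if routine) verification, not a formality. Once that is done, your multiplicity-one argument does handle the scalar ambiguity, and the proposition follows.
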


\begin{proof}
For every $n$ tuple $\vec{\sigma}=(\sigma_1,\dots,\sigma_i,\dots, \sigma_n)$ (respectively $\vec{\Sigma}$) as in the proposition, $n$ automorphisms of $\widehat{\frg}_1$ (respectively $\widehat{\frg}$) are constructed in \cite{FS}. These automorphisms are known as multi-shift automorphisms. We refer the reader to \cite{FS, M1} for a definition. Multishift automorphisms for $\vec{\sigma}$ (respectively $\vec{\Sigma}$) have the following features:
\begin{itemize}
\item If $\sigma_i$ is non-trivial, then the associated multi-shift automorphism is not an inner automorphism.
\item For $\vec{\sigma}$ (respectively $\vec{\Sigma}$), it gives an automorphism of $\bigoplus_{i=1}^n\frg_1\otimes \mathbb{C}((\xi_i))\oplus \mathbb{C}c$ (respectively for $\bigoplus_{i=1}^n\frg\otimes \mathbb{C}((\xi_i))\oplus \mathbb{C}c$ ).
\item The above automorphism preserves the subalgebra $\frg_1(\mathfrak{X})$ (respectively $\frg(\mathfrak{X})$).
\end{itemize}
In \cite{M1}, with the assumption of Proposition \ref{diaaut2}, it is shown that the multishift automorphisms for $\vec{\sigma}$ commute with multishift automorphisms for $\vec{\Sigma}$ for the map $\widehat{\frg}_1\oplus \widehat{\frg}_2 \rightarrow \widehat{\frg}$. This and the proof of Proposition \ref{diaaut1} (see \cite{FS}) give us the following diagram of conformal blocks which commutes:
$$ 
\xymatrix{
\mathcal{V}_{\vec{\lambda}}(\mathfrak{X}, \frg_1, \ell_1)\otimes  \mathcal{V}_{\vec{\mu}}(\mathfrak{X},\frg_2, \ell_2) \ar[d]^{\Theta_{\vec{\sigma}}(\mathfrak{X})\otimes \operatorname{id}} \ar[r] & \mathcal{V}_{\vec{\Lambda}}(\mathfrak{X},\frg, 1)\ar[d]^{\Theta_{\vec{\Sigma}}(\mathfrak{X})}\\
\mathcal{V}_{\vec{\sigma}\vec{\lambda}}(\mathfrak{X}, \frg_1, \ell_1)\otimes  \mathcal{V}_{\vec{\mu}}(\mathfrak{X},\frg_2, \ell_2)\ar[r] & \mathcal{V}_{\vec{\Sigma}\vec{\Lambda}}(\mathfrak{X},\frg, 1)}
$$ Here the vertical arrows are the isomorphisms constructed in \cite{FS} and the horizontal arrows are rank-level duality maps that come from the conformal embedding $\frg_1\oplus \frg_2\rightarrow \frg$. The proof of the proposition now follows easily.
\end{proof}

\section{Sewing and compatibility under factorization} In this section, we recall the sewing construction from ~\cite{TUY}. We consider a family of curves degenerating to a curve with one node. We study compatibility of rank-level duality map with factorization of a nodal curve following ~\cite{BP}. We will use Proposition \ref{keydegen} to reduce rank-level duality questions on $n$ pointed curves to rank-level duality for certain one dimensional conformal blocks on $\mathbb{P}^1$ with three marked points. Our strategy is inspired by Proposition 5.2 in ~\cite{Pau}. We refer the reader to Section 9 for more details. First we recall the following lemma from ~\cite{TUY}. 
\begin{lemma}\label{funlemma}
There exists a bilinear pairing 
$$(, )_{\lambda}: \mathcal{H}_{\lambda}\times \mathcal{H}_{\lambda^{\dagger}} \rightarrow \mathbb{C} $$ unique up to a multiplicative constant such that 
 $$(X(n)u, v )_{\lambda} + (u, X(-n)v)_{\lambda}=0.$$ for all $X\in \frg$, $n \in \mathbb{Z}$, $u\in \mathcal{H}_{\lambda}$ and $v \in \mathcal{H}_{\lambda^{\dagger}}$. Moreover, the restriction of the form $(,)_{\lambda}$ to $\mathcal{H}_{\lambda}(m)\times \mathcal{H}_{\lambda^{\dagger}}(m')$ is zero if $m\neq m'$ and is non-degenerate if $m=m'$.
\end{lemma}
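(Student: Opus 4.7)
The plan is to realize the desired pairing via the restricted graded dual of $\mathcal{H}_{\lambda^\dagger}$. Set $V^\vee := \bigoplus_m \mathcal{H}_{\lambda^\dagger}(m)^*$ and define a $\widehat\frg$-action on it by
\[
(X(n)\cdot f)(v) := -f(X(-n)v),\qquad c\cdot f := \ell f.
\]
Using $[X(n),Y(m)] = [X,Y](n+m) + n(X,Y)\delta_{n+m,0}c$, the map $X(n)\mapsto -X(-n)$, $c\mapsto c$ is easily verified to be an anti-involution of $\widehat\frg$, so the formula above genuinely defines a level-$\ell$ $\widehat\frg$-module structure on $V^\vee$. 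Its degree-zero piece $V^\vee(0) = V_{\lambda^\dagger}^*$ is the contragredient $\frg$-module of $V_{\lambda^\dagger}$, hence isomorphic to $V_\lambda = V_{-w_0\lambda^\dagger}$; positive modes $X(n)$, $n>0$, shift $L_0$-degree by $-n$ and so annihilate $V^\vee(0)$. A highest weight vector $v_\lambda^*\in V^\vee(0)$ therefore generates a highest-weight $\widehat\frg$-submodule of weight $\lambda$ at level $\ell$.

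The key technical step is to check that this submodule is integrable, so that by the universal property of $\mathcal{H}_\lambda$ the assignment $v_\lambda \mapsto v_\lambda^*$ extends to an injective $\widehat\frg$-homomorphism $\Phi:\mathcal{H}_\lambda\hookrightarrow V^\vee$ (injectivity from simplicity of $\mathcal{H}_\lambda$). Integrability along simple roots of $\frg$ is automatic from the finite-dimensional theory applied to $V^\vee(0)$; for the affine simple root $\alpha_0=\delta-\theta$ it amounts to showing
\[
v_\lambda^*\bigl(X_\theta(1)^{\ell-(\theta,\lambda)+1}v\bigr)=0\qquad\forall\,v\in\mathcal{H}_{\lambda^\dagger},
\]
which I would prove by applying the affine $\sll_2$-triple $(X_\theta(1),X_{-\theta}(-1),H_\theta+c)$ (for the real root $\theta+\delta$) to $\mathcal{H}_{\lambda^\dagger}$: the lowest-weight vector $v_{\min}\in V_{\lambda^\dagger}(0)$ is a highest-weight vector of its $\sll_2$-submodule with $h$-eigenvalue $\ell-(\theta,\lambda)$ (using $-w_0\theta=\theta$ so that $(\theta,\lambda^\dagger)=(\theta,\lambda)$), hence $X_{-\theta}(-1)^{\ell-(\theta,\lambda)+1}v_{\min}=0$; a standard $\sll_2$-Casimir / weight-count argument then converts this into the desired vanishing. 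Once $\Phi$ is established, the sought pairing is defined by $(u,v)_\lambda := \Phi(u)(v)$, and the defining antisymmetry relation holds by construction.

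For uniqueness and the graded structure, any pairing with the given invariance property must also satisfy $(L_0 u,v)_\lambda=(u,L_0 v)_\lambda$ via the Sugawara expression
\[
L_0 = \frac{1}{2(g^*+\ell)}\sum_a\bigl(J_a(0)J^a(0) + 2\sum_{n>0}J_a(-n)J^a(n)\bigr),
\]
shifting each mode one at a time across the form; the ordering is preserved because $\{J_a\},\{J^a\}$ are dual bases with respect to the Killing form. Since $L_0$ has eigenvalues $\Delta_\lambda+m$ on $\mathcal{H}_\lambda(m)$ and $\Delta_{\lambda^\dagger}+m'$ on $\mathcal{H}_{\lambda^\dagger}(m')$, with $\Delta_\lambda=\Delta_{\lambda^\dagger}$ (the Casimir $(\lambda,\lambda+2\rho)$ is $(-w_0)$-invariant because $w_0\rho=-\rho$), the form vanishes on $\mathcal{H}_\lambda(m)\times\mathcal{H}_{\lambda^\dagger}(m')$ for $m\neq m'$. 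On a single graded piece, the invariance relation plus the PBW triangular decomposition of $\widehat\frg$ reduces any pairing value to a linear combination of values on $V_\lambda\times V_{\lambda^\dagger}$, where Schur's lemma fixes the pairing up to a single scalar. Finally, injectivity of $\Phi$ combined with the equality of graded dimensions $\dim\mathcal{H}_\lambda(m)=\dim\mathcal{H}_{\lambda^\dagger}(m)$ (the Weyl--Kac character is invariant under $\lambda\mapsto -w_0\lambda$ after specializing Cartan variables to zero) forces $\Phi$ to be a graded isomorphism, giving non-degeneracy on each $\mathcal{H}_\lambda(m)\times\mathcal{H}_{\lambda^\dagger}(m)$. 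The hard part will be the integrability verification, where the infinite-dimensional nature of $V^\vee$ and the precise $\sll_2$-combinatorics on $\mathcal{H}_{\lambda^\dagger}$ must mesh exactly.
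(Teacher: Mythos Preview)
The paper does not give a proof of this lemma; it merely cites \cite{TUY} and moves on. So there is no ``paper's own proof'' to compare against. Your approach via the restricted graded dual is exactly the standard one that underlies the TUY statement, and your outline is essentially correct.

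That said, you make the integrability verification harder than necessary. Once you have the $\widehat{\frg}$-action on $V^\vee=\bigoplus_m\mathcal{H}_{\lambda^\dagger}(m)^*$, you can bypass the $\sll_2$-combinatorics entirely by showing directly that $V^\vee$ is \emph{irreducible}: if $0\neq W\subset V^\vee$ is a submodule, then $W^\perp=\{v\in\mathcal{H}_{\lambda^\dagger}:f(v)=0\ \forall f\in W\}$ is a $\widehat{\frg}$-submodule of $\mathcal{H}_{\lambda^\dagger}$ (immediate from the contravariance relation), proper because $W\neq 0$, hence zero by irreducibility of $\mathcal{H}_{\lambda^\dagger}$; since $W$ is graded (it is $L_0$-stable via Sugawara) and separates points on each finite-dimensional piece, $W=V^\vee$. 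Thus $V^\vee$ is the irreducible highest-weight module of highest weight $\lambda$ at level $\ell$, which is $\mathcal{H}_\lambda$ by definition. This gives the isomorphism $\Phi:\mathcal{H}_\lambda\xrightarrow{\sim}V^\vee$ and hence existence and non-degeneracy of the pairing in one stroke, without ever checking the affine Serre relation by hand. Your $\sll_2$ argument can be made to work, but it needs an auxiliary orthogonality (e.g.\ the contravariant Hermitian form on $\mathcal{H}_{\lambda^\dagger}$) to conclude that the image of $X_\theta(1)^N$ has zero component along $v_{\min}$ in the \emph{weight} decomposition, not just in the $\sll_2$-isotypic decomposition; as written that step is slightly incomplete. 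The rest of your argument---$L_0$-selfadjointness from Sugawara, vanishing off the graded diagonal, and uniqueness via reduction to $V_\lambda\times V_{\lambda^\dagger}$ and Schur---is fine.
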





 
 Since the restriction of the bilinear form $(,)_{\lambda}$ to $\mathcal{H}_{\lambda}(m)\times\mathcal{H}_{\lambda^{\dagger}}(m)$ is non-degenerate, we obtain an isomorphism of $\mathcal{H}_{\lambda^{\dagger}}(m)$ with $\mathcal{H}_{\lambda}(m)^*$. Let $\gamma_{\lambda}(m)$ be the distinguished element of $\mathcal{H}_{\lambda}(m)\otimes \mathcal{H}_{\lambda^{\dagger}}(m)$ given by $(,)_{\lambda}$. Let $t$ be a formal variable. Given $\lambda \in P_{\ell}(\frg)$, we construct an element $\widetilde{\gamma}_{\lambda}=\sum_{m=0}^{\infty}\gamma_{\lambda}(m)t^m$ of $\mathcal{H}_{\lambda}\otimes \mathcal{H}_{\lambda^{\dagger}}[[t]]$.

We are now ready to describe the sewing procedure in ~\cite{TUY}. Throughout the section, let $\mathcal{B}=\operatorname{Spec}\mathbb{C}[[t]]$. We consider a family of curves $\mathcal{X} \rightarrow \mathcal{B}$ with $n$ marked points with chosen coordinates such that it's special fiber $\mathcal{X}_0$ is a curve $X_0$ over $\mathbb{C}$ with exactly one node and it's generic fiber $\mathcal{X}_t$ is a smooth curve. Consider the sheaf of conformal blocks $\mathcal{V}^{\dagger}_{\vec{\lambda}}(\mathcal{X}, \frg)$ for the family of curves $\mathcal{X}$. The sheaf of conformal blocks commutes with base change and the fiber over any point $t \in \mathcal{B}$ coincides with $\mathcal{V}_{\vec{\lambda}}^{\dagger}(\mathfrak{X}_t, \frg)$, where $\mathfrak{X}_t$ is the data associated to the curve $X_t$ over the point $t\in \mathcal{B}$.

Let $\widetilde{X}_0$ be the normalization of $X_0$. For $\lambda \in P_{\ell}(\frg)$, the following isomorphism is constructed in ~\cite{TUY}
$$\oplus \iota_{\lambda}: \bigoplus_{\lambda \in P_{\ell}(\frg)} \mathcal{V}_{\lambda,\lambda^{\dagger}, \vec{\lambda}}^{\dagger}(\widetilde{\mathfrak{X}}, \frg)\rightarrow \mathcal{V}_{\vec{\lambda}}^{\dagger}(\mathfrak{X}, \frg),$$ where $\widetilde{\mathfrak{X}}$ is the data associated to the $(n+2)$ points of the smooth pointed curve $\widetilde{X}_0$ with chosen coordinates. 

In ~\cite{TUY}, a sheaf version of the above isomorphism is also proved. We briefly recall the construction. For every $\lambda \in P_{\ell}(\frg)$ there exists a map 
$$s_{\lambda} :\mathcal{V}_{\lambda,\lambda^{\dagger}, \vec{\lambda}}^{\dagger}(\widetilde{\mathfrak{X}}, \frg)\rightarrow \mathcal{V}_{\vec{\lambda}}^{\dagger}(\mathcal{X}, \frg),$$ where $s_{\lambda}(\psi)=\widetilde{\psi}$ and $\widetilde{\psi}(\widetilde{u}):=\psi(\widetilde{u}\otimes \widetilde{\gamma}_{\lambda})\in \mathbb{C}[[t]]$ for any $\widetilde{u}\in \mathcal{H}_{\vec{\lambda}}[[t]]$. This map extends to a map $s_{\lambda}(t)$ of coherent sheaves of $\mathbb{C}[[t]]$-modules
$$s_{\lambda}(t) : \mathcal{V}_{\lambda,\lambda^{\dagger}, \vec{\lambda}}^{\dagger}(\widetilde{\mathfrak{X}}, \frg)\otimes \mathbb{C}[[t]]\rightarrow \mathcal{V}_{\vec{\lambda}}^{\dagger}(\mathcal{X}, \frg).$$
 With the above notation, the following is proved in ~\cite{TUY}. We also refer the reader to Theorem 6.1 in ~\cite{Lo1}.
\begin{proposition}
The map $$\oplus s_{\lambda}(t): \bigoplus_{\lambda \in P_{\ell}(\frg)} \mathcal{V}_{\lambda,\lambda^{\dagger}, \vec{\lambda}}^{\dagger}(\widetilde{\mathfrak{X}}, \frg)\otimes \mathbb{C}[[t]]\rightarrow \mathcal{V}_{\vec{\lambda}}^{\dagger}(\mathcal{X}, \frg).$$ is an isomorphism of locally free sheaves on $\mathcal{B}$. 
\end{proposition}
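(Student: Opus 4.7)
\emph{Proof proposal.} The plan is to check that the maps $s_{\lambda}(t)$ for $\lambda\in P_{\ell}(\frg)$ are well-defined morphisms of coherent $\mathbb{C}[[t]]$-modules, and then to promote the factorization isomorphism $\oplus \iota_{\lambda}$ at the closed point $t=0$ to an isomorphism over a neighborhood of $t=0$ using the local freeness of the sheaf of conformal blocks together with Nakayama's lemma. Well-definedness has two parts. First, for any $\widetilde{u}\in \mathcal{H}_{\vec{\lambda}}$ the quantity $\psi(\widetilde{u}\otimes \widetilde{\gamma}_{\lambda})=\sum_{m\geq 0}\psi(\widetilde{u}\otimes \gamma_{\lambda}(m))\, t^{m}$ is an element of $\mathbb{C}[[t]]$ since each $\gamma_{\lambda}(m)$ sits in a finite-dimensional graded piece, and the assignment extends $\mathbb{C}[[t]]$-linearly to $\mathcal{H}_{\vec{\lambda}}[[t]]$. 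Second, and more substantively, one must verify that $\widetilde{\psi}$ annihilates $\frg(\mathcal{X})\cdot \mathcal{H}_{\vec{\lambda}}[[t]]$ so that it descends to a section of $\mathcal{V}^{\dagger}_{\vec{\lambda}}(\mathcal{X},\frg)$.

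The gauge-invariance verification is the main obstacle. Given $X\otimes f$ with $f\in \Gamma(\mathcal{C}\setminus \bigcup_{i} s_{i}(\mathcal{B}),\mathcal{O}_{\mathcal{C}})$, one works in a sewing chart around the node where local coordinates $(x,y)$ satisfy $xy=t$. The function $f$ admits Laurent expansions on the two branches near the node, and the relation $xy=t$ forces a precise compatibility between the principal parts at the two new marked points of the normalization modulo each power of $t$. Combining this compatibility with the defining identity $(X(n)u,v)_{\lambda}+(u,X(-n)v)_{\lambda}=0$ of Lemma \ref{funlemma}, the two contributions produced by $X\otimes f$ at the two new marked points cancel in each coefficient of $t^{N}$ modulo a residual contribution that is precisely a gauge relation for $\psi$ on the normalized curve $\widetilde{X}_{0}$ against a function on $\widetilde{X}_{0}\setminus\{P_{1},\dots,P_{n},P_{+},P_{-}\}$, which vanishes because $\psi\in \mathcal{V}^{\dagger}_{\lambda,\lambda^{\dagger},\vec{\lambda}}(\widetilde{\mathfrak{X}},\frg)$. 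This $t$-adic bookkeeping is essentially the Tsuchiya--Ueno--Yamada sewing argument and is where the real work lies.

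Once well-definedness is established the conclusion is a formal consequence. The target $\mathcal{V}^{\dagger}_{\vec{\lambda}}(\mathcal{X},\frg)$ is locally free over $\mathcal{B}=\operatorname{Spec}\mathbb{C}[[t]]$ by the local freeness theorem of \cite{TUY} recalled earlier, and the source $\bigoplus_{\lambda}\mathcal{V}^{\dagger}_{\lambda,\lambda^{\dagger},\vec{\lambda}}(\widetilde{\mathfrak{X}},\frg)\otimes \mathbb{C}[[t]]$ is free over $\mathbb{C}[[t]]$ of rank $\sum_{\lambda}\dim \mathcal{V}^{\dagger}_{\lambda,\lambda^{\dagger},\vec{\lambda}}(\widetilde{\mathfrak{X}},\frg)$. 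Reducing $\oplus s_{\lambda}(t)$ modulo $t$ recovers the factorization isomorphism $\oplus \iota_{\lambda}$; in particular the reduction is surjective, and the two $\mathbb{C}$-vector spaces involved have the same dimension. Nakayama's lemma then forces $\oplus s_{\lambda}(t)$ to be surjective as a map of $\mathbb{C}[[t]]$-modules, and since both sides are free of the same finite rank over $\mathbb{C}[[t]]$ any such surjection is an isomorphism, which is exactly the assertion of the proposition.
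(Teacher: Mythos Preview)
The paper does not give a proof of this proposition at all; it simply attributes the result to \cite{TUY} and states it for later use. Your sketch is essentially the Tsuchiya--Ueno--Yamada argument: verify gauge invariance of $\widetilde{\psi}$ via the sewing relation $xy=t$ and the contragredience identity of Lemma~\ref{funlemma}, then invoke local freeness of the sheaf of conformal blocks together with the factorization isomorphism at $t=0$ and Nakayama to conclude. There is nothing to compare on the paper's side, and your outline is correct as a summary of the cited proof.

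One small point worth tightening if you ever write this out in full: the claim that the reduction of $\oplus s_{\lambda}(t)$ modulo $t$ \emph{is} the factorization map $\oplus\iota_{\lambda}$ requires a word of justification. Setting $t=0$ in $\widetilde{\gamma}_{\lambda}$ leaves only $\gamma_{\lambda}(0)\in V_{\lambda}\otimes V_{\lambda^{\dagger}}$, whereas the factorization isomorphism on the nodal fiber is a priori defined by inserting the full element $\sum_{m}\gamma_{\lambda}(m)$. In \cite{TUY} one checks that on the nodal curve the higher pieces $\gamma_{\lambda}(m)$ for $m>0$ contribute zero after passing to covacua (this is again a gauge argument, using that at the node the two local parameters are independent once $t=0$), so the two maps agree. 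With that identification in hand your Nakayama step goes through exactly as stated.
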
 
 \subsection{Factorization and compatibility of rank-level duality}
 Consider a conformal embedding $\mathfrak{s}\rightarrow \frg$. Assume that all level one highest weight integrable modules of $\widehat{\frg}$ decompose with multiplicity one as $\widehat{\mathfrak{s}}$-modules.
 
 Let $\vec{\lambda}=(\lambda_1, \dots, \lambda_n)$ be an $n$ tuple of level one weights of $\frg$ and $\vec{{\mu}}\in B(\vec{\lambda})$. We get a map $\mathcal{H}_{\vec{{\mu}}}(\mathfrak{s}) \rightarrow \mathcal{H}_{\vec{\lambda}}(\mathfrak{g}).$ As discussed in Section 3, we also get a $\mathbb{C}[[t]]$-linear map 
 $$\alpha(t) :\mathcal{V}_{\vec{\lambda}}^{\dagger}(\mathcal{X}, \frg) \rightarrow \mathcal{V}_{\vec{\mu}}^{\dagger}(\mathcal{X}, \mathfrak{s}).$$ For ${\mu} \in B(\lambda)$, we denote by $\alpha_{\lambda, \mu}$ the map induced from branching as discussed in Section 3 
 $$\mathcal{V}^{\dagger}_{\lambda, \lambda^{\dagger},\vec{\lambda}}(\widetilde{X}_0, \frg) \rightarrow \mathcal{V}^{\dagger}_{\mu, \mu^{\dagger}, \vec{\mu}}(\widetilde{X}_0, \mathfrak{s})$$ and the extension of $\alpha_{\lambda, \mu}$ to a $\mathbb{C}[[t]]$-linear map is denoted as follows:
 $$\alpha_{\lambda, \mu}(t): \mathcal{V}^{\dagger}_{\lambda, \lambda^{\dagger},\vec{\lambda}}(\widetilde{X}_0, \frg) \otimes \mathbb{C}[[t]]\rightarrow \mathcal{V}^{\dagger}_{\mu, \mu^{\dagger}, \vec{\mu}}(\widetilde{X}_0, \mathfrak{s})\otimes \mathbb{C}[[t]].$$
 
The following proposition from \cite{BP} describes how $\alpha(t)$ decomposes under factorization. 
\begin{proposition}\label{keydegen}On $\mathcal{B}$, we have 

$$\alpha(t) \circ s_{\lambda}(t)=\sum_{{\mu} \in B(\lambda)}t^{n_{\mu}}s_{\mu}(t)\circ\alpha_{\lambda,\mu}(t),$$ where $n_{\mu}$ are positive integers given by the formula:
 $$ n_{\mu}=\Delta_{\mu}-\Delta_{\lambda}.$$
\end{proposition}

\begin{remark}
The integers $n_{\mu}$ are non-zero if the finite dimensional $\mathfrak{s}$-module $V_{\mu}$ does not appear in the decomposition of the finite dimensional $\frg$-module $V_{\lambda}$.
\end{remark}

\section{Branching Rules for conformal embedding of orthogonal Lie algebras}\label{branching2}In this section, we discuss the branching rule for the conformal embedding $\mathfrak{so}(2r+1)\oplus \mathfrak{so}(2s+1)\rightarrow \mathfrak{so}((2r+1)(2s+1))$.

\subsection{Representation of $\mathfrak{so}(2r+1)$}Let $E_{i,j}$ be a matrix whose $(i,j)$-th entry is one and all other entries are zero. The Cartan subalgebra $\mathfrak{h}$ of $\mathfrak{so}(2r+1)$ is generated by diagonal matrices of the form $H_i=E_{i,i}-E_{r+i, r+i}$ for $1\leq i \leq r$. Let $L_i \in \mathfrak{h}^*$ be defined by $L_i(H_j)=\delta_{i,j}$. The normalized Cartan killing form on $\mathfrak{h}$ is given by $(H_i, H_j)=\delta_{ij}$. Under the identification of $\mathfrak{h}^*$ with $\mathfrak{h}$ using the Cartan Killing form, the image of $L_i$ is $H_i$ for all $1\leq i \leq r$.

We can choose the simple positive roots of $\mathfrak{so}(2r+1)$ to be $\alpha_1=L_1-L_2, \alpha_2=L_2-L_3, \dots, \alpha_{r-1}=L_{r-1}-L_r, \alpha_r= L_r$. The highest root is $\theta=L_1+ L_2 =\alpha_1 + 2\alpha_2 + \dots + 2\alpha_r$. The fundamental weights of $B_r$ are $\omega_i=L_1 + L_2 +\dots + L_i$ for $1\leq i < r$ and $\omega_r=\frac{1}{2}(L_1 + L_2 +\dots + L_r)$. 

The dominant integral weights, $P_{+}$, of $\mathfrak{so}(2r+1)$ can be written as 
$$P_{+}= P_{+}^0 \sqcup P_{+}^1,$$ where $P^0_{+}$ is the set of dominant weights $\lambda=\sum_{i=1}^r a_i\omega_i$ such that $a_r$ is even and $P_{+}^1:=P_{+}^0 + \omega_r$. Let $\mathcal{Y}_r$ be the set of Young diagrams with at most $r$ rows and $\mathcal{Y}_{r,s}$ denote the set of Young diagrams with at most $r$ rows and $s$ columns. Then, the set $P_{+}^0$ is in bijection with $\mathcal{Y}_r$.

 Combinatorially any dominant weight $\lambda$ of $P_{+}$ can be written as $Y+ t\omega_r$, where $t=\{0,1\}$ and $Y\in \mathcal{Y}_r$. If $t=0$, then $\lambda \in P_{+}^0$ and if $t=1$, then $\lambda \in P_{+}^1$.

Let $\lambda=\sum_{i=1}^ra_i\omega_i$ be a dominant integral weight. Then, $$(\theta , \lambda)=a_1 + 2(a_2 + \dots + a_{r-1})+ a_r.$$ The set of level $2s+1$ dominant weights are described below: $$P_{2s+1}(\mathfrak{so}(2r+1))=\{\lambda \in P_{+} | a_1 + 2(a_2 + \dots + a_{r-1})+ a_r \leq 2s+1\}.$$ 

\subsection{The action of center on weights}
An element $\sigma$ of the center of the group $\operatorname{Spin}(2r+1)$ acts as an outer automorphism on affine Lie algebra $\widehat{\mathfrak{so}}(2r+1)$. For details, we refer the reader to ~\cite{H}. The action of $\sigma$ on $P_{2s+1}(\mathfrak{so}(2r+1))$ is given by $\sigma(\lambda)=(2s+1-(a_1 +2(a_2+\dots + a_{r-1})+a_r))\omega_1 + a_2\omega_2 + \dots + a_r\omega_r$. 
We denote the intersection $P_{+}^0\cap P_{2s+1}(\mathfrak{so}(2r+1))$ by $P_{2s+1}^0(\mathfrak{so}(2r+1))$. The following lemma can be proved by direct calculation:
\begin{lemma}
The action of $\sigma$ preserves the set $P_{2s+1}^0(\mathfrak{so}(2r+1))$. Moreover, we get $P_{2s+1}^0(\mathfrak{so}(2r+1))=\mathcal{Y}_{r,s}\sqcup \sigma(\mathcal{Y}_{r,s})$.
\end{lemma}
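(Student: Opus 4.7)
The plan is to verify both assertions by direct calculation with the formula for $\sigma$. Writing $\lambda = \sum_{i=1}^r a_i\omega_i$, the map $\sigma$ replaces $a_1$ by $a_1' := 2s+1 - (a_1 + 2(a_2+\cdots+a_{r-1}) + a_r)$ and leaves the remaining $a_i$ unchanged. For $\lambda \in P_{2s+1}(\mathfrak{so}(2r+1))$ the level inequality is precisely the statement $a_1' \geq 0$, so $\sigma(\lambda)$ is still dominant; the new level equals $2s+1 - a_1 \leq 2s+1$; and since $a_r$ is untouched, the parity condition defining $P_+^0$ is preserved. Thus $\sigma$ maps $P_{2s+1}^0(\mathfrak{so}(2r+1))$ into itself, and an immediate computation gives $\sigma^2 = \operatorname{id}$.

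For the decomposition, I would pass to the row-length coordinates $\lambda_j := \sum_{i=j}^{r-1} a_i + a_r/2$ for $1 \leq j \leq r$, which for $\lambda \in P_+^0$ are nonnegative integers with $\lambda_1 \geq \cdots \geq \lambda_r \geq 0$ and which form the Young diagram attached to $\lambda$. The defining condition of $\mathcal{Y}_{r,s}$ is then exactly $\lambda_1 \leq s$, and a short algebraic simplification yields
\[
\sigma(\lambda)_1 \;=\; a_1' + a_2 + \cdots + a_{r-1} + \tfrac{a_r}{2} \;=\; 2s+1 - \lambda_1.
\]
Since $\lambda \in P_{2s+1}^0$ forces $\lambda_1$ to be an integer in $\{0,1,\ldots,2s+1\}$, the involution $\lambda_1 \mapsto 2s+1 - \lambda_1$ swaps the integer intervals $[0,s]$ and $[s+1,2s+1]$. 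Hence $\sigma$ carries $\mathcal{Y}_{r,s} = \{\lambda \in P_{2s+1}^0 \mid \lambda_1 \leq s\}$ bijectively onto $\{\lambda \in P_{2s+1}^0 \mid \lambda_1 \geq s+1\}$, and these two subsets are disjoint and exhaust $P_{2s+1}^0$, giving the desired equality $P_{2s+1}^0 = \mathcal{Y}_{r,s} \sqcup \sigma(\mathcal{Y}_{r,s})$.

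There is no genuine obstacle here; the lemma is essentially an exercise in translating between the two standard parametrizations of a weight (Dynkin labels versus row lengths). The one point worth highlighting is that the parity condition $a_r \in 2\mathbb{Z}$ is exactly what makes $\lambda_1$ an integer, which together with $2s+1$ being odd is what forces the halves $\{\lambda_1 \leq s\}$ and $\{\lambda_1 \geq s+1\}$ to be truly complementary in $P_{2s+1}^0$ (no weight is fixed by $\sigma$), so the union is disjoint.
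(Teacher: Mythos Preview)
Your proof is correct and is precisely the direct calculation the paper alludes to; the paper states only that the lemma ``can be proved by direct calculation'' and gives no further details, so your argument fills in exactly what was intended. The key observation---that $\sigma(\lambda)_1 = 2s+1-\lambda_1$, together with the integrality of $\lambda_1$ forced by $a_r$ even---is the expected one.
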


Following ~\cite{OW}, we describe the orbits of $P_{2s+1}(\mathfrak{so}(2r+1))$ under the action of the center. Let $\rho=\sum_{i=1}^r\omega_i$ be the Weyl vector. For $\lambda=\sum_{i=1}^r a_i\omega_i$, the weight $\lambda+\rho=\sum_{i=1}^rt_i\omega_i$, where $t_i=a_i+1$. Put $u_i=\sum_{j=i}^{r-1}t_j + \frac{t_r}{2}$ for $1\leq i\leq r$, $u_r=\frac{t_r}{2}$ and $u_{r+1}=0$.

The set $P_{2s+1}(\mathfrak{so}(2r+1))$ is identified with the collection of sets $U=(u_1> u_2 > \dots >u_r >0)$ such that 
\begin{itemize}
\item $u_i \in \frac{1}{2}\mathbb{Z}$,
\item $u_i-u_{i+1}\in \mathbb{Z}$,
\item $u_1+u_2 \leq 2(r+s)$.

\end{itemize}

Let $P^{+}_{2s+1}(\mathfrak{so}(2r+1))$ denote the set of weights in $P_{2s+1}(\mathfrak{so}(2r+1))$ such that $u_i \in \mathbb{Z}$.

Let us set $k=2(r+s)$, and we rewrite the action of the center $\Gamma$ on $P_{2s+1}(\mathfrak{so}(2r+1))$ as exchanging $t_1$ with $t_0=k-t_1-2t_2-\dots -2t_{r-1}-t_r$; or in other words changing $u_1$ with $k-u_1$. We observe that the action of $\Gamma$ preserves $P^+_{2s+1}(\mathfrak{so}(2r+1))$ and $P^0_{2s+1}(\mathfrak{so}(2r+1))$. Then, we can identify the orbits of the action of $\Gamma$ as follows :
$$P_{2s+1}(\mathfrak{so}(2r+1))/\Gamma= \{ U=(u_1,\dots, u_r)| \frac{k}{2}\geq u_1>\dots >u_r>0, u_i\in \frac{1}{2}\mathbb{Z}, u_i-u_{i+1}\in \mathbb{Z}\}$$ and the length of the orbits are given as follows:
\begin{itemize}
\item $|\Gamma(U)|=2$ if $u_1 <\frac{k}{2}$.

\item $|\Gamma(U)|=1$ if $u_1=\frac{k}{2}$.

\end{itemize}

For any number $a$ and a set $U=(u_1>u_2>\dots>u_r)$, we denote by $U-a$ and $a-U$, the sets $\{u_1-a>u_2-a>\dots>u_r-a\}$ and $\{a-u_r>a-u_{r-1}>\dots>a-u_1\}$ respectively. Further, the set $\{1,2,\dots, r+s\}$ is denoted by $[r+s]$. The following two lemmas from ~\cite{OW} give a bijection of orbits. We will use this in Section 7 to show that the source and the target of the rank-level duality maps have the same dimension.

\begin{lemma}\label{orbitbijection}Let $P_{2r+1}(\mathfrak{so}(2s+1))$ denote the weights of $\mathfrak{so}(2s+1)$ of level $2r+1$. Then, there is a bijection between the orbits of $P_{2s+1}^{+}(\mathfrak{so}(2s+1))$ and the orbits of $ P_{2r+1}^{+}(\mathfrak{so}(2s+1))$ given by $$U=(u_1> u_2> \dots >u_r) \rightarrow U^c=(u^c_1>\dots>u^c_s),$$ where $U\subset [r+s]$ of cardinality $r$ and $U^c$ is the complement of $U$ in $[r+s]$.

\end{lemma}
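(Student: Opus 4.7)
The plan is to directly unpack both orbit descriptions and observe that set complementation inside $[r+s]$ is the claimed bijection. First I would verify that $P^{+}_{2s+1}(\mathfrak{so}(2r+1))/\Gamma$ is canonically identified with $\binom{[r+s]}{r}$. From the parameterization already established in Section~\ref{branching2}, an element of $P_{2s+1}(\mathfrak{so}(2r+1))/\Gamma$ is a strictly decreasing tuple $U=(u_1>u_2>\dots>u_r>0)$ with $u_i\in\tfrac{1}{2}\mathbb{Z}$, $u_i-u_{i+1}\in\mathbb{Z}$, and orbit bound $u_1\leq k/2=r+s$. Imposing the $P^{+}$ condition $u_i\in\mathbb{Z}$ together with $u_r>0$ forces $u_r\geq 1$, so $U\subseteq\{1,\dots,r+s\}$ is an $r$-element subset. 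Conversely, any $r$-subset of $[r+s]$ gives a legitimate orbit representative: the weight inequality $u_1+u_2\leq 2(r+s)$ is automatic, since $u_1\leq r+s$ and $u_2<u_1$ yield $u_1+u_2\leq 2(r+s)-1<k$. This is really the only inequality that needs checking.

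Second, the identical argument applied on the other side, with the roles of $r$ and $s$ swapped and the relevant constant $k'=2(s+r)$ coinciding with $k$, identifies $P^{+}_{2r+1}(\mathfrak{so}(2s+1))/\Gamma$ with $\binom{[r+s]}{s}$. Third, the set-theoretic complementation $U\mapsto[r+s]\setminus U$ is an obvious involutive bijection between $\binom{[r+s]}{r}$ and $\binom{[r+s]}{s}$; listing its entries in strictly decreasing order recovers exactly the tuple $U^c=(u^c_1>\dots>u^c_s)$ in the statement.

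There is no real obstacle: the content is purely combinatorial repackaging of the orbit descriptions already proven, and the only substantive point is the one-line check that the weight bound $u_1+u_2\leq 2(r+s)$ is forced by the orbit-representative bound $u_1\leq r+s$.
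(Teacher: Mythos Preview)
Your argument is correct and is the natural combinatorial unpacking of the orbit parameterizations. The paper itself does not supply a proof of this lemma at all: it is simply quoted from \cite{OW}, with the orbit description of $P_{2s+1}(\mathfrak{so}(2r+1))/\Gamma$ stated just beforehand. So what you have written is more than what the paper provides, and your verification that $u_1+u_2\leq 2(r+s)-1$ is exactly the small check needed to see that every $r$-subset of $[r+s]$ genuinely arises from a weight and not merely from the formal orbit set.
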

For $\lambda \in P^0_{2s+1}(\mathfrak{so}(2r+1))$, we write $\lambda + \rho =\sum_{i=1}^r (u_i'-\frac{1}{2})L_i$, where $u_i'$ are all integers. We identify the identify the orbits of $P^0_{2s+1}(\mathfrak{so}(2r+1))$ under $\Gamma$ as subsets $U'=(u_1'>u_2'>\dots > u_r')$ of $[r+s]$.
\begin{lemma}

There is a bijection between the orbits of $P_{2s+1}^{0}(\mathfrak{so}(2s+1))$ and the orbits of $ P_{2r+1}^{0}(\mathfrak{so}(2s+1))$ given by $$U'=(u'_1> u'_2> \dots >u'_r) \rightarrow((r+s)+ 1- U'^c)=(u''_1>\dots>u''_s),$$ where $U'\subset [r+s]$ of cardinality $r$ and $U'^c$ is the complement of $U'$ in $[r+s]$.

\end{lemma}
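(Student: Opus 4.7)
The plan is to run the same orbit parametrization used for Lemma~\ref{orbitbijection} on the $P^0$ pieces of both Lie algebras, and then to exhibit the stated map as a composition of two evident bijections of finite sets.

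First I would pin down the parametrization of $\Gamma$-orbits on $P^0_{2s+1}(\mathfrak{so}(2r+1))$ by $r$-element subsets of $[r+s]$. For $\lambda\in P^0_{2s+1}(\mathfrak{so}(2r+1))$ the last label $a_r$ is even, so $t_r=a_r+1$ is odd and every $u_i$ is a half-integer. Writing $u_i=u_i'-\tfrac12$ with $u_i'\in\mathbb{Z}_{\geq 1}$, the inequalities $u_1>u_2>\dots>u_r>0$ and $u_1+u_2\leq 2(r+s)$ translate to $u_1'>u_2'>\dots>u_r'\geq 1$ and $u_1'+u_2'\leq 2(r+s)+1$. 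The $\Gamma$-action $u_1\mapsto 2(r+s)-u_1$ then becomes $u_1'\mapsto 2(r+s)+1-u_1'$, a fixed-point-free involution on integers, so every orbit has size $2$ and admits a unique representative with $u_1'\leq r+s$. For such a representative $u_2'\leq u_1'-1\leq r+s-1$, so the level constraint is automatic, and the orbits are therefore in bijection with the $r$-element subsets $U'=\{u_1'>\dots>u_r'\}\subset[r+s]$. Exchanging the roles of $r$ and $s$ (and noting that $k=2(r+s)$ is symmetric in them) identifies the orbits of $P^0_{2r+1}(\mathfrak{so}(2s+1))$ with the $s$-element subsets of $[r+s]$.

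Granted these two parametrizations, the proposed map $U'\mapsto (r+s+1)-U'^{c}$ is the composition of the complement map $U'\mapsto U'^{c}$, which is a bijection between $r$-element and $s$-element subsets of $[r+s]$, with the order-reversing involution $v\mapsto r+s+1-v$ of $[r+s]$. Both are bijections, the latter because $1\leq v\leq r+s$ is equivalent to $1\leq r+s+1-v\leq r+s$, and their composition is therefore the required bijection; strict monotonicity $u_1''>\dots>u_s''$ of the output is forced automatically by the fact that the reflection is order-reversing. No serious obstacle is present: the only subtlety is being careful about the normalization of the orbit representative ($u_1'\leq r+s$), which is what guarantees that both $U'$ and its image land inside $[r+s]$, so that the parametrization on each side is actually by subsets of $[r+s]$ of the correct cardinality.
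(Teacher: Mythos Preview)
Your argument is correct. The paper itself does not give a proof of this lemma; it is simply attributed to Oxbury--Wilson~\cite{OW} (see the sentence preceding Lemma~\ref{orbitbijection}: ``The following two Lemmas from~\cite{OW} give a bijection of orbits''). So there is no paper-proof to compare against in detail.

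Your proof supplies exactly what the citation stands in for: you unwind the half-integer parametrization of $P^0$ via $u_i=u_i'-\tfrac12$, observe that the $\Gamma$-involution $u_1'\mapsto 2(r+s)+1-u_1'$ is fixed-point-free on integers (so every orbit has length $2$ and a unique representative with $u_1'\leq r+s$), and check that the remaining constraints force $U'\subset[r+s]$. The map $U'\mapsto (r+s+1)-U'^{c}$ is then visibly a bijection of $r$-subsets to $s$-subsets of $[r+s]$. This is precisely the elementary combinatorial argument one extracts from~\cite{OW}, and matches how the paper uses the lemma later in the dimension count (the sets $U'$ and $(r+s+1)-U'^{c}$ are exactly the index sets appearing in the second half of the rewritten Verlinde sum~(\ref{nasty2}) and in Lemma~\ref{dirty2}).
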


\subsection{Branching rules}\label{branching1}
We now describe the branching rules for the conformal embedding $\mathfrak{so}(2r+1) \oplus \mathfrak{so}(2s+1)\subset \mathfrak{so}((2r+1)(2s+1))$. Let $N=(2r+1)(2s+1)=2d+1$. The level one highest weights of $\widehat{\mathfrak{so}}(N)$ are $0$, $\omega_1$ and $\omega_d$. The following proposition gives the decomposition of level one integrable highest weight modules of weight $0$ and $\omega_1$. We refer the reader to ~\cite{H} for a proof.
\begin{proposition}\label{branch1}
Let $\mathcal{H}_{0}(\mathfrak{so}(N))$ and $\mathcal{H}_{1}(\mathfrak{so}(N))$ denote the highest weight integrable modules of the affine Lie algebra $\widehat{\mathfrak{so}}(2d+1)$ with highest weight $0$ and $\omega_1$ respectively. Then, the module $\mathcal{H}:=\mathcal{H}_{0}(\mathfrak{so}(N))\oplus \mathcal{H}_{1}(\mathfrak{so}(N))$ breaks up as a direct sum of highest weight integrable modules of $\widehat{\mathfrak{so}}(2r+1)\oplus \widehat{\mathfrak{so}}(2s+1)$ of the form:
\begin{itemize}
\item $\mathcal{H}_{\lambda}(\mathfrak{so}(2r+1))\otimes \mathcal{H}_{\lambda^T}(\mathfrak{so}(2s+1))$,

\item $\mathcal{H}_{\lambda}(\mathfrak{so}(2r+1))\otimes \mathcal{H}_{\sigma\lambda^T}(\mathfrak{so}(2s+1))$,

\item $\mathcal{H}_{\sigma \lambda}(\mathfrak{so}(2r+1))\otimes \mathcal{H}_{\lambda^T}(\mathfrak{so}(2s+1))$,

\item $\mathcal{H}_{\sigma \lambda}(\mathfrak{so}(2r+1))\otimes \mathcal{H}_{\sigma\lambda^T}(\mathfrak{so}(2s+1))$,

\end{itemize}
where $\lambda \in \mathcal{Y}_{r,s}$ and $\sigma$ is an automorphism associated to the center of $\operatorname{Spin}(2r+1)$. Moreover, all of the above factors appear with multiplicity one.

\end{proposition}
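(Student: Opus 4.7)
The plan is to realize $\mathcal{H}_{0}(\mathfrak{so}(N)) \oplus \mathcal{H}_{1}(\mathfrak{so}(N))$ as the Neveu--Schwarz fermionic Fock space, and then to apply a version of Howe duality for the commuting pair of affine algebras acting on it. First I would introduce the Clifford algebra $\mathcal{C}$ generated by neutral free fermions $\psi^{a}_{r}$, with $a=1,\dots,N$ and $r\in\mathbb{Z}+\tfrac{1}{2}$, subject to $\{\psi^{a}_{r},\psi^{b}_{s}\}=\delta^{ab}\delta_{r+s,0}$, and let $\mathcal{F}$ be its irreducible Fock module with vacuum annihilated by the positive modes. By the Frenkel--Kac--Peterson/spinor construction of the basic representation, $\mathcal{F}$ is a level-$1$ module of $\widehat{\mathfrak{so}}(N)$, and grading by fermion-number parity identifies $\mathcal{F}$ with $\mathcal{H}_{0}(\mathfrak{so}(N))\oplus \mathcal{H}_{1}(\mathfrak{so}(N))$; the third level-one module $\mathcal{H}_{\omega_d}$ is the Ramond sector and does not enter.

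Second, under the conformal embedding $\mathfrak{so}(2r+1)\oplus \mathfrak{so}(2s+1)\subset \mathfrak{so}((2r+1)(2s+1))$, the defining representation $\mathbb{C}^{N}$ splits canonically as $\mathbb{C}^{2r+1}\otimes\mathbb{C}^{2s+1}$. Reindexing the fermions as $\psi^{i,j}_{r}$, the Fock space $\mathcal{F}$ inherits commuting actions of $\widehat{\mathfrak{so}}(2r+1)$ at level $2s+1$ and $\widehat{\mathfrak{so}}(2s+1)$ at level $2r+1$. Because the embedding is conformal, the Kac--Wakimoto theorem quoted in Section~\ref{branching} guarantees that $\mathcal{F}$ decomposes as a finite direct sum of irreducible modules of the form $\mathcal{H}_{\lambda}(\mathfrak{so}(2r+1))\otimes \mathcal{H}_{\mu}(\mathfrak{so}(2s+1))$, and moreover the Virasoro action factors through a single copy, which severely restricts the admissible pairs $(\lambda,\mu)$.

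Third, to identify the pairs $(\lambda,\mu)$ and obtain multiplicity one, I would compare the normalized graded characters of the two sides. The character of $\mathcal{F}$ is the classical fermionic Cauchy product $\prod_{i,j,r>0}(1+x_i y_j q^{r})(1+x_i^{-1}y_j q^{r})(1+x_i y_j^{-1}q^{r})(1+x_i^{-1}y_j^{-1}q^{r})$, which by the spin-analogue of the Littlewood/Cauchy identity expands as a sum over Young diagrams $\lambda\in\mathcal{Y}_{r,s}$ of products of orthogonal characters in the $x$- and $y$-variables, paired by the transpose $\lambda\leftrightarrow \lambda^{T}$. This is exactly the finite-dimensional dual-pair statement for $(O(2r+1),O(2s+1))$ acting on $\bigwedge^{\bullet}(\mathbb{C}^{2r+1}\otimes\mathbb{C}^{2s+1})$, and matching lowest-weight contributions in each $q$-eigenspace lifts it to the level of affine characters. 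The double-commutant property inside $\mathcal{C}$ then forces multiplicity one.

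Finally, the four families in the statement are accounted for by the action of the center of $\operatorname{Spin}$ on $\mathcal{F}$. Each generator of the center is implemented by a Clifford element (the product of the zero modes of one factor), and acts on $\widehat{\mathfrak{so}}(2r+1)$-weights by the diagram involution $\sigma$; it commutes with the commuting pair up to the twist $(\lambda,\mu)\mapsto(\sigma\lambda,\mu)$ or $(\lambda,\sigma\mu)$. Therefore if $\mathcal{H}_{\lambda}\otimes\mathcal{H}_{\lambda^{T}}$ occurs, so do its three $\sigma$-conjugates, producing the four listed types, while the remaining candidates are excluded by the conformal weight computation. The main obstacle is the combinatorial identification that the pairing of affine weights is precisely transposition of Young diagrams in the $r\times s$ rectangle; this character matching, carried out in \cite{H} via the fermionic Cauchy identity and the classical orthogonal dual pair, is the essential input, and all the other steps follow rather formally from the conformal-embedding framework.
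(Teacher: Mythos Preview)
The paper does not give its own proof of this proposition; it simply refers the reader to \cite{H}. Your outline is essentially Hasegawa's argument in \cite{H}: realize $\mathcal{H}_0\oplus\mathcal{H}_1$ as the NS fermionic Fock space $\bigwedge W_d^{\mathbb{Z}+\frac{1}{2},-}$ (this is exactly the construction the paper recalls in its section on highest weight vectors), set up the commuting actions of $\widehat{\mathfrak{so}}(2r+1)$ and $\widehat{\mathfrak{so}}(2s+1)$ coming from the tensor splitting $W_d=W_r\otimes W_s$, and then identify the branching via a fermionic Cauchy identity and the $(O(2r+1),O(2s+1))$ dual pair. So your approach coincides with the one the paper cites.

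One small correction: your last paragraph implements the involution $\sigma$ by ``the product of the zero modes of one factor,'' but in the Neveu--Schwarz sector the fermion modes lie in $\mathbb{Z}+\tfrac{1}{2}$ and there are no zero modes. In \cite{H} (and in the paper's Proposition describing the highest weight vectors), the four $\sigma$-types do not arise from a zero-mode operator but from the explicit combinatorial operations $\sigma^{L}$, $\sigma^{R}$ on the matrices $\widetilde{Y}(\lambda)$, which flip certain $(-\tfrac{1}{2})$-mode creation operators in the highest weight vectors. The rest of your sketch is accurate.
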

We need to determine which factor in the above decomposition rules comes from $\mathcal{H}_{0}(\mathfrak{so}(N))$ and which factor comes from $\mathcal{H}_{1}(\mathfrak{so}(N))$. The following lemma gives the trace anomaly of the level one weights $0$ and $\omega_1$ of $\widehat{\mathfrak{so}}(2d+1)$. 
\begin{lemma}
$$\Delta_{0}(\mathfrak{so}(N))=0 \ \ \Delta_{\omega_1}(\mathfrak{so}(N))=\frac{1}{2}.$$
\end{lemma}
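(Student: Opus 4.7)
The plan is to compute both trace anomalies directly from the definition
$$\Delta_{\lambda}(\mathfrak{g}) = \frac{(\lambda,\lambda+2\rho)}{2(g^{*}+\ell)},$$
for $\mathfrak{g}=\mathfrak{so}(N)$ with $N=2d+1$ at level $\ell=1$. The first input is the dual Coxeter number: for type $B_d$ we have $g^{*}=N-2=2d-1$, so the denominator equals $2(g^{*}+\ell)=4d$.

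For the highest weight $\lambda=0$ the assertion is immediate, since $(\lambda,\lambda+2\rho)=0$ forces $\Delta_{0}(\mathfrak{so}(N))=0$.

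For $\lambda=\omega_{1}=L_{1}$ I would use the explicit description of the normalized Cartan–Killing form recalled in Section~\ref{branching2}, under which $(L_i,L_j)=\delta_{ij}$. In particular $(\omega_{1},\omega_{1})=(L_{1},L_{1})=1$. To evaluate $(\omega_{1},2\rho)$ I would expand the Weyl vector $\rho=\sum_{i=1}^{d}\omega_{i}$ using the fundamental weights of $B_d$ given in Section~\ref{branching2}, namely $\omega_{i}=L_{1}+\cdots+L_{i}$ for $i<d$ and $\omega_{d}=\tfrac{1}{2}(L_{1}+\cdots+L_{d})$. This gives the coefficient of $L_{k}$ in $2\rho$ equal to $2d-2k+1$; in particular the coefficient of $L_{1}$ is $2d-1$, so $(\omega_{1},2\rho)=2d-1$. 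Therefore
$$\Delta_{\omega_{1}}(\mathfrak{so}(N)) \;=\; \frac{1+(2d-1)}{4d} \;=\; \frac{2d}{4d} \;=\; \frac{1}{2},$$
which is the claimed value.

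This is purely a computational lemma that plugs the standard root/weight data for $B_d$ into the trace anomaly formula; there is no real obstacle beyond keeping track of normalizations. The only point that requires some care is to confirm that the normalization $(\theta,\theta)=2$ used to define the form in Section~\ref{conformalblock} is the same one giving $(L_i,L_j)=\delta_{ij}$ in the $B_d$ realization of Section~\ref{branching2}: one checks this by noting $\theta=L_{1}+L_{2}$, so indeed $(\theta,\theta)=2$ is consistent.
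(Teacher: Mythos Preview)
Your proof is correct. The paper states this lemma without proof, treating it as a routine computation from the definition of the trace anomaly; your direct calculation using the $B_d$ root/weight data and the normalization $(L_i,L_j)=\delta_{ij}$ is exactly what is implicitly expected, and all the numerical details (dual Coxeter number $g^*=2d-1$, coefficient $2d-1$ of $L_1$ in $2\rho$) check out.
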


In order to determine the components, we need to know the trace anomalies for the weight $(\lambda, \lambda^T)$. 
\begin{lemma}
For $\lambda \in \mathcal{Y}_{r,s}$, we have the following equality 
$$\Delta_{\lambda}(\mathfrak{so}(2r+1)) + \Delta_{\lambda^T}(\mathfrak{so}(2s+1))=\frac{1}{2}|\lambda|.$$
\end{lemma}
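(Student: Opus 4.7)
My plan is to directly unpack the definitions and reduce the claim to a standard combinatorial identity on transpose partitions.

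First I would compute the relevant quantities in the denominator of the trace anomalies. The dual Coxeter number of $\mathfrak{so}(2r+1)$ is $g^\ast = 2r-1$, so for the Lie algebra $\mathfrak{so}(2r+1)$ at level $2s+1$ one has $g^\ast + \ell = 2(r+s)$. By symmetry, $\mathfrak{so}(2s+1)$ at level $2r+1$ also has $g^\ast + \ell = 2(r+s)$. Hence the claim becomes
\[
(\lambda,\lambda+2\rho_{B_r}) + (\lambda^T,\lambda^T + 2\rho_{B_s}) \;=\; 2(r+s)\,|\lambda|,
\]
where $\rho_{B_r}$ and $\rho_{B_s}$ are the Weyl vectors of $\mathfrak{so}(2r+1)$ and $\mathfrak{so}(2s+1)$ respectively.

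Next I would expand everything in the orthonormal basis $\{L_i\}$ provided by the normalized Killing form. From the description of positive roots of $B_r$ one finds $\rho_{B_r} = \sum_{i=1}^r (r - i + \tfrac{1}{2}) L_i$, so writing $\lambda = \sum_i \lambda_i L_i$ gives
\[
(\lambda,\lambda+2\rho_{B_r}) = \sum_{i=1}^r \lambda_i^2 \;+\; (2r+1)|\lambda| \;-\; 2\sum_{i=1}^r i\lambda_i.
\]
The analogous formula holds for $\lambda^T$ in $\mathfrak{so}(2s+1)$ with $r$ replaced by $s$ and $\lambda_i$ by $\lambda^T_j$; note $|\lambda^T|=|\lambda|$. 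Adding, the desired identity reduces to
\[
\sum_i \lambda_i^2 + \sum_j (\lambda^T_j)^2 + 2|\lambda| \;=\; 2\sum_i i\lambda_i + 2\sum_j j\lambda^T_j.
\]

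Finally, this is a classical identity about conjugate partitions. Summing over boxes, one has $\sum_i \lambda_i^2 = \sum_{k\geq 1}(2k-1)\lambda^T_k = 2\sum_k k\lambda^T_k - |\lambda|$, and symmetrically $\sum_j (\lambda^T_j)^2 = 2\sum_i i\lambda_i - |\lambda|$. Adding these two identities immediately yields the required equation, completing the proof. There is no real obstacle here; the computation is essentially mechanical once the dual Coxeter numbers and the expression for $\rho_{B_r}$ are written down correctly, and the only substantive input is the elementary identity $\sum_i \lambda_i^2 = \sum_k (2k-1)\lambda^T_k$, which is a direct box-counting argument on the Young diagram of $\lambda$.
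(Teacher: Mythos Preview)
Your proof is correct. The paper states this lemma without proof, so there is no argument to compare against; your direct computation---reducing to the partition identity $\sum_i \lambda_i^2 = \sum_k (2k-1)\lambda^T_k$ after writing out $\rho_{B_r}$ and the common denominator $2(g^*+\ell)=4(r+s)$---is exactly the natural way to fill in the omitted details.
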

\begin{corollary}
Let $\mathcal{H}_{0}(\mathfrak{so}(N))$ denote the level one highest weight integrable $\widehat{\mathfrak{so}}(N)$-module of weight $0$ and $\lambda \in \mathcal{Y}_{r,s}$. Then, the following factors appear in the decomposition of $\mathcal{H}_{0}(\mathfrak{so}(N))$ as $\widehat{\mathfrak{so}}(2r+1)\oplus\widehat{ \mathfrak{so}}(2s+1)$-modules.

\begin{itemize}
\item $\mathcal{H}_{\lambda}(\mathfrak{so}(2r+1))\otimes \mathcal{H}_{\lambda^T}(\mathfrak{so}(2s+1))$, when $|\lambda|\text{ is even}$.
\item $\mathcal{H}_{\sigma\lambda}(\mathfrak{so}(2r+1))\otimes \mathcal{H}_{\sigma \lambda^T}(\mathfrak{so}(2s+1))$, when $|\lambda| \text{ is even}$.
\item $\mathcal{H}_{\sigma \lambda}(\mathfrak{so}(2r+1))\otimes \mathcal{H}_{\lambda^T}(\mathfrak{so}(2s+1))$, when $|\lambda| \text{ is odd}$. 
\item $\mathcal{H}_{\lambda}(\mathfrak{so}(2r+1))\otimes \mathcal{H}_{\sigma \lambda^T}(\mathfrak{so}(2s+1))$, when $|\lambda|\text{ is odd}$.
\end{itemize}
\end{corollary}

\begin{corollary}
Let $\mathcal{H}_{1}(\mathfrak{so}(N))$ denote the level one highest weight integrable $\widehat{\mathfrak{so}}(N)$-module of weight $\omega_1$ and $\lambda \in \mathcal{Y}_{r,s}$. Then, the following factors appears in the decomposition of $\mathcal{H}_{0}(\mathfrak{so}(N))$ as $\widehat{\mathfrak{so}}(2r+1)\oplus\widehat{ \mathfrak{so}}(2s+1)$-modules.

\begin{itemize}
\item $\mathcal{H}_{\lambda}(\mathfrak{so}(2r+1))\otimes \mathcal{H}_{\lambda^T}(\mathfrak{so}(2s+1))$, when $|\lambda|\text{ is odd}$.
\item $\mathcal{H}_{\sigma\lambda}(\mathfrak{so}(2r+1))\otimes \mathcal{H}_{\sigma \lambda^T}(\mathfrak{so}(2s+1))$, when $|\lambda| \text{ is odd}$.
\item $\mathcal{H}_{\sigma \lambda}(\mathfrak{so}(2r+1))\otimes \mathcal{H}_{\lambda^T}(\mathfrak{so}(2s+1))$, when $|\lambda|\text{ is even}$. 
\item $\mathcal{H}_{\lambda}(\mathfrak{so}(2r+1))\otimes \mathcal{H}_{\sigma \lambda^T}(\mathfrak{so}(2s+1))$, when $|\lambda| \text{ is even}$.
\end{itemize}
\end{corollary}
\begin{remark}
 The only components that appear in the decomposition of standard and trivial representations of the finite dimensional Lie algebra $\mathfrak{so}(2d+1)$ into $\mathfrak{so}(2r+1)\oplus\mathfrak{so}(2s+1)$-modules are $\lambda=\omega_1$ and $\lambda=0$. Hence most of the components described in the branching rules described above do not come from finite dimensional branching. This is a major difference in the branching law considered by T. Abe in \cite{A}. 

Further, this is also the main obstruction to construct rank-level duality maps in Theorem \ref{main} geometrically. It is important to study this map geometrically to understand rank-level duality on curves of higher genus. This will be considered in a subsequent project. 
\end{remark}

\section{Rank-level duality map.}In this section, we describe rank-level duality maps using the branching rule. We consider the following weights: 
\begin{itemize}
\item  $\vec{\lambda}_i=(\lambda_{i_1}, \lambda_{i_2}, \dots, \lambda_{i_{n_1}})$ and $\vec{\lambda}_{i}^T=(\lambda_{i_1}^T, \lambda_{i_2}^T, \dots, \lambda_{i_{n_1}}^T)$, where $\lambda_{i_a}\in \mathcal{Y}_{r,s}$ such that $|\lambda_{i_a}|$ is odd for each $1\leq a \leq n_1$. 

\item $\vec{\lambda}_j=(\sigma (\lambda_{j_1}), \dots, \sigma (\lambda_{j_{n_2}}))$ and $\vec{\lambda}_j^T=(\sigma (\lambda_{j_1}^T), \dots, \sigma (\lambda_{j_{n_2}}^T))$, where $\lambda_{j_a}\in \mathcal{Y}_{r,s}$ such that $|\lambda_{j_a}|$ is odd for all $1\leq a \leq n_2$

\item $\vec{\lambda}_k=(\sigma (\lambda_{k_1}), \dots, \sigma (\lambda_{k_{n_3}}))$ and $\vec{\lambda}_k^T=(\lambda_{k_1}^T, \dots,  \lambda_{k_{n_3}}^T)$, where $\lambda_{j_a}\in \mathcal{Y}_{r,s}$ such that $|\lambda_{k_a}|$ is even for all $1\leq a \leq n_3$.

\item $\vec{\lambda}_l=(\lambda_{l_1}, \lambda_{l_2}, \dots, \lambda_{l_{n_4}})$ and $\vec{\lambda}_{l}^T=(\sigma(\lambda_{l_1}^T), \dots, \sigma(\lambda_{l_{n_4}}^T))$, where $\lambda_{l_a}\in \mathcal{Y}_{r,s}$ such that $|\lambda_{l_a}|$ is even for each $1\leq a \leq n_4$. 

\item  $\vec{\beta}_i=(\beta_{i_1},\dots, \beta_{i_{m_1}})$ and $\vec{\beta}_{i}^T=(\beta_{i_1}^T, \beta_{i_2}^T, \dots, \beta_{i_{m_1}}^T)$, where $\lambda_{i_a}\in \mathcal{Y}_{r,s}$ such that $|\lambda_{i_a}|$ is even for each $1\leq a \leq m_1$. 

\item $\vec{\beta}_j=(\sigma (\beta_{j_1}), \dots, \sigma (\beta_{j_{m_2}}))$ and $\vec{\beta}_j^T=(\sigma (\beta_{j_1}^T), \dots, \sigma (\beta_{j_{m_2}}^T))$, where $\beta_{j_a}\in \mathcal{Y}_{r,s}$ such that $|\beta_{j_a}|$ is even for all $1\leq a \leq m_2$

\item $\vec{\beta}_k=(\sigma (\beta_{k_1}), \dots, \sigma (\beta_{k_{m_3}}))$ and $\vec{\beta}_k^T=(\beta_{k_1}^T, \dots,  \beta_{k_{m_3}}^T)$, where $\lambda_{j_a}\in \mathcal{Y}_{r,s}$ such that $|\beta_{k_a}|$ is odd for all $1\leq a \leq m_3$.

\item $\vec{\beta}_l=(\beta_{l_1}, \beta_{l_2}, \dots, \beta_{l_{m_4}})$ and $\vec{\beta}_{l}^T=(\sigma(\beta_{l_1}^T),  \dots,\sigma( \beta_{l_{m_4}}^T))$, where $\beta_{l_a}\in \mathcal{Y}_{r,s}$ such that $| \beta_{l_a}|$ is odd for each $1\leq a \leq m_4$. 
\end{itemize}

Let $n=\sum_{i=1}^4(n_i+m_i)$ be a positive integer, $\vec{\lambda}=\vec{\lambda}_i\cup\vec{\lambda}_j\cup \vec{\lambda}_k\cup\vec{\lambda}_l$, $\vec{\lambda}^T=\vec{\lambda}_i^T\cup\vec{\lambda}_j^T\cup \vec{\lambda}_k^T\cup\vec{\lambda}_l^T$, $\vec{\beta}=\vec{\beta}_i\cup\vec{\beta}_j\cup \vec{\beta}_k\cup\vec{\beta}_l$, $\vec{\beta}^T=\vec{\beta}_i^T\cup\vec{\beta}_j\cup \vec{\beta}_k^T\cup\vec{\beta}_l^T$ and $\mathfrak{X}$ be the data associated to $n$ distinct points on $\mathbb{P}^1$ with chosen coordinates. Then, we have the following map between conformal blocks:
$$
\alpha: \mathcal{V}_{\vec{\lambda}\cup \vec{\beta}}(\mathfrak{X}, \mathfrak{so}(2r+1),2s+1)\otimes \mathcal{V}_{\vec{\lambda}^T\cup \vec{\beta}^T}(\mathfrak{X}, \mathfrak{so}(2s+1),2r+1) \rightarrow \mathcal{V}_{\vec{\omega}_1\cup \vec{0}}(\mathfrak{X}, \mathfrak{so}(N),1),$$ where $\vec{\omega}_1=(\omega_1,\dots ,\omega_1)$ is an $(n_1+n_2+n_3+n_4)$ tuple of $\omega_1$'s and $\vec{0}=(0,\dots, 0)$ be an $(m_1+m_2+m_3+m_4)$ tuple of $0$'s. 

Assume that $(n_1+n_2+n_3+n_4)$ is even, then $\dim_{\mathbb{C}}\mathcal{V}_{\vec{\omega}_1\cup \vec{0}}(\mathfrak{X}, \mathfrak{so}(N),1)=1$ (see Section 7.1.1). Thus, we have the following map:
\begin{equation}\label{rld}
 \alpha^{\vee}:\mathcal{V}_{\vec{\lambda}\cup \vec{\beta}}(\mathfrak{X}, \mathfrak{so}(2r+1),2s+1)\rightarrow \mathcal{V}^{\dagger}_{\vec{\lambda}^T\cup \vec{\beta}^T}(\mathfrak{X}, \mathfrak{so}(2s+1),2r+1)
\end{equation}
This map $\alpha^{\vee}$ is called the rank-level duality map. The main result of this paper is the following:
\begin{theorem}\label{duality}
The rank-level duality map defined above is an isomorphism.
\end{theorem}
The rest of the paper is devoted to the proof of Theorem ~\ref{duality}. First, we observe that by Proposition ~\ref{diaaut2} and Proposition ~\ref{vacua}, we can reduce the statement of Theorem ~\ref{duality} into the following non-equivalent statements (see Remark 1.2). 
\begin{enumerate}
\item Let $\sum_{i=1}^n|\lambda_i|$ is even. $$\mathcal{V}_{\vec{\lambda}}(\mathfrak{X},\mathfrak{so}(2r+1), 2s+1)\simeq  \mathcal{V}^{\dagger}_{\vec{\lambda}^T}(\mathfrak{X},\mathfrak{so}(2s+1), 2r+1 ).$$ 

\item Let $\sum_{i=1}^n|\lambda_i|$ is odd. $$\mathcal{V}_{\vec{\lambda},0}(\mathfrak{X},\mathfrak{so}(2r+1), 2s+1)\simeq  \mathcal{V}^{\dagger}_{\vec{\lambda}^T,\sigma(0)}(\mathfrak{X},\mathfrak{so}(2s+1), 2r+1).$$ 

\item Let $\sum_{i=1}^n|\lambda_i|$ is even. $$\mathcal{V}_{\vec{\lambda},\sigma(0)}(\mathfrak{X},\mathfrak{so}(2r+1), 2s+1)\simeq \mathcal{V}^{\dagger}_{\vec{\lambda}^T,\sigma(0)}(\mathfrak{X},\mathfrak{so}(2s+1), 2r+1).$$ 
\end{enumerate}

We will show that the source and the target of the map $\alpha^{\vee}$ in \eqref{rld} has the same dimension. Our strategy for that will be to prove it for the above the above three statements. This will be done in the next section.
\begin{remark} The decomposition of the level one highest weight integrable module $\mathcal{H}_{\omega_d}$ of $\widehat{\mathfrak{so}}(2d+1)$ is given in ~\cite{H}. Furthermore, the decomposition of all level one highest weight integrable modules for the conformal pairs $(B_r, D_s)$ and $(D_r, D_s)$ are given in ~\cite{H}. In all of the above cases, the rank-level duality map is not canonically defined due to failure of some uniqueness properties.  
\end{remark}
\section{Verlinde Formula and equality of dimensions}In this section, we give a complete proof of the equality of dimensions (see Section 7.3) of the source and the target of rank-level duality maps discussed in Section 6. Our key tool is the Verlinde formula for the dimensions of conformal blocks. Another key ingredient is a generalization of a lemma from ~\cite{FH}.

\subsection{Dimensions of some conformal blocks}In this section, we calculate the dimensions of some conformal blocks which we use later in the proof of the rank-level duality. Let $\frg$ be any simple Lie algebra and $\mathfrak{s_{\theta}}$ denote the Lie subalgebra of $\frg$ isomorphic to $\mathfrak{sl}_2$ generated by $H_{\theta}$, $\frg_{\theta}$ and $\frg_{-\theta}$. A $\frg$-module $V$ of level $\ell$ decomposes as a direct sum of $\mathfrak{s}_{\theta}$-modules as follows:
$$V\simeq \oplus _{i=1}^{\ell}V^{i},$$ where $V^{i}$ is a direct sum of $\mathfrak{sl}_2$-modules isomorphic to $\operatorname{Sym}^i\mathbb{C}^2$. We recall the following description of conformal blocks on three pointed $\mathbb{P}^1$ from ~\cite{B}.
\begin{proposition}\label{bea}
Let $\mathfrak{X}$ be the data associated to a three pointed $\mathbb{P}^1$ with chosen coordinates and $\lambda, \mu, \nu \in P_{\ell}(\frg)$. Then, the conformal block $\mathcal{V}^{\dagger}_{\lambda, \mu, \nu}(\mathfrak{X}, \frg)$ is canonically isomorphic to the space of $\frg$-invariant forms $\phi$ on $V_{\lambda}\otimes V_{\mu}\otimes V_{\nu}$ such that $\phi$ restricted to $V_{\lambda}^p\otimes V_{\mu}^q \otimes V_{\nu}^r$ is zero when ever $p+q+r >2\ell$.
\end{proposition}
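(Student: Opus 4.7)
The plan is to realize the isomorphism as the restriction map
$$\rho\colon \mathcal{V}^\dagger_{\lambda,\mu,\nu}(\mathfrak{X},\frg) \longrightarrow (V_\lambda \otimes V_\mu \otimes V_\nu)^*$$
induced by the inclusion $V_\lambda \otimes V_\mu \otimes V_\nu \hookrightarrow \mathcal{H}_\lambda \otimes \mathcal{H}_\mu \otimes \mathcal{H}_\nu$, and then to identify its image with the claimed subspace. Place the three marked points at $0, 1, \infty \in \mathbb{P}^1$ with standard coordinates. Since constant functions belong to $\Gamma(\mathbb{P}^1 \setminus \{0,1,\infty\}, \mathcal{O})$, applying Remark~\ref{gauge} to any $X \in \frg$ shows that $\rho(\langle \Psi|)$ is $\frg$-invariant, so $\rho$ takes values in $\operatorname{Hom}_{\frg}(V_\lambda \otimes V_\mu \otimes V_\nu, \mathbb{C})$.

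To prove injectivity of $\rho$, I would invoke the Poincar\'e--Birkhoff--Witt theorem for $\widehat{\frg}$: every vector of $\mathcal{H}_\lambda$ is a combination of monomials $X_{j_1}(-n_1)\cdots X_{j_k}(-n_k) v$ with $v \in V_\lambda$ and $n_i \geq 1$. A rational function such as $z^{-n}$ has its principal part concentrated at $P_1$, is regular at $P_2$, and vanishes at $P_3$; plugging $X \otimes z^{-n}$ into the gauge condition trades an $X(-n)$ acting on the first slot for non-negative-mode operators acting on the second and third. Iterating in all three slots rewrites any value $\langle \Psi | u \otimes v \otimes w \rangle$ modulo $\frg(\mathfrak{X})$ in terms of values on $V_\lambda \otimes V_\mu \otimes V_\nu$, so $\rho$ is determined by its restriction there.

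Next I would verify that the image of $\rho$ satisfies the $p+q+r > 2\ell$ vanishing. Let $\mathfrak{s}_\theta \subset \frg$ be the $\mathfrak{sl}_2$-triple spanned by $X_\theta, X_{-\theta}, H_\theta$; since $(\theta,\theta)=2$, this embedding has Dynkin index one, so the restriction of $\mathcal{H}_\lambda$ to $\widehat{\mathfrak{s}}_\theta$ breaks up into integrable modules of level $\ell$ in which the isotypic component $V_\lambda^p$ sits as $\mathfrak{s}_\theta$-highest-weight vectors of weight $p$. The integrability relation $X_\theta(-1)^{\ell-(\theta,\lambda)+1} v_\lambda = 0$, together with the gauge conditions for $X_\theta \otimes f$ where $f$ is a rational function transporting this relation to the other two marked points, forces $\rho(\langle \Psi|)$ to vanish on $V_\lambda^p \otimes V_\mu^q \otimes V_\nu^r$ whenever $p+q+r > 2\ell$. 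This is precisely the classical $\mathfrak{sl}_2$-fusion rule at level $\ell$.

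The main obstacle is surjectivity onto the constrained subspace. Given a $\frg$-invariant $\phi$ on $V_\lambda \otimes V_\mu \otimes V_\nu$ satisfying the $2\ell$-bound, one must produce a linear form $\langle \widetilde\Psi|$ on $\mathcal{H}_\lambda \otimes \mathcal{H}_\mu \otimes \mathcal{H}_\nu$ that is annihilated by $\frg(\mathfrak{X})$ and restricts to $\phi$. The prescription is dictated by the injectivity step: push all negative modes into $V_\lambda \otimes V_\mu \otimes V_\nu$ via gauge, then evaluate $\phi$. The technical heart is well-definedness, since different orders of rewriting differ by combinations of the defining relations of the integrable modules $\mathcal{H}_\lambda, \mathcal{H}_\mu, \mathcal{H}_\nu$; compatibility reduces exactly to the level-$\ell$ $\mathfrak{sl}_2$-fusion rule for $\mathfrak{s}_\theta$, which is equivalent to the $2\ell$-constraint on $\phi$. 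This is essentially the content of Beauville's argument in \cite{B}, and once it is verified, the gauge conditions for all remaining rational functions follow from $\frg$-invariance together with the rewriting procedure.
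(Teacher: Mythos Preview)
The paper does not give its own proof of this proposition; it is stated as a recollection from Beauville~\cite{B} and used as a black box. Your outline is precisely the skeleton of Beauville's argument (restriction to finite-dimensional pieces, injectivity via PBW plus gauge, the $\mathfrak{s}_\theta$-reduction for the $2\ell$-constraint, and surjectivity reducing to the $\mathfrak{sl}_2$ fusion rule), so there is nothing to compare---your approach and the cited source agree.
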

\subsubsection{The case $\frg=\mathfrak{so}(2r+1)$ at level 1}
Let $\vec{p}=(P_1,P_2,P_3)$ be three distinct points on $\mathbb{P}^1$ with chosen coordinates and $\mathfrak{X}$ be the associated data. The level one dominant integral weights of $\mathfrak{so}(2r+1)$ are $0$, $\omega_1$ and $\omega_r$. Let $\mathcal{V}^{\dagger}_{\lambda_1, \lambda_2, \lambda_3}(\mathfrak{X}, \mathfrak{so}(2r+1))$ denote the conformal block on $\mathbb{P}^1$ with three marked points and weights $\lambda_1, \lambda_2, \lambda_3$ at level one. The following dimensions are calculated in ~\cite{Fa}:
\begin{itemize}
\item $\dim_{\mathbb{C}}\mathcal{V}^{\dagger}_{\omega_1, \omega_1, 0}(\mathfrak{X}, \mathfrak{so}(2r+1),1)=1.$
\item $\dim_{\mathbb{C}}\mathcal{V}^{\dagger}_{\omega_1, \omega_1, \omega_1}(\mathfrak{X}, \mathfrak{so}(2r+1),1)=0.$
\item $\dim_{\mathbb{C}}\mathcal{V}^{\dagger}_{\omega_1, \omega_1, \omega_r}(\mathfrak{X}, \mathfrak{so}(2r+1),1)=0.$
\item $\dim_{\mathbb{C}}\mathcal{V}^{\dagger}_{\omega_1, \omega_r, \omega_r}(\mathfrak{X}, \mathfrak{so}(2r+1),1)=1.$
\end{itemize}
\begin{lemma}
Let $P_1, \dots, P_n$ be $n$ distinct points on $\mathbb{P}^1$ with chosen coordinates and $\mathfrak{X}$ be the associated data. Assume that $\vec{\lambda}=(\omega_1, \dots, \omega_1)$. Then, $\dim_{\mathbb{C}}\mathcal{V}^{\dagger}_{\vec{\lambda}}(\mathfrak{X}, \mathfrak{so}(2r+1),1)=1$, if $n$ is even, and zero if $n$ is odd.
\end{lemma}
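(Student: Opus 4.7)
The plan is to induct on $n$, using the sewing theorem of Section~4.1 to reduce the $n$-point computation to the three-point dimensions already listed just above the lemma.

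Set $f(n) := \dim_{\mathbb{C}} \mathcal{V}^{\dagger}_{(\omega_1,\dots,\omega_1)}(\mathfrak{X}, \mathfrak{so}(2r+1), 1)$. For the base cases, propagation of vacua gives
\begin{equation*}
f(2) = \dim \mathcal{V}^{\dagger}_{\omega_1, \omega_1, 0}(\mathbb{P}^1) = 1,
\end{equation*}
while
\begin{equation*}
f(1) = \dim \mathcal{V}^{\dagger}_{\omega_1, 0, 0}(\mathbb{P}^1) = \dim_{\mathbb{C}} (V_{\omega_1})^{\mathfrak{so}(2r+1)} = 0,
\end{equation*}
both conclusions drawing on the three-point data recalled immediately above the lemma.

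For $n \geq 3$ I would choose a one-parameter family $\mathcal{X} \to \operatorname{Spec} \mathbb{C}[[t]]$ whose generic fiber is a smooth $n$-pointed $\mathbb{P}^1$ and whose special fiber is the union $C_1 \cup C_2$ of two projective lines glued transversally at a single node, with $\{P_1, P_2\}$ on $C_1$ and $\{P_3, \dots, P_n\}$ on $C_2$. The sewing isomorphism together with local freeness of the sheaf of conformal blocks yields
\begin{equation*}
f(n) = \sum_{\mu \in P_1(\mathfrak{so}(2r+1))} \dim \mathcal{V}^{\dagger}_{\omega_1, \omega_1, \mu^{\dagger}}(\mathbb{P}^1) \cdot \dim \mathcal{V}^{\dagger}_{\mu, \omega_1, \dots, \omega_1}(\mathbb{P}^1),
\end{equation*}
where the second factor has $n-2$ copies of $\omega_1$. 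Since $-\operatorname{id}$ lies in the Weyl group of type $B_r$, every weight in $P_1(\mathfrak{so}(2r+1)) = \{0, \omega_1, \omega_r\}$ is self-dual, so $\mu^{\dagger} = \mu$. The listed three-point data then show that only $\mu = 0$ contributes, with coefficient $1$, and propagation of vacua on the second factor collapses it to $f(n-2)$. Hence $f(n) = f(n-2)$ for $n \geq 3$, and induction finishes the proof.

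The only ``obstacle'' here is bookkeeping: one must verify self-duality of the level-$1$ weights and confirm that among the three possible $\mu$'s only $\mu = 0$ contributes a nonzero three-point factor. Both facts are immediate from the three-point data already supplied, so no substantive difficulty arises.
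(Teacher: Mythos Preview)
Your proposal is correct and is exactly the argument the paper has in mind: the paper's proof reads in its entirety ``The proof follows from above and factorization of conformal blocks,'' and your write-up simply unpacks that sentence by running the factorization recursion $f(n)=f(n-2)$ using the listed three-point dimensions for $\mu\in\{0,\omega_1,\omega_r\}$. There is nothing to add.
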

\begin{proof}
The proof follows from above and factorization of conformal blocks.
\end{proof}
\subsubsection{The case $\frg=\mathfrak{so}(2r+1)$ at level $\ell$}We calculate the dimensions of some special conformal blocks on three pointed $\mathbb{P}^1$ at any level $\ell$. We first recall the following tensor product decomposition from ~\cite{L}:
\begin{proposition}\label{lr}
Let $\lambda =\sum_{i=1}^{r} a_i\omega_i \in P_{+}^0$. Then,
$$V_{\lambda}\otimes V_{\omega_1}\simeq \oplus_{\gamma}V_{\gamma},$$ where $\gamma$ is of the following form: 
\begin{itemize}
\item If $a_r\neq 0$, then $\gamma$ is either $\lambda$ or obtained from $\lambda$ by adding or deleting a box from the Young diagram of $\lambda$.
\item If $a_r=0$, then $\gamma$ is obtained from $\lambda$ by adding or deleting a box from the Young diagram of $\lambda$.
\end{itemize}

\end{proposition}\label{1dima}
We use the above proposition to calculate the dimensions of the following conformal blocks. 
\begin{proposition}\label{1dim} Let $\lambda=\sum_{i=1}^r a_i\omega_i \in P_{\ell}^0(\mathfrak{so}(2r+1))$. Then, the dimension of the conformal block $\mathcal{V}^{\dagger}_{\lambda,\gamma, \omega_1}(\mathfrak{X}, \mathfrak{so}(2r+1),\ell)$ is one, if  $\gamma \in P_{\ell}^0(\mathfrak{so}(2r+1))$ is of the form in Proposition \ref{lr} and zero otherwise.

\end{proposition}
\begin{proof}
The otherwise part follows from Proposition ~\ref{lr}. Assume that $a_r \neq 0$ and $\gamma$ is either $\lambda$ or obtained from $\lambda$ by adding or deleting a box. For a $\mathfrak{so}(2r+1)$-equivariant form $\phi$ on $V_{\lambda}\otimes V_{\omega_1}\otimes V_{\gamma}$, it's restriction to $V_{\omega_1}^{1}\otimes V_{\lambda}^{\ell}\otimes V_{\gamma}^{\ell}$ is zero, since $\mathbb{C}^2\otimes \operatorname{Sym}^{\ell}\mathbb{C}^2$ does not contain $\operatorname{Sym}^{\ell}\mathbb{C}^2$ as an $\mathfrak{sl}_2(\mathbb{C})$-submodule. Thus, by Proposition ~\ref{bea} and Proposition \ref{lr}, the dimension of $\mathcal{V}^{\dagger}_{\lambda,\gamma, \omega_1}(\mathfrak{X}, \mathfrak{so}(2r+1),\ell)$ is one. The case when $a_r=0$ follows similarly.

\end{proof}

\subsection{Verlinde formula}In this section, we recall the Verlinde formula that calculates dimensions of conformal blocks. First, we start with the Weyl character formula. 
\subsubsection{Weyl character formula}

Here, we first state a basic matrix identity which is an easy generalization of Lemma A.42 from ~\cite{FH}. Suppose $A=(a_{ij})$ is an $(r+s)\times (r+s)$ matrix and $U=(u_1,\dots,u_r)$ and $T=(t_1,\dots,t_r)$ be two sequences of $r$ distinct integers from $\{1,2,\dots, (r+s)\}$. Let $A_{U,T}$ denote the $(r\times r)$ matrix whose $(i,j)$-th entry is $a_{u_i,t_j}$ Similarly define the $(s\times s)$ matrix $B_{T^c,U^c}$, where $U^c$ and $T^c$ are the complements of $U$ and $T$ respectively.
\begin{lemma}\label{surprise}
Let $A$ and $B$ be two $(r+s)\times (r+s)$ matrices whose product is a diagonal matrix $D$. Suppose the $(i,i)$-th entry of $D$ is $a_i$. Let $\pi=(U,U^c)$ and $(T,T^c)$ be permutations of the sequence $(1,\dots, r+s)$, where $|U|=|T|=r$. Then, the following identity of determinants holds:
$$\big(a_{\pi(r+1)}\dots a_{\pi(r+s)}\big)\operatorname{det}A_{U,T}= \operatorname{sgn}(U,U^c)\operatorname{sgn}(T,T^c)\operatorname{det}A\operatorname{det}B_{T^c,U^c}.$$
\end{lemma}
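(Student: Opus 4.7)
The identity is a Jacobi-type statement about complementary minors of a matrix and its inverse; the special case $U=T$ is essentially Lemma A.42 of~\cite{FH}, and the same method generalizes. My plan is to proceed in three short steps. First, both sides of the claimed identity are polynomial in the entries of $A$ and $B$ subject to the constraint $AB=D$, so by a Zariski-density argument on the affine variety $\{(A,B):AB\text{ diagonal}\}$ it suffices to treat the case where $A$ is invertible and every $a_i$ is nonzero. Moreover, reordering the sequence $U$ multiplies both $\det A_{U,T}$ and $\operatorname{sgn}(U,U^c)$ by the sign of the corresponding permutation (and similarly for $T$), so I may assume $U=(u_1<\cdots<u_r)$ and $T=(t_1<\cdots<t_r)$ are listed in increasing order.

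Second, in this regime the relation $AB=D$ rearranges to $A^{-1}=D^{-1}B$, i.e.\ $(A^{-1})_{ij}=B_{ij}/a_j$ entry-wise. Taking the determinant of the $(T^c,U^c)$-submatrix immediately gives
\[
\det(A^{-1})_{T^c,U^c}=\Bigl(\prod_{j\in U^c}a_j^{-1}\Bigr)\det B_{T^c,U^c}.
\]
On the other hand, I invoke the classical Jacobi complementary-minor identity: for an invertible $n\times n$ matrix $A$ and index sets $I,J\subseteq\{1,\dots,n\}$ with $|I|=|J|$,
\[
\det(A^{-1})_{I,J}\cdot\det A=(-1)^{s(I)+s(J)}\,\det A_{J^c,I^c},\qquad s(X):=\sum_{x\in X}x.
\]
Applying this with $I=T^c$, $J=U^c$ and equating the two expressions for $\det(A^{-1})_{T^c,U^c}$ produces
\[
\Bigl(\prod_{j\in U^c}a_j\Bigr)\det A_{U,T}=(-1)^{s(T^c)+s(U^c)}\,\det A\cdot\det B_{T^c,U^c}.
\]

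It then remains to check that $(-1)^{s(T^c)+s(U^c)}$ agrees with $\operatorname{sgn}(U,U^c)\operatorname{sgn}(T,T^c)$. For an increasing sequence $U$ one has $\operatorname{sgn}(U,U^c)=(-1)^{s(U)-r(r+1)/2}$, and a short parity computation using $s(U)+s(U^c)=(r+s)(r+s+1)/2$ together with the evenness of $r(r+1)$ and $(r+s)(r+s+1)$ yields the required identity. The main obstacle is purely the sign bookkeeping; once Jacobi's complementary-minor identity is invoked (or re-derived by a single Laplace/cofactor expansion), the rest of the argument is entirely formal.
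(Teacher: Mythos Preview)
Your argument is correct: the Zariski-density reduction to invertible $A$ and nonzero $a_i$, the identification $A^{-1}=BD^{-1}$, Jacobi's complementary-minor identity, and the sign bookkeeping all go through. In particular your parity check works because the troublesome terms $r(r+1)/2$ and $(r+s)(r+s+1)/2$ each appear \emph{twice} (once for $U$ and once for $T$), so one is really using the evenness of $r(r+1)$ and $(r+s)(r+s+1)$, exactly as you say.

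The paper takes a different, more self-contained route. Rather than invoking Jacobi's identity and a density argument, it conjugates $A$ and $B$ by the permutation matrices $P,Q$ associated to $(U,U^c)$ and $(T,T^c)$, writes the resulting matrices in $2\times2$ block form, and observes that the single product
\[
\begin{pmatrix} A_1 & A_2 \\ A_3 & A_4 \end{pmatrix}
\begin{pmatrix} I & B_2 \\ 0 & B_4 \end{pmatrix}
=
\begin{pmatrix} A_1 & 0 \\ A_3 & \Lambda \end{pmatrix}
\]
(with $A_1=A_{U,T}$, $B_4=B_{T^c,U^c}$, and $\Lambda=\operatorname{diag}(a_{\pi(r+1)},\dots,a_{\pi(r+s)})$) follows directly from $AB=D$; taking determinants gives the identity with no invertibility hypothesis and no external lemma. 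Your approach has the advantage of explaining \emph{why} the identity holds---it is literally Jacobi's theorem---while the paper's approach is shorter, avoids the genericity reduction, and in effect re-derives the needed special case of Jacobi in two lines.
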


\begin{proof}
Consider the permutation matrices $P$, $Q^{-1}$ associated to the permutation $(U,U^c)$ and $(T,T^c)$ respectively. Then,
\[PAQ=\left(\begin{array}{cc}
A_1 & A_2 \\
A_3 & A_4 \end{array}\right),\ \mbox{where $A_{U,T}=A_1$}\] and similarly 

\[Q^{-1}BP^{-1}=\left(\begin{array}{cc}
B_1 & B_2 \\
B_3 & B_4 \end{array}\right),\ \mbox{where $B_{T^c,U^c}=B_4.$}\] 

Now 
\[\left(\begin{array}{cc}
A_1 & A_2 \\
A_3 & A_4 \end{array}\right)\times \left(\begin{array}{cc}
I_k & B_2\\
0& B_4 \end{array}\right)=\left(\begin{array}{cc}
A_1 & 0\\
A_3 & \Lambda. \end{array}\right)
\] where $\Lambda$ is a diagonal matrix whose $(i,i)$-th entry is $a_{\pi(r+i)}$. Taking determinant of both sides of the above matrix equation, we get the desired equality.
\end{proof}

We are now ready to state the Weyl character formula for $\mathfrak{so}(2r+1)$ following ~\cite{FH}. Let $\mu \in P^0_{2s+1}(\mathfrak{so}(2r+1))$ and $\mu+ \rho=\sum_{i=1}^r u_iL_i$, where $u_i$'s is as defined in Section \ref{branching2}. Let $\lambda=\sum_{i=1}^{r}\lambda^i L_i$ be any dominant integral weight of $\mathfrak{so}(2r+1)$ and $V_{\lambda}$ be the irreducible highest weight module of $\mathfrak{so}(2r+1)$ with weight $\lambda$. Then, by the Weyl character formula 
\begin{equation}\label{WCF}
\operatorname{Tr}_{V_{\lambda}}(\exp{\pi\sqrt{-1}\frac{ \mu+ \rho}{(r+s)}})=\frac{\operatorname{det}\big(\zeta^{u_i(\lambda_j+r-j+\frac{1}{2})}-\zeta^{-{u_i(\lambda_j+r-j+\frac{1}{2})}}\big)}{\operatorname{det}\big(\zeta^{u_i(r-j+\frac{1}{2})}-\zeta^{-{u_i(r-j+\frac{1}{2})}}\big)},
\end{equation}
where $\mu+\rho$ is considered as element of $\mathfrak{h}$ under the identification of $\mathfrak{h}$ with $\mathfrak{h}^*$, $\exp$ is the exponential map from $\mathfrak{so}(2r+1)$ to $\operatorname{SO}_{2r+1}$, $\zeta=\exp{\big(\frac{\pi\sqrt{-1}}{r+s}\big)}$.

\subsubsection{Verlinde Formula}
Let us first recall the Verlinde formula in full generality. Let $C$ be a nodal curve of genus $g$ and $P_1,\dots, P_n$ be $n$ distinct smooth points on $C$ and $\mathfrak{X}$ be the associated data. We fix a Lie algebra $\frg$ and $\vec{\lambda}=(\lambda_1,\dots ,\lambda_n)$-an $n$ tuple of dominant integral weights of $\frg $ of level $\ell$. We refer the reader to ~\cite{B, F11, TUY} for a proof of the following:
\begin{theorem}The dimension of the conformal block $\mathcal{V}^{\dagger}_{\vec{\lambda}}(\mathfrak{X}, \mathfrak{g},\ell)$ is
$$\{(\ell + g^*)^{\operatorname{rank}\frg}|P/Q_{long}|\}^{g-1}\sum_{\mu \in P_{\ell}(\frg)}\operatorname{Tr}_{V_{\vec{\lambda}}}(\exp{2\pi\sqrt{-1}\frac{\mu+\rho}{\ell+g^*}})\prod_{\alpha >0}\bigg| 2\sin{\pi}\frac{(\mu +\rho,\alpha)}{\ell+ g^*}\bigg|^{2-2g},$$ where $\exp$ is the exponential map from $\frg$ to the simply connected Lie group $G$, $Q_{long}$ is the lattice of long roots and $g^*$ is the dual Coxeter number of $\frg$.
\end{theorem}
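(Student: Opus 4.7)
The plan is to reduce the dimension computation to a sum of three-point fusion coefficients via iterated factorization, then diagonalize the resulting fusion algebra using Weyl characters of $\frg$ at distinguished torus elements.

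First I would apply the sewing/factorization isomorphism of Section 4 repeatedly. A smooth projective curve of genus $g$ with $n$ marked points degenerates stably to an irreducible rational nodal curve whose normalization is $\mathbb{P}^1$ with $n + 2g$ marked points. The factorization isomorphism and summation over weights at each node give
$$\dim \mathcal{V}^{\dagger}_{\vec{\lambda}}(\mathfrak{X}, \frg) = \sum_{\mu_1, \dots, \mu_g \in P_{\ell}(\frg)} \dim \mathcal{V}^{\dagger}_{\vec{\lambda}, \mu_1, \mu_1^{\dagger}, \dots, \mu_g, \mu_g^{\dagger}}(\mathbb{P}^1, \frg).$$
Iterating factorization on the $\mathbb{P}^1$ expresses the right-hand side as a sum over compatible internal weights of products of three-point fusion coefficients $N_{\alpha \beta \gamma} := \dim \mathcal{V}^{\dagger}_{\alpha, \beta, \gamma}(\mathbb{P}^1, \frg)$, which we package into the \emph{Verlinde fusion algebra} $R_{\ell}(\frg)$: the commutative $\mathbb{C}$-algebra with basis $\{e_{\lambda}\}_{\lambda \in P_{\ell}(\frg)}$ and product $e_{\lambda} \cdot e_{\mu} = \sum_{\nu} N_{\lambda \mu \nu^{\dagger}} e_{\nu}$.

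Next I would diagonalize $R_{\ell}(\frg)$. Propagation of vacua yields $N_{\lambda \mu 0} = \delta_{\lambda \mu^{\dagger}}$, so the trace-form $\langle e_{\lambda}, e_{\mu} \rangle := N_{\lambda \mu 0}$ is non-degenerate, $R_{\ell}(\frg)$ is semisimple, and its one-dimensional characters $\chi_{\mu}$ are indexed by $\mu \in P_{\ell}(\frg)$. The Kac--Peterson identification reads
$$\chi_{\mu}(e_{\lambda}) = \frac{\operatorname{Tr}_{V_{\lambda}}(t_{\mu})}{\operatorname{Tr}_{V_0}(t_{\mu})}, \qquad t_{\mu} := \exp\!\left(\tfrac{2\pi \sqrt{-1}\,(\mu + \rho)}{\ell + g^{*}}\right),$$
where, by the Weyl denominator formula, $\operatorname{Tr}_{V_0}(t_{\mu})$ equals (up to sign and the normalization $\{(\ell + g^{*})^{\operatorname{rank}\frg}|P/Q_{\mathrm{long}}|\}^{1/2}$) the value $\prod_{\alpha > 0} 2\sin \pi (\mu + \rho, \alpha)/(\ell + g^{*})$. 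In the orthogonal idempotent basis of $R_{\ell}(\frg)$ each sewing contributes the scalar $\chi_{\mu}(\sum_{\lambda} e_{\lambda} e_{\lambda^{\dagger}}) = (\operatorname{Tr}_{V_0}(t_{\mu}))^{-2}$ by a standard dual-basis computation.

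Assembling these pieces, the dimension becomes $\sum_{\mu} \chi_{\mu}(e_{\lambda_1} \cdots e_{\lambda_n}) \cdot (\text{sewing factor})^{g}$; substituting the Kac--Peterson expressions and regrouping produces precisely the Verlinde formula with the stated global prefactor. The main obstacle is the explicit identification of the characters of $R_{\ell}(\frg)$ with Weyl characters at the points $t_{\mu}$: this is the hard Kac--Peterson input, proved via the modular transformation law for normalized affine characters under $\tau \mapsto -1/\tau$, or equivalently via the Kac--Walton formula expressing fusion coefficients as signed sums over the affine Weyl group of ordinary tensor product multiplicities. Everything else (the $|P/Q_{\mathrm{long}}|$ factor, the exact form of the sewing eigenvalue, and the combinatorics of the degeneration) is bookkeeping once that identification is in hand.
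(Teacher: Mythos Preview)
The paper does not prove this theorem; immediately after stating it the author writes ``We refer the reader to \cite{B}, \cite{F11}, \cite{TUY} for a proof.'' Your outline is essentially Beauville's argument in \cite{B}: reduce to genus zero by iterated factorization, organize the three-point numbers into the fusion ring, and diagonalize using the Kac--Peterson $S$-matrix/affine Weyl group identification of its characters with Weyl characters at the distinguished torus points. So your proposal is correct and aligned with one of the references the paper cites; there is nothing to compare against in the paper itself.
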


Let us now specialize to the case $g=0$, $\mathfrak{g}=\mathfrak{so}(2r+1)$, $\ell=2s+1$ and $\vec{\lambda}=(\lambda_1,\dots, \lambda_n)$ an $n$ tuple of weights in $P_{2s+1}^0(\mathfrak{so}(2r+1))$. The dual Coxeter number of $\mathfrak{so}(2r+1)$ is $2r-1$ and $\{(\ell + g^*)^{\operatorname{rank}\frg}|P/Q_{long}|\}=4(k)^r$, where $k=2(r+s)$. Then, by the Weyl character formula \ref{WCF}, we can rewrite the Verlinde formula as follows: 
\begin{equation}\label{nasty1}
\sum_{U\in P_{2s+1}(\mathfrak{so}(2r+1))}\prod_{q=1}^n\frac{\operatorname{det}\big(\zeta^{u_i(\lambda_q^j+r-j+\frac{1}{2})}-\zeta^{-{u_i(\lambda_q^j+r-j+\frac{1}{2})}}\big)}{\operatorname{det}\big(\zeta^{u_i(r-j+\frac{1}{2})}-\zeta^{-{u_i(r-j+\frac{1}{2})}}\big)} {\bigg( \frac{\Phi_{k}(U)}{4k^r} \bigg)},
\end{equation}
where $\mu+\rho=\sum_{i=1}^ru_iL_i$, the set $U=(u_1>u_2>\dots> u_r)$, $\lambda_q=(\lambda_q^1, \lambda_q^2 ,\dots, \lambda_q^r)$ and $\Phi_k(U)$ are as in Section 2 of ~\cite{OW}. We recall the definition of $\Phi_k(U)$ in Section \ref{trigfunc} for completeness. 

\subsection{Equality of dimensions}

\begin{lemma}\label{centertrace}Let $\sigma$ be the non-trivial element of the center of $\operatorname{Spin}(2r+1)$. The element $\sigma$ acts by diagram automorphism on $P_{2s+1}^0(\mathfrak{so}(2r+1))$. 
Then, $$\operatorname{Tr}_{V_{\vec{\lambda}}}(\exp{\pi\sqrt{-1}\frac{\sigma\mu+\rho}{r+s}})=\operatorname{Tr}_{V_{\vec{\lambda}}}(\exp{\pi\sqrt{-1}\frac{\mu+\rho}{r+s}}),$$
where $\operatorname{exp}$ is the exponential map form $\mathfrak{so}(2r+1)$ to the special orthogonal group $\operatorname{SO}(2r+1)$.
\end{lemma}
\begin{proof}
Let $\mu=\sum_{i=1}^r a_i\omega_i \in P_{2s+1}(\mathfrak{so}(2r+1))$. Then, the weight $\sigma(\mu)$ is given by the formula $(2s+1-2(a_1+\dots +a_r)+a_1+a_r)\omega_1+\sum_{i=2}^ra_i\omega_i$. We calculate the following weight:
\begin{eqnarray*}
\sigma(\mu)+\rho&=& (2s+2-2(a_1+\dots+a_r)+a_1+a_r)\omega_1+ \sum_{i=2}^r(a_i+1)\omega_i,\\
&=& ((2s+1)-(a_1+\dots +a_{r-1})-\frac{a_r}{2}+\frac{2r-1}{2})L_1 + \\
&&((a_2+ a_3+\dots+a_{r-1})+(r-2)+\frac{a_r+1}{2})L_2 + \dots +\frac{a_r+1}{2}L_r.
\end{eqnarray*}
Let $w$ be an element of the Weyl group of $\mathfrak{so}(2r+1)$ which sends $L_1 \rightarrow -L_1$. Then,
\begin{eqnarray*}
w.(\sigma \mu +\rho)&=& (a_1+a_2+\dots +\frac{a_r}{2}-(2s+1)-(2r-1)+ r-\frac{1}{2})L_1 +\\ 
                  &&((a_2+ a_3+\dots+a_{r-1})+(r-2)+\frac{a_r+1}{2})L_2 + \dots +\frac{a_r+1}{2}L_r,\\
                  &=& \mu+\rho -2(r+s)L_1.
\end{eqnarray*}
Thus, we get the following identity:
\begin{eqnarray*}
\exp(2\pi\sqrt{-1}\frac{w.(\sigma\mu+\rho)}{2(r+s)})&=&\exp(2\pi\sqrt{-1}\frac{\mu+\rho}{2(r+s)}).\\
\end{eqnarray*}
Now the proof follows directly from the Weyl character formula.
\end{proof}
\begin{remark}We refer the reader to ~\cite{B1} for a general discussion of the action of the center of the simply connected group $G$ on $P_{\ell}(\frg)$. The action of the center for all classical Lie algebras can also be found in ~\cite{OW}.
\end{remark}

Consider $\mu \in P^{+}_{2s+1}(\mathfrak{so}(2r+1))$ and $\mu' \in P_{2s+1}^0(\mathfrak{so}(2r+1))$. Let $\mu+\rho=\sum_{i=1}^ru_iL_i$ and $\mu'+\rho'=\sum_{i=1}^r(u_i'-\frac{1}{2})L_i$. Consider the sets $U=(u_1>u_2>\dots>u_r)$ , and $U'=(u_1'>\dots>u_r')$ and let $[U]$ and $[U']$ denote the class of $\mu$, $\mu'$ in $P^{+}_{2s+1}(\mathfrak{so}(2r+1))/\Gamma$ and $P^{0}_{2s+1}(\mathfrak{so}(2r+1))/\Gamma$ respectively.

 Since the function $\Phi_k$ is invariant under the action of center, by Lemma ~\ref{centertrace}, we can rewrite the Verlinde formula in ~\ref{nasty1} as the sum of the following terms:
\begin{enumerate}
\item $$\sum_{[U] \in P^{+}_{2s+1}(\mathfrak{so}(2r+1))/\Gamma}|\operatorname{Orb}_{U}|\prod_{q=1}^n\frac{\operatorname{det}\big(\zeta^{u_i(\lambda_q^j+r-j+\frac{1}{2})}-\zeta^{-{u_i(\lambda_q^j+r-j+\frac{1}{2})}}\big)}{\operatorname{det}\big(\zeta^{u_i(r-j+\frac{1}{2})}-\zeta^{-{u_i(r-j+\frac{1}{2})}}\big)} {\bigg( \frac{\Phi_{k}(U)}{4k^r} \bigg)},$$

\item $$\sum_{[U']\in P^{0}_{2s+1}(\mathfrak{so}(2r+1))/\Gamma}|\operatorname{Orb}_{U'}|\prod_{q=1}^n\frac{\operatorname{det}\big(\zeta^{(u_i'-\frac{1}{2})(\lambda_q^j+r-j+\frac{1}{2})}-\zeta^{-{(u_j'-\frac{1}{2})(\lambda_q^j+r-j+\frac{1}{2})}}\big)}{\operatorname{det}\big(\zeta^{(u_i'-\frac{1}{2})(r-j+\frac{1}{2})}-\zeta^{-{(u_i'-\frac{1}{2})(r-j+\frac{1}{2})}}\big)}{\bigg( \frac{\Phi_{k}(U'-\frac{1}{2})}{4k^r}\big)},$$
\end{enumerate}
where $|\operatorname{Orb}_U|$, $|\operatorname{Orb}_{U'}|$ denote the length of the orbits of $\mu$ and $\mu'$ under the action of $\Gamma$ on $P^{+}_{2s+1}(\mathfrak{so}(2r+1))$ and $P^{0}_{2s+1}(\mathfrak{so}(2r+1))$. The sets $P^{+}_{2s+1}(\mathfrak{so}(2r+1))/\Gamma$ and $P^{0}_{2s+1}(\mathfrak{so}(2r+1))/\Gamma$ denote the orbits of $P^{+}_{2s+1}(\mathfrak{so}(2r+1))$ and $P^{0}_{2s+1}(\mathfrak{so}(2r+1))$ under the action of $\Gamma$ respectively.

\subsection{Final Step of Dimension check}
Let us recall the following two lemmas from ~\cite{OW}. We refer the reader to ~\cite{OW}, Corollary 1.7 and Corollary 1.8 for a proof.
\begin{lemma}\label{trigequality1}

For a positive integer $a$, let $V$ and $V^{c}$ be complementary subsets of $\{1,\dots ,a-1\}$. Then,
$$\frac{(2a)^{|V|}}{\Phi_{2a}(V)}=\frac{2(2a)^{|V^{c}\cup \{a\} |}}{\Phi_{2a}(V^{c}\cup \{a\})}.$$

\end{lemma}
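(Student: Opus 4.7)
Based on the role of $\Phi_{2a}$ in the Verlinde formula for $\widehat{\mathfrak{so}}(2r+1)$ at level $2s+1$ with $k = 2(r+s) = 2a$, the function $\Phi_{2a}(U)$ is a product of squared factors $(2\sin(\pi\beta/(2a)))^2$ indexed by the values $\beta = u_i$, $u_i \pm u_j$ arising from positive roots of $B_{|U|}$ evaluated on $U$. The plan is to manipulate this trigonometric product directly, exploiting the involution $V \leftrightarrow V^c \cup \{a\}$ and the fact that $a$ is the unique fixed point of $v \mapsto 2a - v$ on $\{1,\dots,2a-1\}$.

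First I would isolate the contributions to $\Phi_{2a}(V^c \cup \{a\})$ coming from the added element $a$. The single-root factor is $(2\sin(\pi a/(2a)))^2 = 4$, and each pairing of $a$ with $w \in V^c$ yields, using the reflection identity $\sin(\pi(a \pm w)/(2a)) = \cos(\pi w/(2a))$, a factor $(2\cos(\pi w/(2a)))^4$. Thus the ``new'' ingredients brought in by $a$ are cosines whose arguments lie in the same range $(0,\pi/2)$ as the sines appearing in $\Phi_{2a}(V)$.

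Next I would apply the double-angle identity $2\sin(\pi v/(2a))\cos(\pi v/(2a)) = \sin(\pi v/a)$ to couple these cosines with sines of $\Phi_{2a}(V)$, after which the ratio $\Phi_{2a}(V^c \cup \{a\})/\Phi_{2a}(V)$ should reduce to a product of sines of rational multiples of $\pi$. This product can then be evaluated using the classical identities $\prod_{k=1}^{n-1}\sin(k\pi/n) = n/2^{n-1}$ applied for $n=a$ and $n=2a$, together with the consequence $\prod_{k=1}^{a-1}\sin^2(k\pi/(2a)) = a/2^{2a-2}$. The powers $(2a)^{|V|}$ and $(2a)^{a-|V|}$ in the statement should emerge from counting applications of these product formulas, while the lone factor of $2$ on the right should be traced to the special role played by the self-symmetric element $a$.

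The main obstacle will be the careful bookkeeping of powers of $2$, since the reflection identity, the double-angle identity, and the total-product identity each contribute factors of $2$ that must combine precisely. A secondary difficulty is handling the ``mixed'' cross-terms involving one element of $V$ and one of $V^c$: these do not appear on either side of the desired equality but arise when one writes $\Phi_{2a}(\{1,\dots,a-1\}) = \Phi_{2a}(V)\,\Phi_{2a}(V^c)\cdot(\text{mixed})$. The telescoping of these cross-terms via the total-product identity on $\{1,\dots,a-1\}$ is the computational heart of the argument.
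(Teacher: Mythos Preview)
The paper does not prove this lemma at all: it is quoted verbatim from Oxbury--Wilson \cite{OW} with the sentence ``We refer the reader to \cite{OW}, pages 2694--2695 for a proof,'' so there is no in-paper argument to compare against. Your proposal therefore goes well beyond what the paper supplies.

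Your trigonometric strategy is the natural direct route and is in fact the one taken in \cite{OW}: one writes out $\Phi_{2a}(U)=\prod_i(2\sin(\pi u_i/2a))^2\prod_{i<j}(2\sin(\pi(u_i+u_j)/2a))^2(2\sin(\pi(u_i-u_j)/2a))^2$ for the $B$-type positive-root product, peels off the contributions from the distinguished element $a$, and reduces the ratio via the double-angle identity and the classical evaluation $\prod_{k=1}^{n-1}\sin(k\pi/n)=n/2^{n-1}$. Your identification of the two technical pressure points---the bookkeeping of powers of $2$, and the cancellation of the mixed $V\times V^c$ cross-terms through the global product over $\{1,\dots,a-1\}$---is accurate; these are exactly where the computation must be done with care. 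As a plan this is correct; to turn it into a proof you would need to carry out the cross-term cancellation explicitly rather than assert that it telescopes.
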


\begin{lemma}\label{trigequality2}
Let $V'\subset S=\{\frac{1}{2}, \dots a-\frac{1}{2}\}$ and ${V'}^{c}$ be the complement. Then, we have:
$$\frac{(2a)^{|V'|}}{\Phi_{2a}(V')}=\frac{(2a)^{V'^{c}}}{\Phi_{2a}(a-V'^{c})}.$$

\end{lemma}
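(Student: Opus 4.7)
The plan is to prove the identity by direct manipulation of the trigonometric product $\Phi_{2a}$. From Section~2 of \cite{OW}, with $k=2a$, this is a product of factors $|2\sin(\pi\alpha(u)/k)|^{2}$ over the positive roots $\alpha$ of $B_a$, where $u=\sum u_i L_i$. These factors split into three groups: ``difference'' factors $|2\sin(\pi(u_i-u_j)/k)|^2$ from roots $L_i-L_j$, ``sum'' factors $|2\sin(\pi(u_i+u_j)/k)|^2$ from $L_i+L_j$, and ``single'' factors $|2\sin(\pi u_i/k)|^2$ from the short roots $L_i$.

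First, I use the symmetry $\sin(\pi(k-x)/k)=\sin(\pi x/k)$: the involution $u\mapsto a-u$ preserves $S=\{1/2,\dots,a-1/2\}$ as a whole and sends $V'^c$ to $a-V'^c\subset S$. Under this involution, each ``sum'' factor $|\sin(\pi(u+u')/k)|$ for $u,u'\in V'^c$ becomes, up to sign, the ``difference'' factor $|\sin(\pi((a-u)-(a-u'))/k)|$ for the corresponding elements of $a-V'^c$, and conversely; the ``single'' factors are permuted among themselves because $\sin(\pi(a-u)/k)$ takes values in the same collection. Thus $\Phi_{2a}(a-V'^c)$ is obtained from $\Phi_{2a}(V'^c)$ by swapping ``sum'' and ``difference'' factors on the $V'^c$-side.

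Second, the crucial observation is that the full product over all pairs from $S$ (``difference'' plus ``sum'' types together) and over all single-element factors evaluates to a fixed constant depending only on $a$, via classical sine-product identities rooted in $\prod_{j=1}^{N-1}\sin(\pi j/N)=N/2^{N-1}$. Splitting this total product into ``inside-$V'$'', ``inside-$V'^c$'', and ``mixed'' (one leg in $V'$, one in $V'^c$) contributions, the mixed parts are common to $\Phi_{2a}(V')$ and $\Phi_{2a}(a-V'^c)$ and cancel in the ratio. What remains, after tracking powers of $2a$ arising from $|V'|$ versus $|V'^c|$, yields
\begin{equation*}
\frac{(2a)^{|V'|}}{\Phi_{2a}(V')}=\frac{(2a)^{|V'^c|}}{\Phi_{2a}(a-V'^c)},
\end{equation*}
which is the claimed identity.

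The main obstacle will be the combinatorial bookkeeping: carefully matching how ``sum'' pairs inside $V'^c$ convert into ``difference'' pairs inside $a-V'^c$ under the involution, while controlling the single-element contributions and the exact powers of $2a$ absorbed on each side. I would model the argument closely on the companion Lemma~\ref{trigequality1}, where an analogous rearrangement goes through but with an extra factor of $2$ stemming from the exceptional role of the endpoint $a$ (present there because the relevant subsets were integers including $\{a\}$); here $V'$ consists of half-integers only, so no endpoint intervenes and the identity becomes fully symmetric.
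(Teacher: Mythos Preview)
The paper does not give its own proof of this lemma; it explicitly cites \cite{OW}, pages 2694--2695, for the argument. So there is no in-paper proof to compare against, only the reference.

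That said, your sketch contains a concrete error. You claim that under the involution $u\mapsto a-u$ the ``sum'' factors for $V'^{c}$ become the ``difference'' factors for $a-V'^{c}$. This is false: for $u,u'\in V'^{c}$ one has $(a-u)-(a-u')=u'-u$, so differences in $a-V'^{c}$ match differences in $V'^{c}$; likewise $(a-u)+(a-u')=2a-(u+u')$ and $\sin\bigl(\pi(2a-(u+u'))/2a\bigr)=\sin\bigl(\pi(u+u')/2a\bigr)$, so sums match sums. What actually changes are the short-root (single-element) factors: $\sin\bigl(\pi(a-u)/2a\bigr)=\cos\bigl(\pi u/2a\bigr)$. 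Thus $\Phi_{2a}(a-V'^{c})$ differs from $\Phi_{2a}(V'^{c})$ only by replacing each $|2\sin(\pi u/2a)|^{2}$ with $|2\cos(\pi u/2a)|^{2}$, not by any sum/difference swap.

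Your second paragraph is also off: $\Phi_{2a}(V')$ and $\Phi_{2a}(a-V'^{c})$ contain no ``mixed'' factors (one leg in $V'$, one in $V'^{c}$) at all, so there is nothing of that kind to cancel in their ratio. The actual mechanism in \cite{OW} is to expand the product $\Phi_{2a}(S)$ over the full set $S$ (which does evaluate to a clean power of $2a$) and split it as $\Phi_{2a}(V')\cdot\Phi_{2a}(V'^{c})\cdot(\text{cross terms})$; one then shows separately that the cross-term block itself equals a power of $2a$, using product identities of the type $\prod_{j}\sin(\theta+j\pi/N)$. Your outline skips exactly this step, which is where the work lies.
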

Let $\lambda_i\in \mathcal{Y}_{r,s}$ such that $\sum_{i=1}^n|\lambda_i|$ is even and $\mathfrak{X}$ be the data associated to $n$ distinct points on $\mathbb{P}^1$ with chosen coordinates. Denote by $\vec{\lambda}$, an $n$ tuple of weights $ (\lambda_1,\dots,\lambda_n)$ and $\vec{\lambda}^T$ the $n$ tuple of weights $(\lambda_1^T,\dots,\lambda_n^T)$. Consider the conformal blocks $\mathcal{V}^{\dagger}_{\vec{\lambda}}(\mathfrak{X},\mathfrak{so}(2r+1),2s+1)$ and $\mathcal{V}^{\dagger}_{\vec{\lambda}^T}(\mathfrak{X},\mathfrak{so}(2s+1),2r+1)$. 
\begin{proposition}\label{dim1}If $\sum_{i=1}^n|\lambda|$ is even, then the following equality of dimensions holds:
$$\dim_{\mathbb{C}}\mathcal{V}^{\dagger}_{\vec{\lambda}}(\mathfrak{X},\mathfrak{so}(2r+1),2s+1)=\dim_{\mathbb{C}}\mathcal{V}^{\dagger}_{\vec{\lambda}^T}(\mathfrak{X},\mathfrak{so}(2s+1), 2r+1).$$

\end{proposition}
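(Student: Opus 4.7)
The proof proposal is to equate the Verlinde formulas for both sides term by term, using the bijections between orbits stated in Section~\ref{branching2} and the identities collected earlier. Specifically, applying the form~\eqref{nasty2} of the Verlinde formula to $\mathcal{V}^{\dagger}_{\vec\lambda}(\mathfrak{X},\mathfrak{so}(2r+1),2s+1)$, we get a sum indexed by $[U]\in P^{+}_{2s+1}(\mathfrak{so}(2r+1))/\Gamma$ plus a sum indexed by $[U']\in P^{0}_{2s+1}(\mathfrak{so}(2r+1))/\Gamma$; doing the same for $\mathcal{V}^{\dagger}_{\vec{\lambda}^T}(\mathfrak{X},\mathfrak{so}(2s+1),2r+1)$ yields sums over $[V]\in P^{+}_{2r+1}(\mathfrak{so}(2s+1))/\Gamma$ and $[V']\in P^{0}_{2r+1}(\mathfrak{so}(2s+1))/\Gamma$ (note $2(r+s)=k$ on both sides, so the primitive root of unity $\zeta$ is the same). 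I plan to set up a bijective matching $[U]\leftrightarrow [V]$ and $[U']\leftrightarrow [V']$ using Lemma~\ref{orbitbijection} and its $P^{0}$-counterpart (sending $U\mapsto U^{c}$ and $U'\mapsto (r+s)+1-U'^{c}$) and then check that the summands agree term by term.

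For each paired orbit I would check two independent equalities. First, the purely trigonometric factor: Lemma~\ref{trigequality1} identifies $(2k)^{|U|}/\Phi_{k}(U)$ with its complementary version, and Lemma~\ref{trigequality2} does the analogous job for the shifted sets $U'-\tfrac12$; together with the orbit-length data ($|\operatorname{Orb}_{U}|=2$ unless $u_1=k/2$), these lemmas exactly account for the ratio $\Phi_{k}(U)/(4k^r)$ matching $\Phi_{k}(U^c)/(4k^s)$ up to the correct orbit-length weights. Second, the character factor: the Weyl character quotient
\[
\prod_{q=1}^{n}\frac{\det\bigl(\zeta^{u_i(\lambda_q^j+r-j+1/2)}-\zeta^{-u_i(\lambda_q^j+r-j+1/2)}\bigr)}{\det\bigl(\zeta^{u_i(r-j+1/2)}-\zeta^{-u_i(r-j+1/2)}\bigr)}
\]
should equal the corresponding product for $(\lambda_q^T,U^c)$ on the $\mathfrak{so}(2s+1)$ side. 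I would prove this by writing each numerator/denominator as a minor of a fixed $(r+s)\times(r+s)$ matrix indexed by the full set $\{1,2,\dots,r+s\}$ (with row index labelled by $U\cup U^c$ and column index labelled by the shifted partition together with its transpose, using the $\mathcal{Y}_{r,s}$ transpose identification in terms of complementary sequences of half-integers) and then applying Lemma~\ref{surprise} to both numerator and denominator. The diagonal matrix $D$ that appears arises from the identity $(\zeta^{xu}-\zeta^{-xu})(\zeta^{x(k-u)}-\zeta^{-x(k-u)})$-style factorization and its values cancel between numerator and denominator up to a uniform scalar.

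The main obstacle is bookkeeping of signs. Lemma~\ref{surprise} introduces $\operatorname{sgn}(U,U^c)\operatorname{sgn}(T,T^c)$ factors, and when $T$ is the shifted-partition sequence depending on $\lambda_q$ these signs depend on $\lambda_q$. The sign coming from the denominator is the same in all summands, so it cancels between the two sides, but the numerator contributes, for each $q$, a sign whose parity I expect to equal $|\lambda_q|\bmod 2$ (because transposing a Young diagram in $\mathcal{Y}_{r,s}$ and then passing to the complement in $[r+s]$ incurs a permutation whose sign is controlled by the number of boxes). Hence the product over $q$ contributes the overall sign $(-1)^{\sum_q|\lambda_q|}$, which is $+1$ precisely under the hypothesis $\sum_{i=1}^{n}|\lambda_i|$ is even. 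This is the step where the parity assumption is used and it is what I expect to be most delicate.

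Once both factors match orbit-by-orbit, adding the two orbit sums gives the equality of total dimensions, proving the proposition. The two analogous statements (odd parity with an extra $0$, and even parity with an extra $\sigma(0)$) will follow by the same argument combined with Propositions~\ref{vacua} and~\ref{diaaut2}; the orbit bijection in those cases is the $P^{0}$-version rather than the $P^{+}$-version, and the sign from the extra marked weight compensates for the change in parity.
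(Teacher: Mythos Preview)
Your proposal is correct and follows essentially the same route as the paper: the paper's proof matches orbits via Lemma~\ref{orbitbijection} (and its $P^0$ analogue), handles the $\Phi_k$/orbit-length factors with Lemmas~\ref{trigequality1} and~\ref{trigequality2}, and reduces the character ratios to a determinant identity (packaged there as Lemma~\ref{dirty1}) proved exactly by your minor-of-a-fixed-matrix trick using Lemma~\ref{surprise}, with the sign computation yielding $(-1)^{|\lambda_q|}$ per factor so that the parity hypothesis gives $+1$. The only minor discrepancy is in your final remark: the companion propositions with an extra $\sigma(0)$ are handled in the paper not via Propositions~\ref{vacua} and~\ref{diaaut2} but by direct trace evaluations (Lemmas~\ref{trivial1} and~\ref{trivial2}) showing that the character of $V_{\sigma(0)}$ at the relevant points is $\pm 1$.
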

\begin{proof}
By Lemma ~\ref{orbitbijection}, it is enough to show that the following equalities hold
\begin{eqnarray*}
&&|\operatorname{Orb}_{U}|\prod_{q=1}^n\frac{\operatorname{det}\big(\zeta^{u_i(\lambda_q^j+r-j+\frac{1}{2})}-\zeta^{-{u_i(\lambda_q^j+r-j+\frac{1}{2})}}\big)}{\operatorname{det}\big(\zeta^{u_i(r-j+\frac{1}{2})}-\zeta^{-{u_i(r-j+\frac{1}{2})}}\big)} {\bigg( \frac{\Phi_{k}(U)}{4k^r} \bigg)}\\
&&=|\operatorname{Orb}_{U^c}|\prod_{q=1}^n\frac{\operatorname{det}\big(\zeta^{u^c_i((\lambda_q^T)^j+r-j+\frac{1}{2})}-\zeta^{-{u^c_i((\lambda^T_q)^j+r-j+\frac{1}{2})}}\big)}{\operatorname{det}\big(\zeta^{u^c_i(r-j+\frac{1}{2})}-\zeta^{-{u^c_i(r-j+\frac{1}{2})}}\big)} {\bigg( \frac{\Phi_{k}(U^c)}{4k^s} \bigg)},
\end{eqnarray*}
where $U=\{u_1>\dots >u_r)\}\in P^{+}_{2s+1}(\mathfrak{so}(2r+1))/\Gamma$ and $r+s\in U$ and $u_i^c$ is same as in Section 5.
\begin{eqnarray*}
&&|\operatorname{Orb}_{U'}|\prod_{q=1}^n\frac{\operatorname{det}\big(\zeta^{(u_i'-\frac{1}{2})(\lambda_q^j+r-j+\frac{1}{2})}-\zeta^{-{(u_i'-\frac{1}{2})(\lambda_q^j+r-j+\frac{1}{2})}}\big)}{\operatorname{det}\big(\zeta^{(u_i'-\frac{1}{2})(r-j+\frac{1}{2})}-\zeta^{-{(u_i'-\frac{1}{2})(r-j+\frac{1}{2})}}\big)} {\bigg( \frac{\Phi_{k}(U'-\frac{1}{2})}{4k^r} \bigg)}\\
&=&|\operatorname{Orb}_{((r+s+1)-U'^c)}|\times\\ &&\prod_{q=1}^n\frac{\operatorname{det}\big(\zeta^{(u''_i-\frac{1}{2})((\lambda_q^T)^j+r-j+\frac{1}{2})}-\zeta^{-{(u''_i-\frac{1}{2})((\lambda_q^T)^j+r-j+\frac{1}{2})}}\big)}{\operatorname{det}\big(\zeta^{(u''_i-\frac{1}{2})(r-j+\frac{1}{2})}-\zeta^{-{(u''_i-\frac{1}{2})(r-j+\frac{1}{2})}}\big)} {\bigg( \frac{\Phi_{k}((r+s+\frac{1}{2})-U'^c)}{4k^s} \bigg)},
\end{eqnarray*}
where $U'=\{u_1'>u_2'>\dots > u_r'\}\in P^0_{2s+1}(\mathfrak{so}(2r+1))/\Gamma$ and $u_i''$ is same as defined in Section 5.
 Now by Lemma ~\ref{trigequality1} and Lemma ~\ref{trigequality2}, we know that 
$$|\operatorname{Orb}_{U}|{\bigg( \frac{\Phi_{k}(U)}{4k^r} \bigg)}=|\operatorname{Orb}_{U^c}|{\bigg( \frac{\Phi_{k}(U^c)}{4k^s} \bigg)}.$$
$${\bigg( \frac{\Phi_{k}(U'-\frac{1}{2})}{4k^r} \bigg)}={\bigg( \frac{\Phi_{k}((r+s+\frac{1}{2})-U'^c)}{4k^s} \bigg)}.$$
We are reduced to show the following identity of determinants for the pair $(U,U^c)$:
$$\prod_{q=1}^n\frac{\operatorname{det}\big(\zeta^{u_i(\lambda_q^j+r-j+\frac{1}{2})}-\zeta^{-{u_i(\lambda_q^j+r-j+\frac{1}{2})}}\big)}{\operatorname{det}\big(\zeta^{u_i(r-j+\frac{1}{2})}-\zeta^{-{u_i(r-j+\frac{1}{2})}}\big)}=\prod_{q=1}^n\frac{\operatorname{det}\big(\zeta^{u^c_i((\lambda_q^T)^j+r-j+\frac{1}{2})}-\zeta^{-{u^c_i((\lambda_q^T)^j+r-j+\frac{1}{2})}}\big)}{\operatorname{det}\big(\zeta^{u^c_i(r-j+\frac{1}{2})}-\zeta^{-{u^c_i(r-j+\frac{1}{2})}}\big)}.$$ 
This follows directly from Lemma ~\ref{dirty1}. We also need to show the following equality of determinants for the pair $(U',U'^c)$ and $\lambda^T_q=({(\lambda^T_q)}^1\geq \dots \geq {(\lambda^T_q)}^s)$.
\begin{eqnarray*}
&&\prod_{q=1}^n\frac{\operatorname{det}\big(\zeta^{(u_i'-\frac{1}{2})(\lambda_q^j+r-j+\frac{1}{2})}-\zeta^{-{(u_i'-\frac{1}{2})(\lambda_q^j+r-j+\frac{1}{2})}}\big)}{\operatorname{det}\big(\zeta^{(u_i'-\frac{1}{2})(r-j+\frac{1}{2})}-\zeta^{-{(u_i'-\frac{1}{2})(r-j+\frac{1}{2})}}\big)}\\
 &&= \prod_{q=1}^n\frac{\operatorname{det}\big(\zeta^{(u''_i-\frac{1}{2})((\lambda_q^T)^j+r-j+\frac{1}{2})}-\zeta^{-{(u''_i-\frac{1}{2})((\lambda_q^T)^j+r-j+\frac{1}{2})}}\big)}{\operatorname{det}\big(\zeta^{(u''_i-\frac{1}{2})(r-j+\frac{1}{2})}-\zeta^{-{(u''_i-\frac{1}{2})(r-j+\frac{1}{2})}}\big)},
\end{eqnarray*}

This follows from Lemma ~\ref{dirty2}. 
\end{proof}
With the same notation and assumptions as in Proposition ~\ref{dim1}, we have the following proposition. 
\begin{proposition}If $\sum_{i=1}^n|\lambda|$ is even, then 
$$\dim_{\mathbb{C}}\mathcal{V}^{\dagger}_{\vec{\lambda}\cup\sigma (0)}(\mathfrak{X},\mathfrak{so}(2r+1),2s+1)=\dim_{\mathbb{C}}\mathcal{V}^{\dagger}_{\vec{\lambda}^T\cup\sigma( 0)}(\mathfrak{X},\mathfrak{so}(2s+1),2r+1).$$
\end{proposition}\label{dim2}

\begin{proof}
The proof follows from the proof of Proposition ~\ref{dim1}, Lemma ~\ref{trivial1} and Lemma ~\ref{trivial2}
\end{proof}

For each $1\leq i \leq n$, let $\lambda_i\in \mathcal{Y}_{r,s}$ be such that $\sum_{i=1}^n|\lambda_i|$ is odd. Let $\mathfrak{X}$ be the data associated to $n$ distinct points on $\mathbb{P}^1$ with chosen coordinates. Denote by $\vec{\lambda}$ the $n$ tuple of weights $ (\lambda_1,\dots,\lambda_n)$ and $\vec{\lambda}^T$ the $n$ tuple of weights $(\lambda_1^T,\dots,\lambda_n^T)$. Consider the conformal blocks $\mathcal{V}^{\dagger}_{\vec{\lambda}\cup 0}(\mathfrak{X},\mathfrak{so}(2r+1),2s+1)$ and $\mathcal{V}^{\dagger}_{\vec{\lambda}^T\cup\sigma (0)}(\mathfrak{X},\mathfrak{so}(2s+1),2r+1)$. Then, we have the following equality of dimensions:
\begin{proposition}\label{dim3}
$$\dim_{\mathbb{C}}\mathcal{V}^{\dagger}_{\vec{\lambda}\cup 0}(\mathfrak{X},\mathfrak{so}(2r+1),2s+1)=\dim_{\mathbb{C}}\mathcal{V}^{\dagger}_{\vec{\lambda}^T\cup\sigma (0)}(\mathfrak{X},\mathfrak{so}(2s+1),2r+1).$$
\end{proposition}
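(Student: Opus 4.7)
\textbf{Proof proposal for Proposition \ref{dim3}.}
The plan is to mirror the strategy used in Propositions \ref{dim1} and \ref{dim2}, now accounting for the parity mismatch between the two sides. The starting point is the orbit form of the Verlinde formula applied to both $\mathcal{V}^{\dagger}_{\vec{\lambda}\cup 0}(\mathfrak{X},\mathfrak{so}(2r+1),2s+1)$ and $\mathcal{V}^{\dagger}_{\vec{\lambda}^T\cup\sigma(0)}(\mathfrak{X},\mathfrak{so}(2s+1),2r+1)$. This reduces the problem to establishing, for each matched pair of $\Gamma$-orbits $[U] \leftrightarrow [U^c]$ coming from Lemma \ref{orbitbijection}, and each matched pair $[U'] \leftrightarrow [(r+s+1)-{U'}^c]$ coming from its $P^0$-analogue, the equality of the corresponding summands in the Verlinde sum.

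First I would observe that the insertion of the trivial weight $0$ on the left contributes a factor $\chi_0 \equiv 1$ to every term of the Verlinde sum and so adds nothing new beyond what the proof of Proposition \ref{dim1} already handles. The new ingredient is the trace factor $\chi_{\sigma(0)}$ appearing on the right, evaluated at $\exp(\pi i(\mu'+\rho)/(r+s))$ for $\mu'\in P_{2r+1}(\mathfrak{so}(2s+1))$. Writing $\sigma(0)=(2r+1)\omega_1$ as a weight of $\mathfrak{so}(2s+1)$ and applying the Weyl character formula expresses this as an explicit ratio of antisymmetric trigonometric sums, which can then be simplified via Lemmas \ref{trivial1} and \ref{trivial2} (the same lemmas that were used to upgrade Proposition \ref{dim1} to Proposition \ref{dim2}) to a sign times a phase depending only on the orbit class of $\mu'$.

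The crucial mechanism is that the parity condition $\sum_{i=1}^n|\lambda_i|$ odd forces exactly the sign discrepancy that the $\chi_{\sigma(0)}$ factor on the right supplies. Concretely, the determinantal identity of Lemma \ref{dirty1} relating the trace factors for $\vec{\lambda}$ and $\vec{\lambda}^T$ under the exchange $U \leftrightarrow U^c$ picks up a sign governed by $\operatorname{sgn}(U,U^c)\operatorname{sgn}(T,T^c)$ (in the language of Lemma \ref{surprise}); when $\sum|\lambda_i|$ is odd this sign is the nontrivial one, and the $\chi_{\sigma(0)}$ factor produced by Lemmas \ref{trivial1} and \ref{trivial2} is precisely the compensating sign. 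The $\Phi_k$ prefactors on the two sides are matched as in Proposition \ref{dim1} by Lemmas \ref{trigequality1} and \ref{trigequality2}, and the orbit sizes by the two orbit bijections, so the matched contributions agree term by term.

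The main obstacle is the bookkeeping in this sign analysis: one has to verify that the sign produced by $\chi_{\sigma(0)}$ at the orbit representative on the right is exactly the sign flip induced on the left when transposing Young diagrams with odd total size. Once this sign identity is in place, whether derived as a part of Lemmas \ref{trivial1} and \ref{trivial2} or directly from the Weyl character formula combined with Lemma \ref{surprise}, the proposition follows by summing the matched contributions over $P^{+}_{2s+1}(\mathfrak{so}(2r+1))/\Gamma$ and $P^{0}_{2s+1}(\mathfrak{so}(2r+1))/\Gamma$ as in Proposition \ref{dim1}.
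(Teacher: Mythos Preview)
Your approach is correct and mirrors exactly what the paper intends: the paper gives no explicit proof of Proposition~\ref{dim3}, but its placement immediately after Propositions~\ref{dim1} and~\ref{dim2}, together with Lemmas~\ref{trivial1}, \ref{trivial2}, \ref{dirty1}, and \ref{dirty2}, makes clear that the same orbit-by-orbit comparison is meant, with the extra $\chi_{\sigma(0)}$ factor supplying the sign $(-1)^{\sum|\lambda_i|}=-1$ on the $P^+$ orbits and $+1$ on the $P^0$ orbits.

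One small correction to your bookkeeping: the sign arising from Lemma~\ref{dirty1} is $\operatorname{sgn}([\alpha],[\beta])/\operatorname{sgn}(T,T^c)=(-1)^{|\lambda|}$, not $\operatorname{sgn}(U,U^c)\operatorname{sgn}(T,T^c)$; the $\operatorname{sgn}(U,U^c)$ factors cancel between numerator and denominator of the trace ratio because the same $U$ appears in both. This does not affect your conclusion, since the product over $q$ still gives $(-1)^{\sum|\lambda_q|}$ on the $P^+$ side and $+1$ on the $P^0$ side (by Lemma~\ref{dirty2}), which is precisely what $\chi_{\sigma(0)}$ contributes on the $\mathfrak{so}(2s+1)$ side via the analogues of Lemmas~\ref{trivial1} and~\ref{trivial2}.
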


\begin{remark}
These equalities of the dimensions of the conformal blocks give rise to some new interesting relations between the fusion ring of $\operatorname{SO}(2r+1)$ at level $2s+1$ with the fusion ring of $\operatorname{SO}(2s+1)$ at level $2r+1$.
\end{remark}

\section{Highest Weight Vectors}
In this section, we briefly summarize the construction of level one highest weight integrable modules $\mathcal{H}_{0}(\mathfrak{so}(2r+1))$ and $\mathcal{H}_{1}(\mathfrak{so}(2r+1))$ using Clifford algebras. We use this to explicitly describe the highest weight vectors (see Section 8.2) of the components that appear in the branching. Our discussions closely follow the discussions in ~\cite{H}.

\subsection{Spin modules} We first recall the definition of Clifford algebra. Let $W$ be a vector space (not necessarily finite dimensional) with a non-degenerate bilinear form $\{,\}$.

\begin{definition} We define the {\em Clifford Algebra }associated to $W$ and $\{,\}$ to be $$C(W):=T(W)/ I,$$ where $T(W)$ is the tensor algebra of $W$ and $I$ is the two sided ideal generated by elements of the form $v\otimes w+ w\otimes v -\{v,w\}$. 
\end{definition}
\subsubsection{Spin module of $C(W)$}
Suppose there exists an isotropic decomposition $W=W^{+}\oplus W^{-},$ i.e. $\{ W^{\pm}, W^{\pm}\}=0$ and $\{,\}$ restricted to $W^{+}\oplus W^{-}$ is non-degenerate. Then, the exterior algebra $\bigwedge W^{-}$ can be viewed as a $\bigwedge W^{-}$-module by taking wedge product on the left. This gives rise to the structure of an irreducible $C(W)$-module on $\bigwedge W^{-}$ by defining $$w^{+}.1=0,$$ for all $w^{+}\in W^{+}$ and $1\in \bigwedge W^{-}$. 

Next if $W=W'\oplus \mathbb{C}e$ is an orthogonal direct sum with $\{e,e\}=1$ and $W'$ has an isotropic decomposition of the form $W^{+}\oplus W^{-}$(we refer this as quasi-isotropic decomposition of W). Then, the $C(W')$-module $\bigwedge W^{-}$ described above becomes an irreducible $C(W)$-module by the following action: 
$$\sqrt{2}e.v:=\pm(-1)^pv \ \ \mbox{for} \ \ v\in \bigwedge^{p}W^{-}.$$

Any element of $W^{-}$(respectively $W^{+}$) is called a creation operator (respectively annihilation operator).

\subsubsection{Root Spaces and basis of $\mathfrak{so}(2r+1)$}
Consider a finite dimensional vector space $W_r$ of dimension $2r+1$ with a non-degenerate symmetric bilinear form $\{,\}$. Let $\{e_{i}\}_{i=-r}^r$ be an orthonormal basis of $W_r$. For $j>0$, we set
$$\phi^{j}=\frac{1}{\sqrt{2}}(e_j+\sqrt{-1}e_{-j}); \ \  \phi^{-j}=\frac{1}{\sqrt{2}}(e_j-\sqrt{-1}e_{-j})\hspace{.25cm}\mbox{and} \hspace{.25cm} \phi^0=e_0.$$
Let $\phi^1,\dots, \phi^r, \phi^0, \phi^{-r}, \dots, \phi^{-1}$ be the chosen ordered basis of $W_r$. For any $i,j$, we define 
$E^{i}_j(\phi^{k}):=\delta_{k,j}\phi^{i}$. 

 We identify the Lie algebra $\mathfrak{so}(2r+1)(W_r)$ with $\mathfrak{so}(2r+1)$ as follows:
$$\mathfrak{so}(2r+1):=\{A\in \mathfrak{sl}(2r+1)| A^T J + JA=0\},$$ where $J$ is the following $(2r+1)\times (2r+1)$ matrix:
$$J=\bordermatrix{\text{}& 1&\ldots&r &0&-r& \ldots &-1\cr
                      1  &   &   &  & &  & & 1 \cr
                   \vdots&   &  {\Large{0}}  &  & &  &  1    & \cr
                       r &   &     &  & &  1&        & \cr
                       0 &   &     &  & 1&  &        & \cr
                       -r &   &     &1  & &  &        & \cr
                   \vdots &   & 1  &  & &  &   {\Large{0}}    & \cr
                     -1 & 1  &     &  & &  &        &\cr}.$$
                       
We put $B^{i}_{j}=E^{i}_{j}-E^{-j}_{-i}$ and take the Cartan subalgebra $\mathfrak{h}$ to be the subalgebra of diagonal matrices. Clearly, 
$\mathfrak{h}=\oplus_{j=1}^r\mathbb{C}B^{j}_{j}$. The corresponding dual basis of $\mathfrak{h}^*$ is $L_j$, where $L_j(B^{k}_k)=\delta_{j,k}$. The simple positive roots $\{\alpha_i\}_{i=1}^r$ of $\mathfrak{so}(2r+1)$ are given by $L_1-L_2, \dots, L_{r-1}-L_r, L_r$. The root spaces of $\mathfrak{so}(2r+1)$ are of the form $\frg_{L_i\pm L_j}=\mathbb{C}B^{i}_{\mp j}$ and $\frg_{L_i}=\mathbb{C}B^{i}_0$.

\begin{remark}
The basis of the vector space $W_r$ chosen here is different than the basis in ~\cite{FH}. In this section, we prefer this basis as the branching formulas that we describe in the next section become simpler to state with this new notation. 
\end{remark}
\subsubsection{Spin module $\bigwedge W_{r}^{\mathbb{Z}+\frac{1}{2},-}$ of $\widehat{\mathfrak{so}}(2r+1)$}Consider as before $W_r$ to be a $2r+1$ dimensional complex vector space with a non-degenerate symmetric bilinear form $\{,\}$. Let $$W_{r}^{\pm}= \oplus_{i=1}^r\mathbb{C}\phi^{\pm}.$$ A quasi-isotropic decomposition of $W_r$ given by the following: 
$$W_r=W_r^{+}\oplus W_r^{-}\oplus \mathbb{C}\phi^0.$$
We define a new vector space $W_{r}^{\mathbb{Z}+\frac{1}{2}}$ with an inner product $\{,\}$ as follows:
$$W_{r}^{\mathbb{Z}+\frac{1}{2}}:=W_{r}\otimes t^{\frac{1}{2}}\mathbb{C}[t,t^{-1}]\hspace{.25cm} \mbox{with} \hspace{.25cm} \{w_1(a), w_2(b)\}=\{w_1,w_2\}\delta_{a+b,0},$$ where $w_1,w_2 \in W_r$; $a, b \in \mathbb{Z}+\frac{1}{2}$ and $w_1(a)=w_1\otimes t^a$. We choose a quasi-isotropic decomposition of $W_{r}^{\mathbb{Z}+\frac{1}{2}}$ given as follows:
$$W_{r}^{\mathbb{Z}+\frac{1}{2}}=W_{r}^{\mathbb{Z}+\frac{1}{2},+}\oplus W_{r}^{\mathbb{Z}+\frac{1}{2},-},$$ where 
$W_{r}^{\mathbb{Z}+\frac{1}{2},\pm}:=W_r\otimes t^{\pm \frac{1}{2}}\mathbb{C}[t^{\pm 1}]$.
We define the normal order ${}_o^o{\hspace{.5cm}}^o_{o}$ for $w_1(a), w_2(b)\in W_{r}^{\mathbb{Z}+\frac{1}{2}}$ by the following formula:
\[
{}_o^o{w_1(a)w_2(b)}^o_o = \left\{
\begin{array}{l l}
  -w_2(b)w_1(a) & \quad \text{if $ a>0>b$,}\\
  \frac{1}{2}(w_1(a)w_2(b) - w_2(b)w_1(a)) & \quad \text{if $a=b=0$,}\\
   w_1(a)w_2(b) & \quad \text{otherwise.}\\
\end{array} \right.
\]

We now describe the action of $\widehat{\mathfrak{so}}(2r+1)$ on $\bigwedge W_{r}^{\mathbb{Z}+\frac{1}{2},-}$ and explicitly describe the level one $\widehat{\mathfrak{so}}(2r+1)$-modules $\mathcal{H}_{0}(\mathfrak{so}(2r+1))$ and $\mathcal{H}_{1}(\mathfrak{so}(2r+1))$. For a proof, we refer the reader to ~\cite{Fa}.
\begin{proposition}\label{ff1}
The following map is a Lie algebra monomorphism:
\begin{eqnarray*}
\widehat{\mathfrak{so}}(2r+1)&\rightarrow& \operatorname{End}(\bigwedge W_{r}^{\mathbb{Z}+\frac{1}{2},-}),\\
B^{i}_{j}(m)&  \rightarrow & \sum_{a+b=m}{}^{0}_{0}\phi^{i}(a)\phi^{-j}(b){}^{0}_{0},\\
 c & \rightarrow & \operatorname{id}.
\end{eqnarray*}
\end{proposition}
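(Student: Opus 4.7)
The plan is to realize $\widehat{\mathfrak{so}}(2r+1)$ at level one by the standard neutral free fermion (Clifford-module) construction and then verify the bracket and injectivity. I will proceed in four steps.

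First I would check that the operators $T^{ij}(m) := \sum_{a+b=m}{}^0_0\phi^i(a)\phi^{-j}(b){}^0_0$ are well-defined endomorphisms of $\bigwedge W_r^{\mathbb{Z}+\frac{1}{2},-}$. The key point is that under the normal ordering, all factors $\phi^{\bullet}(c)$ with $c>0$ are moved to the right of those with $c<0$. Acting on a fixed wedge monomial $\phi^{i_1}(a_1)\wedge\dots\wedge\phi^{i_N}(a_N)$ with $a_k<0$, an annihilation operator $\phi^{\bullet}(c)$ with $c>0$ contributes only when $c\in\{-a_1,\dots,-a_N\}$; hence for fixed $m$ only finitely many terms in the defining sum are nonzero, so $T^{ij}(m)$ is well-defined. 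The image of $c$ as $\operatorname{id}$ is trivially well-defined.

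Next I would verify the bracket relations. The Clifford relations in the loop algebra are $\{\phi^i(a),\phi^j(b)\}=\delta_{i+j,0}\delta_{a+b,0}$. A direct Wick-type computation (splitting $\phi^i(a)\phi^{-j}(b)$ into the normal-ordered piece plus a $c$-number contraction coming from $\{,\}$) gives the standard bilinear commutation formula
\[
[T^{ij}(m),T^{kl}(n)] \;=\; \delta_{-j,k}\,T^{il}(m+n)\;-\;\delta_{i,-l}\,T^{kj}(m+n) \;+\; \kappa^{ij,kl}(m)\,\delta_{m+n,0}\,\mathrm{id},
\]
where the anomaly $\kappa^{ij,kl}(m)$ arises from the reordering needed to bring the result back into normal-ordered form. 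Matching with $B^i_j=E^i_j-E^{-j}_{-i}$, the linear part is exactly the image of $[B^i_j,B^k_l]$ under the putative map (the two contractions in the fermion calculation correspond to the two summands in the definition of $B^i_j$). For the anomaly, one checks case-by-case that $\kappa^{ij,kl}(m) = m\cdot(B^i_j,B^k_l)$ with $(,)$ the normalized Cartan-Killing form, which is the Res term $(X,Y)\operatorname{Res}_{\xi=0}(g\,df)\cdot c$ specialized to $f=\xi^m$, $g=\xi^n$. This gives the desired $\widehat{\mathfrak{so}}(2r+1)$-relation with central element acting as $\operatorname{id}$, i.e. at level one.

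Third, injectivity follows at once: the vacuum $1\in\bigwedge W_r^{\mathbb{Z}+\frac{1}{2},-}$ is annihilated by all $\phi^i(a)$ with $a>0$; applying $B^i_j(-m)$ for $m$ large enough to $1$ produces nonzero elements with distinct weights and distinct energy gradings, so no nontrivial element of the universal enveloping algebra of $\widehat{\mathfrak{so}}(2r+1)$ acts as zero on $\bigwedge W_r^{\mathbb{Z}+\frac{1}{2},-}$. It then suffices to note that the map is $\mathbb{Z}$-graded and nonzero on each weight space of $\widehat{\mathfrak{so}}(2r+1)$.

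The main obstacle is the bookkeeping in step two: getting the sign conventions and the normal-ordering anomaly $\kappa^{ij,kl}$ to match the correctly normalized Killing form $(\theta,\theta)=2$ on $\mathfrak{so}(2r+1)$, so that the central element $c$ acts by $1$ rather than by some other scalar. Once this combinatorial check is done, the rest is formal. I would follow the conventions of \cite{Fa} and \cite{H} to carry out this matching, noting that the quasi-isotropic decomposition including $\mathbb{C}\phi^0$ is what distinguishes the $B$-type from the $D$-type construction and is responsible for obtaining $\widehat{\mathfrak{so}}(2r+1)$ rather than $\widehat{\mathfrak{so}}(2r)$.
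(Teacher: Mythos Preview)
The paper does not actually prove this proposition; it simply states ``For a proof we refer the reader to~\cite{Fa}.'' Your outline is the standard free-fermion construction that one finds in \cite{Fa}, \cite{F1}, or \cite{H}, so in that sense your approach matches what the cited reference does.

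That said, two points in your sketch deserve tightening. In step two, your displayed commutator formula has only two linear terms, with Kronecker deltas $\delta_{-j,k}$ and $\delta_{i,-l}$. In fact the Wick calculation of $[\,{}^0_0\phi^i\phi^{-j}{}^0_0\,,\,{}^0_0\phi^k\phi^{-l}{}^0_0\,]$ produces \emph{four} single contractions (each $\phi$ in the first bilinear can pair with each $\phi$ in the second), and the pairing $\{\phi^{-j},\phi^k\}\propto\delta_{j,k}$, not $\delta_{-j,k}$. All four terms are needed to reproduce the $\mathfrak{so}$-commutator $[B^i_j,B^k_l]$; the antisymmetry $T^{ij}=-T^{-j,-i}$ (up to a central piece) is what collapses the $\mathfrak{gl}$-type answer to the orthogonal one. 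Your verbal gloss that ``the two contractions correspond to the two summands in $B^i_j$'' obscures this.

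In step three, your injectivity argument via ``distinct weights and energy gradings'' is more than you need and is stated imprecisely. A cleaner route: the kernel of a Lie algebra homomorphism is an ideal, and for $\widehat{\mathfrak{g}}$ with $\mathfrak{g}$ simple every proper ideal is contained in the center $\mathbb{C}c$; since $c\mapsto\operatorname{id}\neq 0$, the kernel is trivial.
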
 
\begin{proposition}Suppose $r \geq 1$, then the following are isomorphic as level one $\widehat{\mathfrak{so}}(2r+1)$-modules:
\begin{enumerate}
\item $\bigwedge^{even}(W_{r}^{\mathbb{Z}+\frac{1}{2},-})\simeq \mathcal{H}_{0}(\mathfrak{so}(2r+1)),$

\item $\bigwedge^{odd}(W_{r}^{\mathbb{Z}+\frac{1}{2},-})\simeq \mathcal{H}_{1}(\mathfrak{so}(2r+1)).$
\end{enumerate}
The highest weight vectors are given by $1$ and $\phi^{1}(-\frac{1}{2}).1$ respectively.
\end{proposition}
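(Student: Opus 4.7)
The plan is to identify each Fock space with the corresponding irreducible level-one integrable $\widehat{\mathfrak{so}}(2r+1)$-module in three steps: verifying the highest-weight-vector axioms for $1$ and $\phi^1(-\tfrac12).1$, checking the level-one integrability relations, and matching graded characters.

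First I would show that $1$ and $\phi^1(-\tfrac12).1$ are highest weight vectors of weights $0$ and $\omega_1=L_1$ respectively. Using the formula of Proposition~\ref{ff1}, every normal-ordered monomial in $B^i_j(m)$ for $m>0$ contains a fermion mode $\phi^\bullet(a)$ with $a>0$, which annihilates the vacuum; the same reasoning covers the positive-root modes $B^i_j(0)$ with $i+j\neq 0$. For the Cartan elements $B^i_i(0)$ the normal-ordered sum evaluates term by term, giving eigenvalue $0$ on $1$ and, on $\phi^1(-\tfrac12).1$, picking up a single nonzero contribution from the contraction $\{\phi^{-1}(\tfrac12),\phi^1(-\tfrac12)\}=1$ when $i=1$, so that the weight is $L_1=\omega_1$. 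The central charge is $1$ by construction of the map in Proposition~\ref{ff1}.

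Next I would verify the level-one integrability relations. For $\theta=L_1+L_2$ the root vector $X_\theta$ is $B^1_{-2}$, so what must be shown is $B^1_{-2}(-1)^2.1=0$ and $B^1_{-2}(-1).\phi^1(-\tfrac12).1=0$. In both cases, the only summand of $B^1_{-2}(-1)=\sum_{a+b=-1}{}^0_0\phi^1(a)\phi^2(b){}^0_0$ whose annihilation modes do not already kill the vector has $(a,b)=(-\tfrac12,-\tfrac12)$, because no Clifford contraction with the existing factor $\phi^1(-\tfrac12)$ is possible (the bracket pairs $\phi^i$ with $\phi^{-i}$). Iterating $B^1_{-2}(-1)$ in the vacuum case produces $\phi^1(-\tfrac12)\phi^2(-\tfrac12)\phi^1(-\tfrac12)\phi^2(-\tfrac12).1$, which vanishes by the Pauli relation $\phi^i(-\tfrac12)^2=0$; the $\omega_1$ case is similar. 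The cyclic $\widehat{\mathfrak{so}}(2r+1)$-submodules generated by $1$ and $\phi^1(-\tfrac12).1$ are therefore quotients of $\mathcal{H}_0(\mathfrak{so}(2r+1))$ and $\mathcal{H}_{\omega_1}(\mathfrak{so}(2r+1))$; injectivity of these comparison maps is immediate from irreducibility of the sources.

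Finally I would close the argument by matching graded characters. With $L_0$ assigning degree $n+\tfrac12$ to each generator $\phi^j(-n-\tfrac12)$, the fermionic Fock space has character $\prod_{n\ge 0}(1+q^{n+1/2})^{2r+1}$, which splits according to fermion parity as
\begin{equation*}
\mathrm{ch}\bigl(\textstyle\bigwedge^{even/odd}\bigr)=\tfrac12\Bigl(\prod_{n\ge 0}(1+q^{n+1/2})^{2r+1}\pm \prod_{n\ge 0}(1-q^{n+1/2})^{2r+1}\Bigr).
\end{equation*}
These are precisely the Kac--Weyl (Neveu--Schwarz sector) characters of the level-one $\widehat{\mathfrak{so}}(2r+1)$-modules $\mathcal{H}_0$ and $\mathcal{H}_{\omega_1}$, so the injections from the previous step are in fact isomorphisms. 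The main obstacle is this last character identity, whose honest derivation invokes a Jacobi-type theta-function identity; a character-free alternative is to prove irreducibility of $\bigwedge^{even}$ and $\bigwedge^{odd}$ directly by showing that any nonzero $\widehat{\mathfrak{so}}(2r+1)$-submodule must, after enough applications of positive modes, contain the unique lowest-$L_0$-weight vector of its parity.
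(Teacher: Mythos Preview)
The paper does not give its own proof of this proposition; immediately before stating it (and Proposition~\ref{ff1}) the author writes ``For a proof we refer the reader to~\cite{Fa}'' (the original source being Frenkel~\cite{F1}). So there is nothing in the paper to compare your argument against beyond that citation.

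Your outline is the standard one and is correct. The highest-weight and integrability checks in your first two steps are accurate: the key points---that $B^i_j(0)$ annihilates $1$ because every normal-ordered summand places an annihilation mode on the right, that the only surviving Cartan contribution on $\phi^1(-\tfrac12).1$ comes from the contraction $\{\phi^{-1}(\tfrac12),\phi^1(-\tfrac12)\}=1$, and that $(B^1_{-2}(-1))^2.1$ and $B^1_{-2}(-1).\phi^1(-\tfrac12).1$ vanish because only the $a=b=-\tfrac12$ summand survives and then Pauli exclusion applies---are all correctly identified. Your deduction that the cyclic submodules are isomorphic to $\mathcal H_0$ and $\mathcal H_{\omega_1}$ via irreducibility of the latter is also sound, given the presentation of $\mathcal H_\lambda$ recalled in Section~\ref{conformalblock}.

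The only soft spot, which you yourself flag, is the character identity in step three. What you wrote is the principally specialized character; to conclude equality of modules from equality of $q$-dimensions you need the Weyl--Kac character of $\mathcal H_0$ and $\mathcal H_{\omega_1}$ specialized the same way, and that identification (the Neveu--Schwarz product formula for $B_r^{(1)}$ at level one) is exactly the content of the Jacobi/Macdonald-type identity you mention. This is standard and is what Frenkel does, so citing~\cite{F1} or Kac~\cite{K} (Chapter~12) at that point is entirely appropriate. Your proposed alternative---proving irreducibility of $\bigwedge^{even/odd}$ directly by pushing any nonzero vector down to the unique $L_0$-minimal vector of its parity with positive modes---also works and is in some ways cleaner, since it avoids the character formula altogether; it is the argument one finds in treatments of Clifford vertex algebras.
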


\subsection{Highest weight vectors}
Let $W_s$ be a $2s+1$ dimensional vector space over $\mathbb{C}$ with a non-degenerate bilinear form $\{,\}$, and let $\{e_{p}\}_{p=1}^s$ be an orthonormal basis of $W_s$. Let $\phi^1,\dots, \phi^s,\phi^0,\phi^{-s},\dots,\phi^{-1}$ be an ordered isotropic basis of $W_s$. The tensor product of $W_d=W_r\otimes W_s$ carries a non-degenerate symmetric bilinear form $\{,\}$ given by the product of the forms on $W_r$ and $W_s$. Clearly the elements $\{e_{j,p}:=e_j\otimes e_p | -r\leq j\leq r \ \mbox{and}\  -s\leq p\leq s \}$ form an orthonormal basis of $W_d$. By $(j,p)>0$, we mean $j>0$ or $j=0, p>0$ and put 
$$\phi^{j,p}=\frac{1}{\sqrt{2}}(e_{j,p}-\sqrt{-1}e_{-j,-p}), \hspace{1cm} \phi^{-j,-p}=\frac{1}{\sqrt{2}}(e_{j,p}+\sqrt{-1}e_{-j,-p}),$$ 
 for $(j,p)>0$. The form $\{,\}$ on $W_d$ is given by the formula 
$$\{ \phi^{j,p},\phi^{-k,-q}\}=\delta_{j,k}\delta_{p,q}, \ \ \mbox{for} \ -r\leq j,k \leq r \ \mbox{and} \ -s\leq p,q \leq s.$$ Let as before $W_d^{\pm}=\bigoplus_{(j,p)>0}\mathbb{C}\phi^{\pm j,\pm p}$ and $\phi^{0,0}=e_{0,0}$. The quasi-isotropic decomposition of $W_d$ is given as follows: 
$$W_d=W_d^{+}\oplus W_d^{-}\oplus \mathbb{C}\phi^{0,0}.$$

We define the operators $E^{j,p}_{k,q}$ by 
$E^{j,p}_{k,q}(\phi^{i,l})=\delta_{i,k}\delta_{l,q}\phi^{j,p}$. We put $$B^{j,p}_{k,q}=E^{j,p}_{k,q}-E^{-k,-q}_{-j,-p}.$$ Consider the Cartan subalgebra $\mathfrak{H}$ of $\mathfrak{so}(2d+1)$ to be the subalgebra generated by the diagonal matrices. Clearly $\mathfrak{H}=\oplus_{(j,p)>0}\mathbb{C}B^{j,p}_{j,p}$. Let $\{L_{j,p}\}$ for $(j,p)>0$ be a dual basis. Thus $\mathfrak{H}^*=\oplus_{(j,p)>0}\mathbb{C}L_{j,p}.$
\subsubsection{Highest weight vectors as wedge product}
To every Young diagram in $\mathcal{Y}_{r,s}$, we associate an $(2r+1)\times (2s+1)$ matrix as follows. First, to every Young diagram $\lambda$, we associate a $(r\times s)$ matrix $Y(\lambda)$ as follows:
\[
Y(\lambda)_{i,j}=\left\{
\begin{array}{l l}
0&\quad \text{if $\lambda$ has a box in the $(i,j)$-th position,} \\
1&\quad \text{otherwise.}\\
\end{array}\right.
\]
Finally to $Y(\lambda)$, we associate the following matrix:
$$\widetilde{Y}(\lambda)=\bordermatrix{\text{}& 1&\ldots&s &0&-s& \ldots &-1\cr
                      1  &   &     &  & 1&  1&\ldots       &1 \cr
                   \vdots&   &  Y(\lambda)   &  &\vdots & \vdots & \ldots      &\vdots \cr
                       r &   &     &  & 1& 1 &            \ldots     &1 \cr
                       0 & 1  &  \ldots   & 1 &1 &1  &\ldots       &1 \cr
                       -r & \vdots  &  \vdots    & \vdots &\vdots  &\vdots   &  \vdots      &\vdots  \cr
                   \vdots &\vdots   &   \vdots  &\vdots  &\vdots  & \vdots & \vdots       & \vdots \cr
                     -1 &  1 &  1   & 1 &1 &1  &   1     &1 \cr}.$$

For $\lambda \in \mathcal{Y}_{r,s}$, let $\widetilde{Y}(\lambda)$ be the image of ${Y}(\lambda)$. We define the following operations on $\widetilde{Y}(\lambda)$ which produces a new matrix:
\begin{eqnarray}
\sigma^{L}(\widetilde{Y}(\lambda))_{j,p}&:=&\widetilde{Y}(\lambda)_{j,p}-\delta_{j,1}\delta_{\widetilde{Y}_{1,|p|},1},\\
\sigma^{R}(\widetilde{Y}(\lambda))_{j,p}&:=&\widetilde{Y}(\lambda)_{j,p}-\delta_{p,1}\delta_{\widetilde{Y}_{|j|,1},1}.
\end{eqnarray}
The following proposition in ~\cite{H} give highest weight vectors for the branching rules described in Section ~\ref{branching2}.
\begin{proposition}\label{highestweight} The vector $\big(\bigwedge_{\widetilde{y}_{j,p}=0}\phi^{j,p}(-\frac{1}{2})\big).1$ well defined up to a sign for each of the matrices $\widetilde{Y}(\lambda)$, $\sigma^{L}(\widetilde{Y}(\lambda))$, $\sigma^{R}(\widetilde{Y}(\lambda))$, $\sigma^L(\sigma^{R}(\widetilde{Y}(\lambda)))$ gives a highest weight vector of the components with highest weight $(\lambda, \lambda^T)$, $(\sigma(\lambda),\lambda^T)$, $(\lambda, \sigma(\lambda^T))$ and $(\sigma(\lambda),\sigma(\lambda^T))$
\end{proposition}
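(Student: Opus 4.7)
The plan is to verify, for each of the four matrices $\widetilde{Y}(\lambda)$, $\sigma^{L}(\widetilde{Y}(\lambda))$, $\sigma^{R}(\widetilde{Y}(\lambda))$, and $\sigma^{L}(\sigma^{R}(\widetilde{Y}(\lambda)))$, that the associated wedge $v := \bigwedge_{\widetilde{y}_{j,p}=0}\phi^{j,p}(-\frac{1}{2})\cdot 1$ is a non-zero highest weight vector for $\widehat{\mathfrak{so}}(2r+1)\oplus\widehat{\mathfrak{so}}(2s+1)$ acting on $\bigwedge W_{d}^{\mathbb{Z}+\frac{1}{2},-}$, carrying the claimed weight. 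Since Proposition \ref{branch1} asserts multiplicity one for every branching component, this identifies $v$ up to scalar as the highest weight vector of the specified component. Non-vanishing is immediate: $v$ is a wedge of pairwise distinct fermionic creation operators, hence a non-zero basis element of the Fock space.

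For the weight computation, I would express the $\mathfrak{so}((2r+1)(2s+1))$-basis vector $\phi^{j,p}$ in a basis of $W_r \otimes W_s$ that simultaneously diagonalizes $\mathfrak{h}_r \oplus \mathfrak{h}_s$, for instance the tensor product $\phi^j \otimes \phi^p$ of the isotropic bases of $W_r$ and $W_s$. A direct accounting then shows $\phi^{j,p}(-\frac{1}{2})\cdot 1$ carries the joint $(\mathfrak{h}_r,\mathfrak{h}_s)$-weight $(L_j, L_p)$, so summing the contributions over the boxes of $\lambda$ yields the total weight $(\lambda,\lambda^{T})$ in the first case. The three other cases correspond to the operations $\sigma^{L}$ and $\sigma^{R}$, which modify $\widetilde{Y}(\lambda)$ along its first row or column; these modifications implement the diagram automorphism $\sigma$ of $\widehat{\mathfrak{so}}(2r+1)$ or $\widehat{\mathfrak{so}}(2s+1)$ at the level of highest weights, yielding the weights $(\sigma\lambda, \lambda^{T})$, $(\lambda, \sigma\lambda^{T})$, and $(\sigma\lambda, \sigma\lambda^{T})$ respectively.

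For annihilation by the positive part of $\widehat{\mathfrak{so}}(2r+1)\oplus\widehat{\mathfrak{so}}(2s+1)$, I would split the argument. First, for positive Fourier modes $m \ge 1$: the quadratic formula for $B^{j,p}_{k,q}(m)$ from Proposition \ref{ff1} has only the term $a = b = \frac{1}{2}$ potentially acting non-trivially on $v$, since any fermion $\phi^{j,p}(a)$ with $a \ge \frac{3}{2}$ must anti-commute against a partner $\phi^{-j,-p}(-a)$ absent from the wedge (whose modes are all $-\frac{1}{2}$). For the surviving $a = b = \frac{1}{2}$ term, a direct contraction against the wedge factors — all of which carry indices $(j',p') > 0$ by construction — shows that for operators in the image of $\widehat{\mathfrak{so}}(2r+1)\oplus\widehat{\mathfrak{so}}(2s+1)$ the resulting expression vanishes. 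Second, for positive finite roots of $\mathfrak{so}(2r+1)$ (of the form $L_i \pm L_j$ with $i<j$, and $L_i$), the corresponding root vectors act as raising operators that shuffle the first index of the wedge factors; the shape of the Young diagram $\lambda$, whose columns are nested, together with the fermionic antisymmetry of the wedge, forces the raised terms to cancel in pairs. The same argument applied to the second index and to $\lambda^{T}$ handles $\mathfrak{so}(2s+1)$.

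The main obstacle is this annihilation step. The embedding $\mathfrak{so}(2r+1)\oplus\mathfrak{so}(2s+1)\hookrightarrow\mathfrak{so}((2r+1)(2s+1))$ realized as $A\otimes\mathrm{id}+\mathrm{id}\otimes B$ is not diagonal in the $\phi^{j,p}$ basis used to define the wedge (which was built from the orthonormal $e_{j,p}$ basis, not from the tensor product of the $\phi$-bases of $W_r$ and $W_s$). One must therefore carefully track the sign factors, normal-ordering corrections, and fermionic anti-commutation relations as the embedded operators move past each wedge factor. Once this bookkeeping is done cleanly, the combinatorics of Young diagrams finishes the proof.
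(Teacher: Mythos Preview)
The paper does not give its own proof of this proposition: it is quoted as a result of Hasegawa~\cite{H}, introduced with ``The following Proposition in~\cite{H} gives the highest weight vectors\ldots''. So there is no argument in the paper to compare against; the paper simply imports the statement and uses it. Your proposal is therefore not competing with an alternative proof but rather sketching what a direct verification would look like.

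As an outline, your plan is the right one and is essentially what Hasegawa carries out: compute the $(\mathfrak{h}_r\oplus\mathfrak{h}_s)$-weight of the monomial wedge, and then check annihilation by the positive part of $\widehat{\mathfrak{so}}(2r+1)\oplus\widehat{\mathfrak{so}}(2s+1)$, using multiplicity one from Proposition~\ref{branch1} to conclude. Your reduction of the affine annihilation to the single mode pair $a=b=\tfrac{1}{2}$ is correct, since the wedge contains only creation operators at depth $-\tfrac{1}{2}$.

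The one place where your sketch understates the difficulty is exactly the point you flag at the end. The isotropic basis $\phi^{j,p}$ of $W_d$ is built from the orthonormal tensors $e_j\otimes e_p$, not from $\phi^j\otimes\phi^p$; consequently the image of a simple root vector of $\mathfrak{so}(2r+1)$ under the embedding is a \emph{sum} of several $B^{j,p}_{k,q}$ in $\mathfrak{so}(2d+1)$, not a single one. Your phrase ``the raised terms cancel in pairs'' is the correct moral, but making it precise requires writing this change-of-basis explicitly and tracking which pairs of fermions each summand produces; only then does the Young-diagram combinatorics (nested rows/columns forcing repeated wedge factors) become visible. This is straightforward but not short, and it is the actual content of Hasegawa's computation. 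If you intend a self-contained proof rather than a citation, that passage needs to be written out.
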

Next, we describe the highest vectors for some of the components in the ``Kac-Moody" form. We use this explicit descriptions to prove the basic cases of the rank-level duality.
\subsubsection{Highest weight vectors in Kac-Moody form}
Let $\lambda,\lambda' \in \mathcal{Y}_{r,s}$ and assume that $\lambda$ is obtained from $\lambda' \in \mathcal{Y}_{r,s}$ by adding two boxes. In terms of the matrices described in Section 8.2.1, $Y(\lambda)$ is obtained from $Y(\lambda')$ by changing $1$ to $0$ in exactly two places of $Y(\lambda')$, say at $(a,b)$ and $(c,d)$. Assume that $(a,b)<(c,d)$ under the lexicographic ordering.

\begin{remark}
Let $V_{\lambda}$ be the finite dimensional $\frg$-module inside $\mathcal{H}_{\lambda}(\frg)$, where $\frg$ is a finite-dimensional semisimple Lie algebra. Every finite dimensional irreducible representation of $\frg$ has a lowest weight vector $v^{\lambda}$. This vector is a highest weight vector for the affine Lie algebra $\widehat{\frg}$ if we had chosen the opposite Borel as the Borel for $\frg$. We call the vector $v^{\lambda}$ as the opposite highest weight vector of $\mathcal{H}_{\lambda}(\frg)$.

\end{remark}
The following proposition describe highest weight vectors in the ``Kac-Moody" form, i.e. as elements of universal enveloping of $\widehat{\mathfrak{so}}(2d+1)$ acting on the highest weight vectors of $\mathcal{H}_{0}(\mathfrak{so}(2d+1))$ and $\mathcal{H}_{1}(\mathfrak{so}(2d+1))$.  
\begin{proposition}\label{Kacmoody}Let $\lambda$ and $\lambda'$ be as before. Then, the following holds:
\begin{enumerate}
\item  If $v_{\lambda'}\in \operatorname{End}(\bigwedge W_{d}^{\mathbb{Z}+\frac{1}{2},-})$ is the highest weight vector of the component $\mathcal{H}_{\lambda'}(\mathfrak{so}(2r+1))\otimes \mathcal{H}_{\lambda'^T}(\mathfrak{so}(2s+1))$, then the highest weight vector $v_{\lambda}$ of the component $\mathcal{H}_{\lambda}(\mathfrak{so}(2r+1))\otimes \mathcal{H}_{\lambda^T}(\mathfrak{so}(2s+1))$ is given by the following:
$$v_{\lambda}=B^{a,b}_{-c,-d}(-1).v_{\lambda'}.$$

\item If $v^{\lambda'}$ is the opposite highest weight vector of $\mathcal{H}_{\lambda'}(\mathfrak{so}(2r+1))\otimes \mathcal{H}_{\lambda'^T}(\mathfrak{so}(2s+1))$, then the opposite highest weight vector $v^{\lambda}$ of the component $\mathcal{H}_{\lambda}(\mathfrak{so}(2r+1))\otimes \mathcal{H}_{\lambda^T}(\mathfrak{so}(2s+1))$ is:
$$ v^{\lambda}=B^{-a,-b}_{c,d}(-1).v^{\lambda'}.$$
\end{enumerate}
\end{proposition}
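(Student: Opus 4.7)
\medskip

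\noindent\textbf{Proof plan for Proposition~\ref{Kacmoody}.}

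The strategy is to compute both sides directly in the spin realization $\bigwedge W_{d}^{\mathbb{Z}+\frac{1}{2},-}$ and verify the equality term by term. By Proposition~\ref{highestweight}, the highest weight vectors are the wedge products
\[
v_{\lambda'} \;=\; \Bigl(\bigwedge_{\widetilde{Y}(\lambda')_{j,p}=0} \phi^{j,p}(-\tfrac{1}{2})\Bigr).1,
\qquad
v_{\lambda} \;=\; \Bigl(\bigwedge_{\widetilde{Y}(\lambda)_{j,p}=0} \phi^{j,p}(-\tfrac{1}{2})\Bigr).1.
\]
Since $\lambda$ is obtained from $\lambda'$ by filling in the two additional $(a,b)$ and $(c,d)$ entries of $Y(\lambda')$ (so that $\widetilde{Y}(\lambda)$ has two more zeros than $\widetilde{Y}(\lambda')$, located at $(a,b)$ and $(c,d)$, both lying in the top-left $r\times s$ block), we obtain at once
\[
v_{\lambda} \;=\; \pm\, \phi^{a,b}(-\tfrac{1}{2})\,\phi^{c,d}(-\tfrac{1}{2})\, v_{\lambda'},
\]
the sign being dictated by the fixed ordering used to define the wedge.

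Next, I would expand $B^{a,b}_{-c,-d}(-1)$ using the embedding of Proposition~\ref{ff1}:
\[
B^{a,b}_{-c,-d}(-1) \;=\; \sum_{\alpha+\beta=-1}\; {}_o^o\, \phi^{a,b}(\alpha)\,\phi^{c,d}(\beta)\, {}_o^o, \qquad \alpha,\beta\in \mathbb{Z}+\tfrac{1}{2},
\]
and apply it to $v_{\lambda'}$. The claim reduces to showing that the only surviving term is the one with $(\alpha,\beta)=(-\tfrac12,-\tfrac12)$, for which both factors are creation operators and the normal ordering is trivial. The main obstacle, and the step requiring care, is checking the vanishing of all the other modes; here the key fact is the Clifford relation
\[
\{\phi^{j,p}(m),\,\phi^{k,q}(n)\} \;=\; \delta_{j,-k}\,\delta_{p,-q}\,\delta_{m+n,0}.
\]
For any $\alpha>0$, the operator $\phi^{a,b}(\alpha)$ can only fail to anticommute with a factor $\phi^{-a,-b}(-\alpha)$ of $v_{\lambda'}$; but $(-a,-b)$ lies outside the top-left $r\times s$ block, so $\widetilde{Y}(\lambda')_{-a,-b}=1$, meaning no such factor is present. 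Thus $\phi^{a,b}(\alpha)$ anticommutes past every factor of the wedge and annihilates the vacuum. The same argument rules out $\beta>0$ via $(-c,-d)$. Combined with the trivial normal ordering in the remaining case, this gives
\[
B^{a,b}_{-c,-d}(-1)\,v_{\lambda'} \;=\; \phi^{a,b}(-\tfrac12)\,\phi^{c,d}(-\tfrac12)\,v_{\lambda'} \;=\; \pm\, v_{\lambda},
\]
proving (1). (Nonvanishing is automatic: $(a,b)\neq (-c,-d)$ since both are strictly positive, so $\{\phi^{a,b},\phi^{c,d}\}=0$ and the two new creation operators wedge cleanly into $v_{\lambda'}$.)

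For part (2), I would repeat the same computation with the companion realization of the \emph{lowest} weight vector of the same component, which is built from the creation operators $\phi^{-j,-p}(-\tfrac12)$ indexed by the transposed/reflected positions. Replacing $\phi^{a,b}$ by $\phi^{-a,-b}$ and $\phi^{c,d}$ by $\phi^{-c,-d}$ throughout, the relevant raising operator becomes $B^{-a,-b}_{c,d}(-1)$, and exactly the same Clifford calculation -- now using that $\widetilde{Y}(\lambda')_{a,b}=\widetilde{Y}(\lambda')_{c,d}=1$ (since those boxes are not in $\lambda'$) to show that the positive-mode terms annihilate $v^{\lambda'}$ -- yields $B^{-a,-b}_{c,d}(-1)\,v^{\lambda'} = \pm\, v^{\lambda}$. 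The lexicographic assumption $(a,b)<(c,d)$ fixes the ordering and hence the sign convention consistently with Proposition~\ref{highestweight}.
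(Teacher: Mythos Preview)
Your proposal is correct and follows exactly the approach the paper indicates: the paper's proof is a single sentence stating that the result ``easily follows from Proposition~\ref{highestweight} and Proposition~\ref{ff1},'' and you have simply unpacked that computation in the spin module, expanding $B^{a,b}_{-c,-d}(-1)$ via the fermionic realization and checking that only the $(-\tfrac12,-\tfrac12)$ term survives. One small slip: in part~(2) the reason the positive-mode terms kill $v^{\lambda'}$ is that the required pairing factor $\phi^{a,b}(-\tfrac12)$ has \emph{positive} indices, whereas all factors of $v^{\lambda'}$ are of the form $\phi^{-j,-p}(-\tfrac12)$ with $(j,p)$ in the top-left block---this is the mirror of your part~(1) argument, not the condition $\widetilde{Y}(\lambda')_{a,b}=1$ you cite.
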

\begin{proof}
The proof of the above easily follows from Proposition ~\ref{highestweight} and Proposition ~\ref{ff1}. 
\end{proof}
\begin{remark}
There is no uniqueness in building a Young diagram $\lambda$ starting from the empty Young diagram. So there is no uniqueness in the expressions of the highest weight vectors described in Proposition ~\ref{Kacmoody}.
\end{remark}

\section{Proof of Theorem ~\ref{duality}}\label{mainproof}In this section, we give a proof of Theorem ~\ref{duality}. The main steps of the proof are summarized below.

\subsection{Key steps}The strategy of the proof of Theorem ~\ref{duality} closely follows ~\cite{A} and ~\cite{NT} but has some significant differences in the individual steps.
\subsubsection{Step I}We study the degeneration of rank-level duality maps on $\mathbb{P}^1$ with $n$ marked points. We use Proposition \ref{geometricinput} to reduce to the case for conformal blocks on $\mathbb{P}^1$ with three marked points and the representation attached to one of the marked points is $\omega_1$. The details of this step are explained in Section \ref{generalal}.

\subsubsection{Step II}We are now reduced to proving rank-level dualities for admissible pairs (see Definition \ref{admiss}) of the form $((\omega_1,\lambda_2,\lambda_3), (\omega_1,\beta_1,\beta_2))$. We use Proposition \ref{1dim} to determine which conformal blocks on $\mathbb{P}^1$ with three marked points with representations of the form $(\omega_1,\lambda_2,\lambda_3)$ are non-zero. 

\subsubsection{Step III}We use Proposition \ref{diaaut2} and further reduce to proving rank-dualities for three pointed curves and admissible pairs of the following forms:
\begin{enumerate}
\item  $(\omega_1,\lambda_2,\lambda_3), (\omega_1,\lambda_2^T,\lambda_3^T)$, where $\lambda_2, \lambda_3 \in \mathcal{Y}_{r,s}$ and $\lambda_2$ is obtained from $\lambda_3$ either by adding or deleting a box. Rank-level dualities for these cases are proved in Section \ref{minimalcases}.

\item $(\omega_1, \lambda, \lambda), (\omega_1,\lambda^T, \sigma(\lambda^T))$, where $\lambda \in \mathcal{Y}_{r,s}$ and $(\lambda, L_r)\neq 0$. 
These rank-level dualities are proved in Section \ref{reminimalcases}.
\end{enumerate}

\subsection{The minimal three point cases}\label{minimalcases}
In this section, we prove rank-level dualities for some special one dimensional conformal blocks on $\mathbb{P}^1$ with three marked points. We use these cases to prove the rank-level duality isomorphism in the general case. 

The finite dimensional irreducible $\mathfrak{so}(2d+1)$-module $V_{\omega_1}$ can be realized inside $\bigwedge^{odd}W_d^{\mathbb{Z}+\frac{1}{2},-}$ as linear span of vectors of the form $\phi^{i,j}(-\frac{1}{2})$. On $V_{\omega_1}$, there is a canonical $\mathfrak{so}(2d+1)$ invariant bilinear form $Q$ given by the following formula:
$$Q(\phi^{j,p}(-\frac{1}{2}),\phi^{-k,-q}(-\frac{1}{2}))=\delta_{j,k}\delta_{p,q}.$$ For notational convenience, we write $\phi^{i,j}(-\frac{1}{2})$ as $v^{i,j}$.

Throughout this section, we will assume that $(P_1,P_2, P_3)=(1,0,\infty)$ with coordinates $\xi_1=z-1$, $\xi_2=z$ and $\xi_3=\frac{1}{z}$, where $z$ is a global coordinate on $\mathbb{C}$. We denote the associated data by $\mathfrak{X}$. Let $\lambda_2,\lambda_3 \in \mathcal{Y}_{r,s}$, $\vec{\lambda}=(\omega_1,\lambda_2,\lambda_3)$, $\vec{\lambda}^T=(\omega_1,\lambda_2^T, \lambda_3^T)$, $\vec{\Lambda}=(\omega_1,\omega_1,0)$ and $\lambda_2$ is obtained from $\lambda_3$ by adding or deleting a box. 
\begin{remark}
The following strategy is influenced by the proof of Proposition 6.3 in ~\cite{A}.
\end{remark}
 Let us summarize our main steps to prove these minimal cases. Let $\langle {\Psi'}| \in \mathcal{V}^{\dagger}_{\vec{\Lambda}}(\mathfrak{X},\mathfrak{so}(2d+1),1)$ be a non-zero element. It is enough to produce $|\Phi_1\otimes \Phi_2 \otimes \Phi_3\rangle  \in \mathcal{H}_{\vec{\lambda}}(\mathfrak{so}(2r+1))\otimes \mathcal{H}_{\vec{\lambda}^T}(\mathfrak{so}(2s+1))$ such that 
$$\langle {\Psi'} | \Phi_1\otimes \Phi_2 \otimes \Phi_3\rangle \neq 0.$$
\subsubsection{Step I} We always choose $|\Phi_2\rangle$( respectively $|\Phi_3\rangle $) to be the highest (respectively opposite highest) weight vector of the integrable module with highest weight ($\lambda_2,\lambda_2^T$) (respectively ($\lambda_3, \lambda_3^T$)). 

\subsubsection{Step II} If $\lambda_3$ is obtained from $\lambda_2$ by adding a box in the $(a,b)$-th coordinate, then we choose $|\Phi_1\rangle$ to be $v^{a,b}$. If $\lambda_2$ is obtained from $\lambda_3$ by adding a box in the $(a,b)$-th coordinate, then we choose $|\Phi_1\rangle $ to be $v^{-a,-b}$. With this choice, it is clear that the $\mathfrak{H}$-weight of $|\Phi_1\otimes \Phi_2\otimes \Phi_3\rangle $ is zero.

\subsubsection{Step III}We use induction on $\operatorname{max}(|\lambda_2|,|\lambda_3|)$. The base cases of induction are proved in Section \ref{basic}. Assume that $|\lambda_2|=|\lambda_3|+1$. Let $\lambda_2' \in \mathcal{Y}_{r,s}$ be such that 
\begin{enumerate}
\item $\lambda_2$ is obtained by adding two boxes from $\lambda_2'$,
\item  $\lambda_3$ is obtained by adding a box to $\lambda_2'$. 
( The other case $|\lambda_3|=|\lambda_2|+1$ is handled similarly.)
\end{enumerate}

\subsubsection{Step IV}We use gauge symmetry (see Section 2) to reduce to the case for the admissible pair $((\omega_1,\lambda_2',\lambda_3), (\omega_1, \lambda_2'^T, \lambda_3^T))$. This is done in Proposition \ref{n=31}. Now $\operatorname{max}(|\lambda_2'|,|\lambda_3|)<|\lambda_2|$. The other case is handled similarly. Hence, we are done by induction.

\begin{remark}The minimal cases here are similar to the minimal cases in ~\cite{A}. In the case of symplectic rank-level duality, T. Abe (see \cite{A}) identified the rank-level duality map with the symplectic strange duality map and used the geometry of parabolic bundles with a symplectic form to show that rank-level duality maps are non-zero. As remarked earlier, we were not able to describe the map in Theorem ~\ref{duality} geometrically. However, the steps described above can be used to tackle minimal cases in ~\cite{A}. Similarly, one can use the same strategy to reprove the rank-level duality results in \cite{BP, NT}. 
\end{remark}
 
\subsubsection{The base cases for induction}\label{basic}
We think of $\mathbb{P}^1$ as $\mathbb{C}\cup\{ \infty\}$ and let $z$ be a global coordinate of $\mathbb{C}$. We will assume that $(P_1,P_2, P_3)=(1,0,\infty)$ with coordinates $\xi_1=z-1$, $\xi_2=z$ and $\xi_3=\frac{1}{z}$ respectively, and denote the associated data by $\mathfrak{X}$.
Further we let $\vec{\Lambda}=(\omega_1,\omega_1,0)$.

\begin{lemma}\label{01}
Let $\vec{\lambda}=(\omega_1,\omega_1,0)$. Then, the following map 
$$\mathcal{V}_{\vec{\lambda}}(\mathfrak{X},\mathfrak{so}(2r+1),2s+1)\otimes \mathcal{V}_{\vec{\lambda}^T}(\mathfrak{X},\mathfrak{so}(2s+1),2r+1) \rightarrow \mathcal{V}_{\vec{\Lambda}}(\mathfrak{X},\mathfrak{so}(2d+1),1),$$ is non-zero.
\end{lemma}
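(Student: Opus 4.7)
The plan is to produce an explicit element of the source whose image under the rank-level duality map pairs non-trivially with a generator of the target. All three conformal blocks in sight are one-dimensional: the target is $\mathcal{V}_{\omega_1,\omega_1,0}(\mathfrak{X},\mathfrak{so}(2d+1),1)$ as recorded just above, and the two source blocks are one-dimensional by Proposition \ref{1dim}, since the empty partition is obtained from $\omega_1$ by deleting its single box. Hence it suffices to exhibit explicit nonzero classes in the source coinvariants whose pairing with a chosen generator of the target is nonzero.

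By Proposition \ref{bea} the generator $\langle\Psi|$ of $\mathcal{V}^{\dagger}_{\omega_1,\omega_1,0}(\mathfrak{X},\mathfrak{so}(2d+1),1)$ is identified with the unique (up to scalar) $\mathfrak{so}(2d+1)$-invariant bilinear form $Q$ on $V_{\omega_1}\otimes V_{\omega_1}$, and the degree hypothesis is vacuous at level one (the only triple $(p,q,r)$ with $p+q+r>2$ and each entry at most one is $(1,1,1)$, and $V_0^1=0$). Analogously, the generators of the two source blocks correspond to the invariant forms $Q_r$, $Q_s$ on the standard representations. I would then choose
\begin{equation*}
|\Phi_r\rangle = \phi^{-1}(-\tfrac{1}{2}).|0\rangle_r \otimes \phi^{1}(-\tfrac{1}{2}).|0\rangle_r \otimes |0\rangle_r, \qquad
|\Phi_s\rangle = \phi^{-1}(-\tfrac{1}{2}).|0\rangle_s \otimes \phi^{1}(-\tfrac{1}{2}).|0\rangle_s \otimes |0\rangle_s,
\end{equation*}
both sitting in the finite-dimensional bottom layer $V_{\omega_1}\otimes V_{\omega_1}\otimes V_0$ of the respective modules. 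Pairing with the source generators yields $Q_r(\phi^{-1},\phi^{1})=Q_s(\phi^{-1},\phi^{1})=1\neq 0$, so $|\Phi_r\rangle$ and $|\Phi_s\rangle$ represent nonzero coinvariant classes.

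Finally, Proposition \ref{highestweight} identifies the image under the branching embedding of the highest weight vector of the component $\mathcal{H}_{\omega_1}\otimes\mathcal{H}_{\omega_1}$ inside $\mathcal{H}_{\omega_1}(\mathfrak{so}(2d+1))$ with $\phi^{1,1}(-\tfrac{1}{2}).|0\rangle$. Restricted to the finite-dimensional bottom layer this is an $\mathfrak{so}(2r+1)\oplus\mathfrak{so}(2s+1)$-equivariant map, and because $V_{\omega_1}\otimes V_{\omega_1}$ is irreducible as such a module, Schur's lemma forces it to be an isomorphism onto $V_{\omega_1}(\mathfrak{so}(2d+1))$ which, by uniqueness of invariants, pulls $Q$ back to a nonzero scalar multiple of $Q_r\otimes Q_s$. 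Hence $\langle\Psi|$ evaluated on the image of $|\Phi_r\rangle\otimes|\Phi_s\rangle$ is a nonzero multiple of $Q_r(\phi^{-1},\phi^{1})\cdot Q_s(\phi^{-1},\phi^{1})=1$, which is nonzero. This exhibits a nonzero element in the image of the rank-level duality map, and since source and target are both one-dimensional, the map is an isomorphism. The main conceptual obstacle is justifying the tensor-product identification on finite-dimensional layers; once Proposition \ref{highestweight} pins down one nonzero vector, irreducibility of $V_{\omega_1}\otimes V_{\omega_1}$ together with Schur's lemma handles the rest.
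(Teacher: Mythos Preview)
Your proof is correct and follows essentially the same line as the paper's: pick explicit vectors in the bottom layer $V_{\omega_1}\otimes V_{\omega_1}\otimes V_0$, use propagation of vacua to drop the third point, and evaluate the invariant bilinear form $Q$. The paper is slightly more direct: it works entirely inside the spin module for $\mathfrak{so}(2d+1)$, taking $|\Phi_1\rangle=\phi^{-1,-1}(-\tfrac12)$, $|\Phi_2\rangle=\phi^{1,1}(-\tfrac12)$, $|\Phi_3\rangle=1$ (these already lie in the branching component by Proposition~\ref{highestweight}) and computes $Q(\phi^{-1,-1},\phi^{1,1})=1$ on the nose, whereas you pass through the separate spin modules for $\mathfrak{so}(2r+1)$ and $\mathfrak{so}(2s+1)$ and invoke Schur's lemma to identify the pullback of $Q$ with $Q_r\otimes Q_s$; this is a minor detour to the same computation. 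Your preliminary check that $|\Phi_r\rangle$, $|\Phi_s\rangle$ are nonzero coinvariant classes is also unnecessary: once the image under the duality map is nonzero, that is automatic.
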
	
\begin{proof}
Let $\langle {\Psi'}| \in \mathcal{V}^{\dagger}_{\vec{\Lambda}}(\mathfrak{X},\mathfrak{so}(2d+1),1)$ be a non-zero element and $\langle \Psi|$ is the image of $\langle {\Psi'}|$ under the propagation of vacua. It is enough to produce $|\Phi_1\otimes \Phi_2 \otimes \Phi_3\rangle  \in \mathcal{H}_{\vec{\lambda}}(\mathfrak{so}(2r+1))\otimes \mathcal{H}_{\vec{\lambda}^T}(\mathfrak{so}(2s+1))$ such that 
$$\langle {\Psi'} | \Phi_1\otimes \Phi_2 \otimes \Phi_3\rangle \neq 0.$$
We choose $|\Phi_1\rangle =v^{-1,-1}$, $|\Phi_2\rangle =v^{1,1}$ and $|\Phi_3\rangle =1$. By propagation of vacua, we get 
\begin{eqnarray*}
\langle \Psi'| v^{-1,-1}\otimes v^{1,1}\otimes 1\rangle &=&\langle \Psi| v^{-1,-1}\otimes v^{1,1}\rangle,\\
&=&Q(v^{-1,-1},v^{1,1}),\\
&=&1.
\end{eqnarray*}
\end{proof}
\begin{lemma}\label{12}
Let $\vec{\lambda}=(\omega_1,\omega_1, 2\omega_1)$ or $(\omega_1,\omega_1,\omega_2)$. Then, the following map 
$$\mathcal{V}_{\vec{\lambda}}(\mathfrak{X},\mathfrak{so}(2r+1),2s+1)\otimes \mathcal{V}_{\vec{\lambda}^T}(\mathfrak{X},\mathfrak{so}(2s+1),2r+1) \rightarrow \mathcal{V}_{\vec{\Lambda}}(\mathfrak{X},\mathfrak{so}(2d+1),1),$$ is non-zero.
\end{lemma}
\begin{proof}
First let $\vec{\lambda}=(\omega_1,\omega_1, 2\omega_1)$. We choose $|\Phi_3\rangle $ to be the opposite highest weight vector of the module $\mathcal{H}_{2\omega_1}(\mathfrak{so}(2r+1))\otimes \mathcal{H}_{\omega_2}(\mathfrak{so}(2s+1))$, $|\Phi_2\rangle = v^{1,1}$. We choose $|\Phi_1\rangle$ such that the $\mathfrak{H}$-weight of $|\Phi_1\otimes \Phi_2\otimes \Phi_3\rangle $ is zero. In this case, $|\Phi_1\rangle = v^{1,2}$. By gauge symmetry, we get the following:
\begin{eqnarray*} 
&&\langle \Psi'| v^{1,2}\otimes v^{1,1}\otimes B^{-1,-1}_{1,2}(-1).1\rangle\\
&& =\langle \Psi'| v^{1,2}\otimes v^{1,1}\otimes B^{-1,-1}_{1,2}(\frac{1}{\xi_3}).1\rangle,\\
&& =-\langle \Psi'| B^{-1,-1}_{1,2}v^{1,2}\otimes v^{1,1}\otimes1\rangle -\langle \Psi'| v^{1,2}\otimes B^{-1,-1}_{1,2}(1).v^{1,1} \otimes1\rangle,\\
&&=-\langle \Psi'| v^{-1,-1}\otimes v^{1,1}\otimes 1\rangle \ \  \mbox{[Since $B^{-1,-1}_{1,2}(1).v^{1,1}=0$]}, \\
&&\neq 0.\ \ \mbox{[By Lemma \ref{01}]}
\end{eqnarray*} 
The case $\vec{\lambda}=(\omega_1,\omega_1,\omega_2)$ follows similarly.

\end{proof}
\begin{lemma}\label{23}Let $\vec{\lambda}=(\omega_1,\omega_1+\omega_2, 2\omega_1)$ or $(\omega_1,\omega_1+\omega_2,\omega_2)$. Then, the following map: 
$$\mathcal{V}_{\vec{\lambda}}(\mathfrak{X},\mathfrak{so}(2r+1),2s+1)\otimes \mathcal{V}_{\vec{\lambda}^T}(\mathfrak{X},\mathfrak{so}(2s+1),2r+1) \rightarrow \mathcal{V}_{\vec{\Lambda}}(\mathfrak{X},\mathfrak{so}(2d+1),1),$$ is non-zero.  
\end{lemma}
\begin{proof}
Consider $\lambda'_2=\omega_1$ and $\lambda_2=\omega_1+\omega_2$ and let $\lambda_2$ is obtained from $\lambda'_2$ by adding two boxes in the $(1,2)$ and $(2,1)$ coordinate. Thus, by Proposition \ref{Kacmoody}, we get 
$$v_{\lambda_2}=B^{1,2}_{-2,-1}(-1)v^{1,1}.$$ 

As in Lemma \ref{12}, the vector $|\Phi_3\rangle = B^{-1,-1}_{1,2}(-1).1$. We choose $|\Phi_2\rangle =v_{\lambda_2}$ and $|\Phi_1 \rangle $ such that the $\mathfrak{H}$-weight of $|\Phi_1\otimes \Phi_2\otimes \Phi_2\rangle $ is zero. In this case, $|\Phi_1\rangle =v^{-2,-1}$. By gauge symmetry, we get the following:

\begin{eqnarray*} 
&&\langle \Psi'| v^{-2,-1}\otimes B^{1,2}_{-2,-1}(-1)v^{1,1}\otimes B^{-1,-1}_{1,2}(-1).1\rangle\\
&&\ \ \  =-\langle \Psi'| B^{1,2}_{-2,-1}v^{-2,-1}\otimes v^{1,1}\otimes B^{-1,-1}_{1,2}(-1).1\rangle\\
&& \ \ \ \ \ \ \  \ -\langle \Psi'| v^{-2,-1}\otimes v^{1,1}\otimes B^{1,2}_{-2,-1}(1) B^{-1,-1}_{1,2}(-1).1\rangle, \\
&&\ \ \  =-\langle \Psi'| B^{1,2}_{-2,-1}v^{-2,-1}\otimes v^{1,1}\otimes B^{-1,-1}_{1,2}(-1).1\rangle\\
&&\ \ \ \ \ \ \ \ \ -\langle \Psi'| v^{-2,-1}\otimes v^{1,1}\otimes B^{-1,-1}_{1,2}(-1)B^{1,2}_{-2,-1}(1) .1\rangle\\
&& \ \ \ \ \ \ \ \ \ \ -\langle \Psi'| v^{-2,-1}\otimes v^{1,1}\otimes [B^{1,2}_{-2,-1}(1),B^{-1,-1}_{1,2}(-1)] .1\rangle, \\
&&\ \ \ =-\langle \Psi'| v^{1,2}\otimes v^{1,1}\otimes B^{-1,-1}_{1,2}(-1).1\rangle,\\
&&\ \ \ \neq 0.\ \ \mbox{[By Lemma \ref{12}]}
\end{eqnarray*}

\end{proof}

\subsubsection{The inductive step}
\begin{proposition}\label{n=31}
Let $|\lambda_2|=|\lambda_3|+1$ and $\lambda_2$ be obtained from $\lambda_3$ by adding a box in the $(c,d)$-th coordinate. Further, assume that $\lambda_3$ is obtained from $\lambda_2'$ by adding a box in the $(a,b)$-th coordinate. Then, the rank-level duality isomorphism for the admissible pair $((\omega_1,\lambda_2', \lambda_3),(\omega_1, \lambda_2'^T,\lambda_3^T))$ implies rank-level duality isomorphism for the following admissible pair $((\omega_1,\lambda_2, \lambda_3),(\omega_1, \lambda_2^T,\lambda_3^T))$.
\end{proposition}

\begin{proof}Without loss of generality assume that $(a,b)<(c,d)$. Consider a non-zero element $\langle \Psi'| \in \mathcal{V}_{\vec{\Lambda}}^{\dagger}(\mathfrak{X},\mathfrak{so}(2d+1),1)$. We choose $|\Phi_1\rangle=v^{-a,-b}$, $|\Phi_2\rangle =B^{a,b}_{-c,-d}(-1)v_{\lambda_2'}$ and $|\Phi_3\rangle$ to be the opposite highest weight vector of the component with highest weight $(\lambda_3, \lambda_3^T)$. Then, we have the following: 
\begin{eqnarray*}
&&\langle \Psi' |\Phi_1\otimes \Phi_2\otimes \Phi_3\rangle \\
&&=\langle \Psi' | v^{-a,-b}\otimes B^{a,b}_{-c,-d}(-1)v_{\lambda_2'}\otimes \Phi_3\rangle, \\
&&=-\langle \Psi' | B^{a,b}_{-c,-d}v^{-a,-b}\otimes v_{\lambda_2'}\otimes \Phi_3\rangle \\
&& \ \ \ \ \ \ -\langle \Psi' | v^{-a,-b}\otimes v_{\lambda_2'}\otimes B^{a,b}_{-c,-d}(1)\Phi_3\rangle, \\
&&=\langle \Psi' | v^{c,d}\otimes v_{\lambda_2'}\otimes \Phi_3\rangle. \ \ \text{( By Lemma ~\ref{vanish1} below)}
\end{eqnarray*}
The last expression is exactly the one that we consider to prove the rank-level duality for the admissible pair $((\omega_1,\lambda_2', \lambda_3),(\omega_1, \lambda_2'^T,\lambda_3^T))$. Hence we are done.
\end{proof}
\begin{lemma}\label{vanish1}With the above notation, we have the following:
 $$B^{a,b}_{-c,-d}(1)|\Phi_3\rangle=0. $$
\end{lemma}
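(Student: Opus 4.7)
The plan is a direct computation in the spin-module realization from Section~8. Using Proposition~\ref{ff1}, I would expand the affine Lie algebra element as a normal-ordered sum of Clifford operators
\[
B^{a,b}_{-c,-d}(1)=\sum_{\substack{m+n=1\\ m,n\in\mathbb{Z}+\tfrac{1}{2}}}{}_o^o\,\phi^{a,b}(m)\,\phi^{c,d}(n)\,{}_o^o.
\]
On the other hand, since $|\Phi_3\rangle$ is the lowest weight vector of the component $\mathcal{H}_{\lambda_3}(\mathfrak{so}(2r+1))\otimes\mathcal{H}_{\lambda_3^T}(\mathfrak{so}(2s+1))$, applying the longest Weyl element of $\mathfrak{so}(2r+1)\oplus\mathfrak{so}(2s+1)$ to the highest weight vector from Proposition~\ref{highestweight} realizes $|\Phi_3\rangle$, up to a nonzero scalar, as the wedge product
\[
|\Phi_3\rangle=\Bigl(\bigwedge_{(i,j)\in\,\mathrm{boxes}(\lambda_3)}\phi^{-i,-j}(-\tfrac{1}{2})\Bigr)\cdot 1,
\]
where $\mathrm{boxes}(\lambda_3)=\{(i,j):1\le i\le r,\,1\le j\le\lambda_3^i\}$.

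The combinatorial key is that, by hypothesis, $\lambda_2$ is obtained from $\lambda_3$ by adding a box at position $(c,d)$, so $(c,d)$ is \emph{not} a box of $\lambda_3$. Consequently the Clifford generator $\phi^{-c,-d}(-\tfrac{1}{2})$ does not appear as a factor in the wedge above.

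With these two ingredients I would check, term by term, that every summand of $B^{a,b}_{-c,-d}(1)$ kills $|\Phi_3\rangle$. The Clifford relation
\[
\{\phi^{x,y}(m),\phi^{-i,-j}(-\tfrac{1}{2})\}=\delta_{m,1/2}\,\delta_{(x,y),(i,j)}
\]
implies that whenever $m\ge 3/2$ (which by the normal-ordering convention gives the summand $-\phi^{c,d}(n)\phi^{a,b}(m)$ with $n=1-m\le -1/2$), the operator $\phi^{a,b}(m)$ fully anti-commutes through every creation factor of $|\Phi_3\rangle$ and then annihilates the vacuum $1$, since $\phi^{a,b}(m)\in W_d^{\mathbb{Z}+\frac12,+}$; the symmetric case $n\ge 3/2$ is handled the same way using $\phi^{c,d}(n)$. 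The only remaining summand is $\phi^{a,b}(\tfrac{1}{2})\phi^{c,d}(\tfrac{1}{2})$, and here the inner factor $\phi^{c,d}(\tfrac{1}{2})$ already annihilates $|\Phi_3\rangle$: it anti-commutes past every $\phi^{-i,-j}(-\tfrac{1}{2})$ in the wedge (because $(c,d)\ne(i,j)$ for each box of $\lambda_3$) and then kills $1$.

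I expect the only point requiring care to be the sign-bookkeeping in writing down the lowest weight vector explicitly and in the normal-ordering expansion; once those signs are pinned down, the Clifford-algebra cancellations are automatic.
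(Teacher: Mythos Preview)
Your argument is correct and takes a genuinely different route from the paper. The paper writes the lowest weight vector $|\Phi_3\rangle$ in ``Kac--Moody form'' via Proposition~\ref{Kacmoody}(2), i.e.\ as a product of affine Lie algebra elements $B^{-a,-b}_{e,f}(-1)\prod_{\alpha\in I}X_{-\alpha}(-1)\cdot 1$, and then computes $B^{a,b}_{-c,-d}(1)|\Phi_3\rangle$ by repeatedly commuting the positive-mode element past each factor using the affine bracket relations, checking that every commutator that arises vanishes on $1$. Your approach bypasses this: you work directly in the Clifford module $\bigwedge W_d^{\mathbb{Z}+\frac12,-}$, write $|\Phi_3\rangle$ explicitly as the wedge $\bigwedge_{(i,j)\in\lambda_3}\phi^{-i,-j}(-\tfrac12)\cdot 1$, expand $B^{a,b}_{-c,-d}(1)$ via Proposition~\ref{ff1}, and observe that the only potentially nonvanishing summand $\phi^{a,b}(\tfrac12)\phi^{c,d}(\tfrac12)$ dies because $(c,d)\notin\lambda_3$. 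This is shorter and more transparent: the combinatorial content (that the added box $(c,d)$ is absent from $\lambda_3$) is isolated in a single line, whereas the paper's argument hides it inside the structure of the root-space operators $X_{-\alpha}$ and the condition $(L_{a,b},\alpha)=0$. The paper's method, on the other hand, stays within the affine Lie algebra and would generalize more readily to situations where no fermionic realization is available.
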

\begin{proof}Since $|\lambda_3|$ is even, the opposite highest weight vector $|\Phi_3\rangle$ can be chosen to be of the form $B^{-a,-b}_{e,f}(-1)v$. Moreover $v$ has the form $\prod_{\alpha \in I} X_{-\alpha}(-1).1$ such that $(L_{a,b},\alpha)=0$, where $I$ is a subset of positive root of $\mathfrak{so}(2d+1)$ and $X_{-\alpha}$ is a non-zero element in the weight space of the negative root $-\alpha$.
\begin{eqnarray*}
B^{a,b}_{-c,-d}(1)|\Phi_3\rangle&=&B^{a,b}_{-c,-d}(1)B^{-a,-b}_{e,f}(-1)v,\\
&=&B^{-a,-b}_{e,f}(-1)B^{a,b}_{-c,-d}(1)v + [B^{a,b}_{-c,-d}(1), B^{-a,-b}_{e,f}(-1)]v,\\
&=& B^{-a,-b}_{e,f}(-1)B^{a,b}_{-c,-d}(1)\prod_{\alpha \in I}X_{-\alpha}(-1).1 + [B^{a,b}_{-c,-d}, B^{-a,-b}_{e,f}]\prod_{\alpha \in I}X_{-\alpha}(-1).1,\\
&=& B^{-a,-b}_{e,f}(-1)\big(\prod_{\alpha \in I}X_{-\alpha}(-1)\big)B^{a,b}_{-c,-d}(1).1 \\
&& + \big(\prod_{\alpha \in I}X_{-\alpha}(-1)\big)[B^{a,b}_{-c,-d}, B^{-a,-b}_{e,f}].1,\\
&=&0.
\end{eqnarray*}
 Hence the lemma follows.

\end{proof}
The proof of the following proposition is similar to the proof of Proposition 9.6 and tackles the case $|\lambda_3|=|\lambda_2|+1$.
\begin{proposition}
Let $|\lambda_3|=|\lambda_2|+1$ and $\lambda_3$ is obtained from $\lambda_2$ by adding a box in the $(c,d)$-th coordinate. Further, assume that $\lambda_2$ is obtained from $\lambda_3'$ by adding a box in the $(a,b)$-th coordinate. Then, the rank-level duality isomorphism for the admissible pair $((\omega_1,\lambda_2, \lambda_3'),(\omega_1, \lambda_2^T,\lambda_3'^T))$ implies rank-level duality isomorphism for the following admissible pair $((\omega_1,\lambda_2, \lambda_3),(\omega_1, \lambda_2^T,\lambda_3^T))$.
\end{proposition}
\subsection{The remaining three point cases}\label{reminimalcases}As before, we will assume that $(P_1,P_2, P_3)=(1,0,\infty)$ with coordinates $\xi_1=z-1$, $\xi_2=z$ and $\xi_3=\frac{1}{z}$. We denote the associated data by $\mathfrak{X}$. Let $\vec{\lambda}=(\omega_1,\lambda,\lambda)$, $\vec{\Lambda}=(\omega_1,\omega_1,0)$, where $\lambda \in \mathcal{Y}_{r,s}$ such that $(\lambda, L_r) \neq 0$. The proof of the next proposition follows the same pattern as the proof of  Proposition \ref{n=31}. We give a proof of the first part of the proposition for completeness.

\begin{proposition}
 The following maps are non-zero:
\begin{enumerate} 
\item  $\mathcal{V}_{\vec{\lambda}}(\mathfrak{X},\mathfrak{so}(2r+1),2s+1)\otimes \mathcal{V}_{\vec{\lambda}^T}(\mathfrak{X},\mathfrak{so}(2s+1),2r+1)\rightarrow \mathcal{V}_{\vec{\Lambda}}(\mathfrak{X},\mathfrak{so}(2d+1),1),$\\ where $|\lambda|$ is odd and $\vec{\lambda}^T=(\omega_1, \lambda^T,\sigma(\lambda^T))$.\\
\item $\mathcal{V}_{\vec{\lambda}}(\mathfrak{X},\mathfrak{so}(2r+1),2s+1)\otimes \mathcal{V}_{\vec{\lambda}^T}(\mathfrak{X},\mathfrak{so}(2s+1),2r+1)\rightarrow \mathcal{V}_{\vec{\Lambda}}(\mathfrak{X},\mathfrak{so}(2d+1),1),$ \\ where $|\lambda|$ is even and $\vec{\lambda}^T=(\omega_1, \sigma(\lambda^T),\lambda^T)$.
\end{enumerate}

\end{proposition}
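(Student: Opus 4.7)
The plan is to induct on $|\lambda|$ in the spirit of Proposition~\ref{n=31}, exploiting the fact that the hypothesis $(\lambda, L_r) \neq 0$ — equivalently, that $\lambda$ has $r$ non-empty rows — forces a particularly clean form for the highest weight vectors of the relevant branching components. A direct computation with the formula for $\sigma$ acting on $P_{2r+1}(\mathfrak{so}(2s+1))$ gives $\sigma(\lambda^T) - \lambda^T = L_1^{(s)}$ precisely under this hypothesis, and then the operation $\sigma^R$ in Proposition~\ref{highestweight} modifies $\widetilde Y(\lambda)$ only at the $(0,1)$ entry. Consequently, the highest weight vector of $\mathcal H_\lambda(\mathfrak{so}(2r+1)) \otimes \mathcal H_{\sigma(\lambda^T)}(\mathfrak{so}(2s+1))$ is $\phi^{0,1}(-\tfrac12)\wedge v_\lambda$ up to sign, and its lowest weight counterpart is $u^\lambda := \phi^{0,-1}(-\tfrac12) \wedge v^\lambda$, where $v_\lambda$ and $v^\lambda$ are the highest and lowest weight vectors of $\mathcal H_\lambda(\mathfrak{so}(2r+1)) \otimes \mathcal H_{\lambda^T}(\mathfrak{so}(2s+1))$.

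For case~(1) I would take $|\Phi_1\rangle = \phi^{0,1}(-\tfrac12)$ at $P_1$, $|\Phi_2\rangle = v_\lambda$ at $P_2$, and $|\Phi_3\rangle = u^\lambda$ at $P_3$; the total $\mathfrak H$-weight then vanishes. For the inductive step, select a pair of removable boxes $(a,b),(c,d)$ of $\lambda$ with $a,c < r$ so that $\lambda' := \lambda \setminus \{(a,b),(c,d)\}$ remains in $\mathcal Y_{r,s}$ with $(\lambda',L_r) \neq 0$. Proposition~\ref{Kacmoody} gives $v_\lambda = B^{a,b}_{-c,-d}(-1) v_{\lambda'}$, and a short Clifford computation extends this to $u^\lambda = B^{-a,-b}_{c,d}(-1) u^{\lambda'}$. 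Applying gauge symmetry with $f(z) = 1/z$ to $B^{a,b}_{-c,-d}$ transfers the $(-1)$-mode at $P_2$ to $P_1$ and $P_3$; the contribution at $P_1$ vanishes because $B^{a,b}_{-c,-d}\phi^{0,1} = 0$ in $V_{\omega_1}(\mathfrak{so}(N))$ and higher modes annihilate $\phi^{0,1}(-\tfrac12)\cdot 1$ by grading, while at $P_3$ one has $B^{a,b}_{-c,-d}(1) u^\lambda = [B^{a,b}_{-c,-d}(1),B^{-a,-b}_{c,d}(-1)] u^{\lambda'}$ (since $B^{a,b}_{-c,-d}(1)$ annihilates the grade-$0$ vector $u^{\lambda'}$), and the affine commutator evaluates on $u^{\lambda'}$ as the positive scalar $(\lambda')^a + (\lambda')^c + (\lambda'^T)^b + (\lambda'^T)^d + \delta_{b,1} + \delta_{d,1} + 1$, with the final $+1$ coming from the central-charge term at level one together with the long-root pairing $(B^{a,b}_{-c,-d}, B^{-a,-b}_{c,d}) = 1$. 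Hence $\langle \Psi'|\Phi_1 \otimes v_\lambda \otimes u^\lambda\rangle$ is a nonzero scalar multiple of $\langle \Psi'|\Phi_1 \otimes v_{\lambda'} \otimes u^{\lambda'}\rangle$, closing the induction.

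The base cases consist of the minimal $\lambda$ with $r$ non-empty rows and the prescribed parity of $|\lambda|$ — for instance $\lambda = (1,1,\dots,1)$ when $r$ is odd, or $\lambda = (2,1,\dots,1)$ when $r$ is even, in case~(1) — and are handled by explicit commutator computations unwinding the columns of $v_\lambda$ and $v^\lambda$ one box at a time, in the same spirit as Lemmas~\ref{01}--\ref{23}. Case~(2) is proved by the symmetric argument with $P_2$ and $P_3$ swapped on the $\mathfrak{so}(2s+1)$ side and $\phi^{0,1}(-\tfrac12)$ replaced by $\phi^{0,-1}(-\tfrac12)$ at $P_1$. The main obstacle I foresee is the box-selection at each inductive step: when $\lambda^r = 1$ and $\lambda$ is nearly rectangular, no pair of removable corners lies off row~$r$, and one must either absorb such shapes into the base case or first conjugate by an auxiliary Kac--Moody operator to move a box into row~$r$ before applying the main reduction.
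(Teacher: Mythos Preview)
Your inductive scheme is workable in outline, but it takes a harder road than necessary and contains two concrete errors.

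First, the scalar you compute for the affine commutator is wrong. Since $(a,b),(c,d)\notin\lambda'$ and $a,c\geq 1$ force the Cartan part $-B^{a,b}_{a,b}-B^{c,d}_{c,d}$ to act by zero on $u^{\lambda'}$, the entire contribution comes from the central term $(B^{a,b}_{-c,-d},B^{-a,-b}_{c,d})\cdot c$, which is $-1$ at level one (not the sum of row and column lengths you wrote). The conclusion ``nonzero'' survives, so this does not kill the argument, but the stated value is incorrect.

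Second, the box-selection obstruction you flag at the end is not merely a corner case: it applies to every $\lambda$ with $\lambda^r=1$ and a single removable corner off row~$r$ (e.g.\ $\lambda=(k,1,\dots,1)$ with $k\leq 2$, or more generally any hook-like shape), and your proposed remedies (``absorb into the base case'' or ``conjugate by an auxiliary operator'') are not carried out. As written the induction is incomplete.

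The paper avoids both issues with a single, cleaner move. Rather than inducting \emph{within} the $\sigma$-twisted family, it observes that the lowest weight vector of the component $(\lambda,\sigma(\lambda^T))$ can be written as $B^{0,-1}_{r,a}(-1)\,v^{\lambda'}$, where $\lambda'=\lambda\setminus\{(r,a)\}$ with $a=\lambda^r$ --- this is precisely the effect of your $\sigma^R$ together with removing the last box of row~$r$. Gauge symmetry then moves $B^{0,-1}_{r,a}(-1)$ from $P_3$; at $P_2$ it annihilates $v_\lambda$ (by the analogue of Lemma~\ref{vanish1}), and at $P_1$ it sends $\phi^{0,1}(-\tfrac12)$ to $\phi^{-r,-a}(-\tfrac12)$. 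One lands on
\[
\langle\Psi'\,|\,\phi^{-r,-a}(-\tfrac12)\otimes v_\lambda\otimes v^{\lambda'}\rangle,
\]
which is exactly the quantity already proved nonzero in Section~\ref{minimalcases} for the untwisted pair $(\omega_1,\lambda,\lambda');(\omega_1,\lambda^T,\lambda'^T)$. No new base cases, no box-selection constraint, no separate induction. The key point you missed is that the ``extra'' $\phi^{0,\pm1}$ coming from $\sigma^R$ can be packaged together with one box of $\lambda$ into a single $B$-operator and transferred away in one gauge step, reducing directly to the cases already in hand.
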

\begin{proof}
Let $\lambda'\in \mathcal{Y}_{r,s}$ be such that $\sigma(\lambda)$ is obtained by adding boxes in $(0,1)$ and $(r,a)$ to $\lambda'$ and $\lambda$ is obtained by adding a box in the $(r,a)$-th position. Since $|\lambda|$ is odd, the module with highest weight $(\lambda,\sigma(\lambda^T))$ appears in the branching of $\mathcal{H}_{0}(\mathfrak{so}(2d+1))$. By Proposition \ref{highestweight}, the opposite highest weight vector is given by 
$B^{0,-1}_{r,a}(-1)v^{\lambda'}$, where $v^{\lambda'}$ is the opposite highest weight vector of the irreducible module with highest weight $(\lambda',\lambda'^T)$.

As before, we choose $|\Phi_3\rangle$ to be the opposite highest weight vector of the module with highest weight $(\lambda,\sigma(\lambda^T))$. We set $|\Phi_2\rangle $ to be the highest weight vector $v_{\lambda}$ and $|\Phi_1\rangle$ to be such that the $\mathfrak{H}$-weight of $|\Phi_1\otimes \Phi_2\otimes \Phi_3\rangle $ is zero. In this case $|\Phi_1\rangle $ is $v^{0,1}$.

Let $\langle \Psi'| \in \mathcal{V}^{\dagger}_{\vec{\Lambda}}(\mathfrak{X},\frg,1)$ be a non-zero element. We use gauge symmetry as before to get the following:
\begin{eqnarray*}
&&\langle \Psi'|\Phi_1\otimes \Phi_2\otimes \Phi_3\rangle \\
&&=\langle \Psi'| v^{0,1}\otimes v_{\lambda}\otimes B^{0,-1}_{r,a}(-1)v^{\lambda'}\rangle,\\
&&=-\langle \Psi'| B^{0,-1}_{r,a}(-1)v^{0,1}\otimes v_{\lambda}\otimes v^{\lambda'}\rangle  \\
&&\ \ \ \ \ -\langle \Psi'| v^{0,1}\otimes B^{0,-1}_{r,a}(1) v_{\lambda}\otimes v^{\lambda'}\rangle, \\
&&= \langle \Psi'| v^{-r,-a}\otimes v_{\lambda} \otimes v^{\lambda'}\rangle.\ \ \mbox { (By Lemma similar to \ref{vanish1})}
\end{eqnarray*}

Now, we know that $\langle \Psi'| v^{-r,-a}\otimes v_{\lambda} \otimes v^{\lambda'}\rangle\neq 0$, since rank-level duality holds for the admissible pair $((\omega_1,\lambda, \lambda'), (\omega_1,\lambda^T, \lambda'^T))$. This completes the proof.

\end{proof}

\subsection{The proof in the general case}\label{generalal}
In this section, we finish the proof of Theorem \ref{duality}. We now formulate and prove a key degeneration result using the compatibility of rank-level duality and factorization discussed earlier. Let $\vec{\lambda}_1, \vec{\lambda}_2$ be $n_1$, $n_2$ tuples of weights in $P^0_{2s+1}(\mathfrak{so}(2r+1))$. Consider an $n=n_1+n_2$ tuple $\vec{\lambda}=(\vec{\lambda}_1, \vec{\lambda}_2)$ of weights in $P^0_{2s+1}(\mathfrak{so}(2r+1))$. Similarly, consider $\vec{\mu}=(\vec{\mu}_1, \vec{\mu}_2)$ an $(n_1+n_2)$ tuple of weights in $P^0_{2r+1}(\mathfrak{so}(2r+1))$ such that $(\vec{\lambda}, \vec{\mu})$ is an admissible pair.
\begin{proposition}\label{geometricinput} With the above notation, the following statements are equivalent:
\begin{enumerate}

\item The rank-level duality map for the admissible pair $(\vec{\lambda}, \vec{\mu})$ is an isomorphism for conformal blocks on $\mathbb{P}^1$ with $n$ marked points.
 
\item The following rank-level duality maps are isomorphic:
\begin{itemize}
\item Rank-level duality maps are isomorphisms for all admissible pairs of the form $(\vec{\lambda}_1\cup \lambda, \vec{\mu}_1\cup \mu)$ for conformal blocks on $\mathbb{P}^1$ with $(n_1+1)$ marked points. 
\item  Rank-level duality maps are isomorphisms for all admissible pairs of the form $(\lambda \cup \vec{\lambda}_2, \mu \cup\vec{\mu}_2)$ for conformal blocks on $\mathbb{P}^1$ with $(n_2+1)$ marked points. 
\end{itemize}
\end{enumerate}
\end{proposition}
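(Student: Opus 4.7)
\textbf{Proof plan for Proposition \ref{geometricinput}.} The plan is to realize both statements as consequences of the factorization/sewing machinery of Section~4 combined with the flatness of rank-level duality. I would begin by constructing a family $\pi:\mathcal{X}\to\mathcal{B}=\operatorname{Spec}\mathbb{C}[[t]]$ of $n$-pointed curves whose generic fiber is a smooth $\mathbb{P}^1$ carrying all $n$ marked points and whose special fiber $X_0$ is a chain $C_1\cup_P C_2$ of two projective lines glued at a single node $P$, with the marked points carrying $(\vec{\lambda}_1,\vec{\mu}_1)$ distributed on $C_1$ and the marked points carrying $(\vec{\lambda}_2,\vec{\mu}_2)$ distributed on $C_2$. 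Choose local coordinates at the node as in the TUY setup. The sheaves of conformal blocks $\mathcal{V}^\dagger(\mathcal{X},\mathfrak{so}(2r+1),2s+1)$, $\mathcal{V}^\dagger(\mathcal{X},\mathfrak{so}(2s+1),2r+1)$ and $\mathcal{V}^\dagger(\mathcal{X},\mathfrak{so}(N),1)$ are locally free, and near $t=0$ the sewing isomorphism $\oplus s_{\lambda}(t)$ identifies each with a direct sum, over the level-appropriate weights $\lambda$, $\mu$, $\Lambda$ at the node, of conformal blocks on the normalization $\widetilde{X}_0=C_1\sqcup C_2$ (each of which factors further as a tensor product of blocks on the $(n_1+1)$-pointed $C_1$ and the $(n_2+1)$-pointed $C_2$).

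Next I would invoke the compatibility proposition from \cite{BP} recalled in Section~4, which says that under these sewing isomorphisms
\[\alpha(t)\circ s_\lambda(t)=\sum_{\mu\in B(\lambda)} t^{\,n_\mu}\,s_\mu(t)\circ \alpha_{\lambda,\mu}(t).\]
Because the embedding $\widehat{\mathfrak{so}}(2r+1)\oplus\widehat{\mathfrak{so}}(2s+1)\hookrightarrow\widehat{\mathfrak{so}}(N)$ is conformal with each multiplicity $m^{\Lambda}_{\lambda,\mu}=1$ (Proposition \ref{branch1}), the decomposition is block-diagonal in the sense of Corollary~\ref{keygeometry}: for each $\lambda$ there is a unique level-one $\Lambda$ and a unique $\mu$ with $(\lambda,\mu)\in B(\Lambda)$, so the hypotheses of Corollary~\ref{keygeometry} apply with the pieces indexed by admissible pairs $(\lambda,\mu)$. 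Therefore $\alpha(t)$ is non-degenerate on the punctured disk $\mathcal{B}\setminus\{t=0\}$ if and only if each component $\alpha_{\lambda,\mu}$ on $\widetilde{X}_0$ is non-degenerate. Since the block on $\widetilde{X}_0$ splits as the tensor product of blocks on the two components, and a tensor product of maps between vector spaces of equal finite dimension is an isomorphism iff each factor is, $\alpha_{\lambda,\mu}$ is an isomorphism iff the rank-level duality maps for the pair $(\vec{\lambda}_1\cup\lambda,\vec{\mu}_1\cup\mu)$ on $(\mathbb{P}^1,n_1+1\text{ points})$ and the pair $(\vec{\lambda}_2\cup\lambda^\dagger,\vec{\mu}_2\cup\mu^\dagger)$ on $(\mathbb{P}^1,n_2+1\text{ points})$ are both isomorphisms.

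Finally, the flatness assertion of Proposition~\ref{flatness} tells us that $\alpha$ is projectively flat over the smooth locus of $\mathcal{B}$, so the map has constant rank there. Since any two smooth $n$-pointed $\mathbb{P}^1$ configurations with distinct marked points are connected by a smooth family, the rank-level duality map for $\vec{\lambda},\vec{\mu}$ on one such configuration is an isomorphism iff it is on the generic fiber of $\mathcal{X}$. Chaining the three equivalences yields $(1)\Leftrightarrow(2)$. The technical heart of the argument, and the main thing one must be careful about, is the bookkeeping of admissible node-weights: one has to check that as $\lambda$ ranges over all level $2s+1$ weights of $\mathfrak{so}(2r+1)$ that can occur at the node and as $\Lambda$ ranges over $\{0,\omega_1\}\subset P_1(\mathfrak{so}(N))$, the associated $\mu\in P^0_{2r+1}(\mathfrak{so}(2s+1))$ is uniquely determined (this is the multiplicity-one statement of Proposition~\ref{branch1}) and that every admissible pair at the node appearing in the two statements of (2) arises this way; once this is granted, the sewing/flatness/tensor-product formalism gives the equivalence immediately.
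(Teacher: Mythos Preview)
Your proposal is correct and follows precisely the approach the paper indicates: the paper's own proof reads in full ``The proof follows from Corollary~\ref{keygeometry} and Proposition~\ref{flatness}. Details will be added later,'' and your argument is exactly the natural elaboration of that sketch via the sewing/compatibility machinery of Section~4. One small imprecision worth tightening: it is not quite that for each node weight $\lambda$ there is a unique $(\Lambda,\mu)$, but rather that the level-one $\mathfrak{so}(N)$ node weight $\Lambda\in\{0,\omega_1\}$ is forced by the parity of the $\omega_1$'s on each component, and then for that fixed $\Lambda$ the multiplicity-one branching of Proposition~\ref{branch1} gives the bijection $\lambda\leftrightarrow\mu$ needed to index the blocks in Corollary~\ref{keygeometry} (with spin weights at the node contributing zero since $\vec{\lambda}\subset P^0$).
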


We first start with a lemma. We give a proof of Proposition \ref{geometricinput} using Lemma \ref{keygeometry}. Let $\mathcal{B}=\operatorname{Spec}\mathbb{C}[[t]]$. Suppose $\mathcal{V}$ and $\mathcal{W}$ are vector bundles on $\mathcal{B}$ of same rank and let $\mathcal{L}$ be a line bundle on $\mathcal{B}$. Consider a bilinear map $f: \mathcal{V}\otimes \mathcal{W} \rightarrow \mathcal{L}$. Assume that on $\mathcal{B}$, there  are isomorphisms
\begin{eqnarray*}
\oplus {s_i}:& \mathcal{V} \rightarrow &  \bigoplus_{i\in I} \mathcal{V}_i, \\ 
\oplus t_j :&  \mathcal{W}  \rightarrow &  \bigoplus_{j\in I}\mathcal{W}_j.
\end{eqnarray*}
Further assume that $\mathcal{V}_i$ and $\mathcal{W}_i$ have the same rank. Let $f_{i,j}$ be maps from $\mathcal{V}_i\otimes \mathcal{W}_j \rightarrow {\mathcal{L}}$ such that $f_{i,j}=0$ for $i\neq j$ and $f=\sum_{i\in I} t^{m_i}(f_{i,i}\circ (s_i\otimes t_i))$.
The following lemma is easy to prove. 
\begin{lemma}\label{keygeometry} 
The map $f$ is non-degenerate on $\mathcal{B}^*=\mathcal{B}\setminus \{ t=0\}$ if and only if for all $i\in I$ the maps $f_{i,i}$'s are non-degenerate.

\end{lemma}
\subsubsection{Proof of Proposition \ref{geometricinput}}
We now return to the proof of Proposition \ref{geometricinput}. Let $\mathcal{X}\rightarrow \mathcal{B}$ be a family of curves of genus $0$ such that the generic fiber is a smooth curve and the special fiber $\mathcal{X}_0$ is a nodal curve. In our case, we let $\mathcal{V}$, $\mathcal{W}$  and $\mathcal{L}$ be locally free sheaves $\mathcal{V}_{\vec{\lambda}}(\mathcal{X}, \mathfrak{so}(2r+1), 2s+1)$ and $\mathcal{V}_{\vec{\mu}}(\mathcal{X}, \mathfrak{so}(2s+1), 2r+1)$ and $\mathcal{V}_{\vec{\Lambda}}(\mathcal{X}, \mathfrak{so}(2d+1), 1)$ respectively, where $\vec{\lambda}$ and $\vec{\mu}$ as in Proposition \ref{geometricinput} and $\vec{\Lambda} \in (P^0_{1}(\mathfrak{so}(2d+1))^n$ be such that $(\vec{\lambda}, \vec{\mu})\in B(\vec{\Lambda})$. 

We consider $\mathcal{V}_i$'s to be locally free sheaves of the form $$\mathcal{V}_{\vec{\lambda}_1\cup\lambda}(\mathfrak{X}_1,\mathfrak{so}(2r+1),2s+1)\otimes \mathcal{V}_{\lambda\cup\vec{\lambda}_2}(\mathfrak{X}_2, \mathfrak{so}(2r+1), 2s+1)\otimes \mathbb{C}[[t]],$$ where $\lambda \in P^0_{2s+1}(\mathfrak{so}(2r+1))$, $\mathfrak{X}_1$, $\mathfrak{X}_2$ be the data associated to disjoint copies of $\mathbb{P}^1$ (which are obtained from normalization of $\mathcal{X}_0$) with $n_1+1$, $n_2+1$ points respectively. Similarly, we let $\mathcal{W}_j$'s to be locally free sheaves of the form 
$$\mathcal{V}_{\vec{\mu}_1\cup\mu}(\mathfrak{X}_1,\mathfrak{so}(2s+1),2r+1)\otimes \mathcal{V}_{\mu \cup\vec{\mu}_2}(\mathfrak{X}_2, \mathfrak{so}(2s+1), 2r+1)\otimes \mathbb{C}[[t]],$$ where $\mu \in P^0_{2r+1}(\mathfrak{so}(2s+1))$.  

Since there are bijections (the bijections depend on the factorization of $\mathcal{V}_{\vec{\Lambda}}(\mathcal{X}, \mathfrak{so}(2d+1),1)$ into $n_1$ and $n_2$ parts) between $P^0_{2s+1}(\mathfrak{so}(2r+1))$ and $P^0_{2r+1}(\mathfrak{so}(2s+1))$, we can choose the indexing set $I$ in Lemma \ref{keygeometry} to be $Y_{r,s}\sqcup \sigma(Y_{r,s})$. It is also important to point out that $f_{i,j}=0$ for $i\neq j$ is guaranteed by the fact that given $\lambda \in P^0_{2s+1}(\mathfrak{so}(2r+1))$, $\Lambda \in P^0_{1}(\mathfrak{so}(2d+1))$, there exists exactly one $\mu \in P^0_{2r+1}(\mathfrak{so}(2s+1))$ such that $(\lambda, \mu) \in B(\Lambda)$. The proof of Proposition \ref{geometricinput} now follows from Proposition \ref{keydegen}, Lemma \ref{keygeometry} and Proposition \ref{flatness}. 

\begin{remark}
The situation in Proposition \ref{geometricinput} should be compared to Proposition 5.2 in ~\cite{Pau}.
\end{remark} 
 
An immediate corollary of Proposition \ref{geometricinput} is the following:
\begin{corollary}\label{3pointcase}
If rank-level duality holds for $\mathbb{P}^1$ with three marked points, then it holds for $\mathbb{P}^1$ with an arbitrary number of marked points.
\end{corollary}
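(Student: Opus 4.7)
The plan is to induct on the number $n \geq 3$ of marked points, using Proposition \ref{geometricinput} as the sole reduction tool. The base case $n = 3$ is exactly the hypothesis of the corollary, so the only work to do is to supply an inductive step for $n \geq 4$.

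Fix $n \geq 4$ and assume the rank-level duality map is an isomorphism on $m$-pointed $\mathbb{P}^1$ for every admissible pair and every $3 \leq m < n$. Given an arbitrary admissible pair $(\vec{\lambda}, \vec{\mu})$ of $n$-tuples, I will partition $\vec{\lambda} = (\vec{\lambda}_1, \vec{\lambda}_2)$ and $\vec{\mu} = (\vec{\mu}_1, \vec{\mu}_2)$ with $n_1 = n - 2 \geq 2$ and $n_2 = 2$. Proposition \ref{geometricinput} then says that the rank-level duality map for $(\vec{\lambda}, \vec{\mu})$ on $n$-pointed $\mathbb{P}^1$ is an isomorphism if and only if, for every admissible additional pair $(\lambda, \mu)$, both the rank-level duality map for $(\vec{\lambda}_1 \cup \lambda, \vec{\mu}_1 \cup \mu)$ on $(n-1)$-pointed $\mathbb{P}^1$ and the rank-level duality map for $(\vec{\lambda}_2 \cup \lambda, \vec{\mu}_2 \cup \mu)$ on $3$-pointed $\mathbb{P}^1$ are isomorphisms. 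Since $3 \leq n - 1 < n$, the first is supplied by the inductive hypothesis and the second by the hypothesis of the corollary, closing the induction.

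Since Proposition \ref{geometricinput} is already established, the argument here is essentially formal and presents no substantive new obstacle. The heavy content is entirely packaged into Proposition \ref{geometricinput}, which itself rests on the compatibility of the rank-level duality with factorization at a nodal degeneration (Corollary \ref{keygeometry}) together with its flatness with respect to the $KZ/\text{Hitchin}/\text{WZW}$ connection on families of smooth curves (Proposition \ref{flatness}). The only minor technical point to verify when writing out the argument in full is that admissibility propagates correctly through the splitting used in the inductive step; this is immediate from the definition of admissibility and the branching data recorded in Section \ref{branching2}.
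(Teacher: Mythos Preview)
Your proposal is correct and is precisely the argument the paper has in mind: the paper records this corollary as ``an immediate corollary of Proposition \ref{geometricinput}'' without further detail, and your induction on $n$ with the split $n=(n-2)+2$ is the natural way to unpack that sentence. The only cosmetic remark is that you could equally well split $n=(n-1)+1$ or any $n_1+n_2$ with $n_1,n_2\geq 1$; any such choice yields the same induction once Proposition \ref{geometricinput} is in hand.
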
 

By Proposition ~\ref{diaaut2}, we can further reduce to prove the rank-level duality for an admissible pair of the form $((\lambda_1, \lambda, \lambda_2), (\lambda_1^T,\beta, \lambda_2^T))$, where $\lambda_1, \lambda_2 \in \mathcal{Y}_{r,s}$, $\lambda \in P^0_{2r+1}(\mathfrak{so}(2r+1))$ and $\beta \in P^0_{2s+1}(\mathfrak{so}(2s+1))$. Let $\vec{\lambda}=(\omega_1,\dots, \omega_1, \lambda, \lambda_2)$ and $\vec{\mu}=(\omega_1,\dots,\omega_1,\beta, \lambda_2^T)$, the number of $\omega_1$'s is $|\lambda_1|$. Clearly the pair $(\vec{\lambda}, \vec{\mu})$ is admissible. The following corollary is a direct consequence of Proposition \ref{geometricinput} and Lemma \ref{nonzero}.
\begin{corollary}\label{specialcase} Let $\lambda_1, \lambda_2 \in \mathcal{Y}_{r,s}$. If the rank-level duality is an isomorphism for any  $\mathbb{P}^1$ with $|\lambda_1|+2$ marked points for the admissible pair $\vec{\lambda}=(\omega_1,\dots, \omega_1,\lambda, \lambda_2)$ and $\vec{\mu}=(\omega_1,\dots,\omega_1, \beta, \lambda_2^T)$, then the rank-level duality on $\mathbb{P}^1$ is also an isomorphism for the admissible pair $((\lambda_1, \lambda, \lambda_2),(\lambda_1^T, \beta, \lambda_2^T))$.
\end{corollary}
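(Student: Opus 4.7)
The plan is to invoke Proposition \ref{geometricinput} with the splitting $n_1 = |\lambda_1|$ and $n_2 = 2$, separating the $|\lambda_1|$ marked points carrying the weight $\omega_1$ from the two remaining points carrying $(\lambda,\lambda_2)$ (respectively $(\beta,\lambda_2^T)$ on the $\mathfrak{so}(2s+1)$ side). Since the hypothesis gives that the rank-level duality map on the $(|\lambda_1|+2)$-pointed $\mathbb{P}^1$ is an isomorphism, Proposition \ref{geometricinput} immediately produces isomorphisms of rank-level duality maps for \emph{every} admissible factorization label on both the $(|\lambda_1|+1)$-pointed and the $3$-pointed pieces of the associated nodal degeneration.

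The next step is to identify the $3$-point duality of interest as one of these factorization summands. On the $3$-pointed piece the weights take the form $(\mu', \lambda, \lambda_2)$ and $(\nu', \beta, \lambda_2^T)$, where $(\mu', \nu')$ runs over admissible node labels (noting that every irreducible representation of $\mathfrak{so}(2r+1)$ is self-dual, since the longest Weyl element in type $B_r$ acts as $-1$ on the weight lattice, so each weight at a node coincides with its dual). By the branching description in Proposition \ref{branch1}, the pair $(\lambda_1, \lambda_1^T)$ with $\lambda_1 \in \mathcal{Y}_{r,s}$ is one of the listed admissible types, so selecting $(\mu', \nu') = (\lambda_1, \lambda_1^T)$ yields exactly the rank-level duality map in the conclusion.

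What remains is to guarantee that the label $(\lambda_1, \lambda_1^T)$ contributes non-trivially to the factorization, rather than being suppressed by a vanishing on the auxiliary piece. This is the role of Lemma \ref{nonzero}: it ensures that the rank-level duality map for $(\omega_1, \dots, \omega_1, \lambda_1)$ and $(\omega_1, \dots, \omega_1, \lambda_1^T)$ on the $(|\lambda_1|+1)$-pointed piece is non-zero, which, combined with the matching of source and target dimensions established in Propositions \ref{dim1}--\ref{dim3}, forces it to be an isomorphism. The main obstacle is precisely this non-vanishing check on the auxiliary $(|\lambda_1|+1)$-pointed piece; once it is in hand, the corollary follows formally from the equivalence in Proposition \ref{geometricinput} together with the self-duality of odd-orthogonal weights and the dimension equalities already established.
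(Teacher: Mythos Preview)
Your overall approach matches the paper's: both invoke Proposition~\ref{geometricinput} together with Lemma~\ref{nonzero}, using the splitting $n_1=|\lambda_1|$, $n_2=2$. Your identification of $(\lambda_1,\lambda_1^T)$ as an admissible branching label via Proposition~\ref{branch1}, and the observation on self-duality of type-$B$ weights, are both correct and appropriate here.

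However, your explanation of how Lemma~\ref{nonzero} enters contains two errors. First, Lemma~\ref{nonzero} does not assert that the rank-level duality \emph{map} on the $(|\lambda_1|+1)$-pointed auxiliary piece is non-zero; it asserts that the conformal block \emph{space} $\mathcal{V}^{\dagger}_{(\lambda_1,\omega_1,\dots,\omega_1)}(\mathfrak{X},\mathfrak{so}(2r+1),2s+1)$ has positive dimension. Second, the inference ``non-zero linear map between equidimensional spaces implies isomorphism'' is false in general (it holds only when the common dimension is one). The correct mechanism is exactly the one you stated at the start of that paragraph: Lemma~\ref{nonzero} guarantees that the piece-1 conformal block is non-zero, so the summand indexed by $(\lambda_1,\lambda_1^T)$ does not collapse to zero in the factorization. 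Given that, the block-diagonal non-degeneracy supplied by Corollary~\ref{keygeometry} (which underlies Proposition~\ref{geometricinput}) says the tensor-product block $f^{(1)}\otimes f^{(2)}$ is non-degenerate, and since the piece-1 factor lives on a non-zero space this forces \emph{both} $f^{(1)}$ and $f^{(2)}$ to be isomorphisms simultaneously. No separate argument that $f^{(1)}$ is an isomorphism is required.
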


\begin{lemma}\label{nonzero}
Let $\lambda \in \mathcal{Y}_{r,s}$, and $\vec{\lambda}=(\lambda, \omega_1,\dots, \omega_1)$, where the number of $\omega_1$ is $|\lambda|$, then 
 $$\dim_{\mathbb{C}}\mathcal{V}^{\dagger}_{\vec{\lambda}}(\mathfrak{X}, \mathfrak{so}(2r+1), 2s+1)\neq 0.$$
\end{lemma}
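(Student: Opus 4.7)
The plan is to induct on $|\lambda|$, reducing the general case to the one-dimensional three-point conformal blocks of Proposition~\ref{1dim} via the factorization of conformal blocks.

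For the base case $|\lambda|=0$ we have $\lambda=0$ and $\vec{\lambda}=(0)$; by propagation of vacua (Section~\ref{propofvacua}) this is isomorphic to $\mathcal{V}^{\dagger}_{(0,0,0)}(\mathfrak{X},\mathfrak{so}(2r+1),2s+1)$, which is one-dimensional since the only constraint on the vacuum-vacuum-vacuum invariant form is automatically satisfied.

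For the induction step, assume $|\lambda|\geq 1$ and that the lemma holds for all Young diagrams in $\mathcal{Y}_{r,s}$ of smaller size. Choose a removable corner box of $\lambda$ and let $\lambda'\in\mathcal{Y}_{r,s}$ be the resulting diagram, so that $|\lambda'|=|\lambda|-1$. I would then consider a flat family $\mathcal{X}\to\operatorname{Spec}\mathbb{C}[[t]]$ of $(|\lambda|+1)$-pointed $\mathbb{P}^1$'s specializing at $t=0$ to a nodal curve $X_0=C_1\cup C_2$, where $C_1,C_2\simeq\mathbb{P}^1$ meet at a single node, and distribute the marked points by putting $\lambda$ together with one $\omega_1$ on $C_1$ and the remaining $|\lambda|-1$ copies of $\omega_1$ on $C_2$. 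Since the sheaf of conformal blocks is locally free on such a family and the TUY sewing isomorphism identifies its fiber at $t=0$ with a direct sum indexed by $\mu\in P_{2s+1}(\mathfrak{so}(2r+1))$, the rank of the sheaf is
\[
\dim\mathcal{V}^{\dagger}_{\vec{\lambda}}(\mathfrak{X},\mathfrak{so}(2r+1),2s+1)=\sum_{\mu}\dim\mathcal{V}^{\dagger}_{(\lambda,\omega_1,\mu)}(C_1)\cdot\dim\mathcal{V}^{\dagger}_{(\omega_1,\dots,\omega_1,\mu^{\dagger})}(C_2).
\]

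Taking $\mu=\lambda'$, the first factor equals $1$ by Proposition~\ref{1dim}, since $\lambda\in\mathcal{Y}_{r,s}$ gives $(\lambda,\theta)=\lambda^1+\lambda^2\leq 2s<2s+1$ so the level hypothesis is satisfied, and $\lambda'$ is obtained from $\lambda$ by deleting a box. Because every representation of $\mathfrak{so}(2r+1)$ is self-dual ($-1$ lies in the Weyl group of $B_r$), we have $\mu^{\dagger}=\lambda'$, and the second factor is $\dim\mathcal{V}^{\dagger}_{(\omega_1,\dots,\omega_1,\lambda')}$ with $|\lambda'|$ copies of $\omega_1$, which by the induction hypothesis is at least $1$. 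Therefore the total dimension is at least one.

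The only delicate point in the argument is checking that $\lambda'$ lies in $P^0_{2s+1}(\mathfrak{so}(2r+1))$ and satisfies the hypotheses of Proposition~\ref{1dim}; this is automatic because removing a corner box of $\lambda\in\mathcal{Y}_{r,s}$ keeps the resulting diagram inside $\mathcal{Y}_{r,s}$, which is exactly $P^0_+\cap\{(\mu,\theta)\leq 2s\}$.
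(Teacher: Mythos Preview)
Your proposal is correct and is precisely the argument the paper intends: the paper's proof reads in full ``The proof follows directly by factorization of fusion coefficients and induction on $|\lambda|$,'' and you have carried out exactly this induction, using Proposition~\ref{1dim} for the three-point factor and the self-duality of $\mathfrak{so}(2r+1)$-representations to apply the inductive hypothesis to $\lambda'$.
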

\begin{proof}
The proof follows directly by factorization of fusion coefficients and induction on $|\lambda|$. 
\end{proof}

\subsubsection{Reduction to the one dimensional cases}In the previous section, we reduced Theorem \ref{duality} for admissible pairs of the form $\vec{\lambda}=(\omega_1,\dots, \omega_1, \lambda, \lambda_2)$ and $\vec{\mu}=(\omega_1,\dots,\omega_1, \beta, \lambda_2^T)$, where $\lambda \in P^0_{2s+1}(\mathfrak{so}(2r+1))$, the number of $\omega_1$'s are $|\lambda_1|$, $\lambda_2 \in \mathcal{Y}_{r,s}$ and $\beta \in P^0_{2r+1}(\mathfrak{so}(2s+1))$. The following lemma shows that we can further reduce to the case for certain one dimensional conformal blocks on $\mathbb{P}^1$ with three marked points.
\begin{lemma}\label{31omega1}
Let $\lambda_1, \lambda_2 \in P^0_{2s+1}(\mathfrak{so}(2r+1))$ and $\beta_1, \beta_2 \in P^0_{2r+1}(\mathfrak{so}(2s+1))$. If the rank-level duality holds for admissible pairs of the form $((\lambda_1, \omega_1, \lambda_2),(\beta_1,\omega_1, \beta_2))$, then the rank-level duality holds for admissible pairs on $\mathbb{P}^1$ with arbitrary number of marked points.
\end{lemma}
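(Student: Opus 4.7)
The plan is to combine the earlier reductions with a degeneration argument and an induction on the number of marked points. First, by Corollary \ref{3pointcase} it is enough to establish rank-level duality for every admissible pair on a three-pointed $\mathbb{P}^1$. Given such a pair $((\mu_1,\mu_2,\mu_3),(\beta_1,\beta_2,\beta_3))$, I would apply Proposition \ref{diaaut2} together with the decomposition $P^0_{2s+1}(\mathfrak{so}(2r+1))=\mathcal{Y}_{r,s}\sqcup\sigma(\mathcal{Y}_{r,s})$ to reduce to the case in which $\mu_1,\mu_3\in\mathcal{Y}_{r,s}$; this requires applying the diagram automorphism to an even number of coordinates (here two), together with the compatible automorphism on the dual side coming from the embedding of centers. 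Corollary \ref{specialcase} then translates this three-point rank-level duality into rank-level duality for the $(|\mu_1|+2)$-pointed admissible pair $(\omega_1,\dots,\omega_1,\mu_2,\mu_3)$ and $(\omega_1,\dots,\omega_1,\beta_2,\mu_3^T)$.

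It therefore suffices to show that, for every integer $k\geq 1$ and every admissible pair of the form $((\omega_1^k,\lambda,\lambda_2),(\omega_1^k,\beta,\beta_2))$ on $\mathbb{P}^1$ with $k+2$ marked points, the rank-level duality map is an isomorphism. I would prove this by induction on $k$. The base case $k=1$ is precisely the hypothesis of the lemma: the pair in question is a three-point admissible pair carrying $\omega_1$ at one of the three marked points.

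For the inductive step $k\geq 2$, I would degenerate the $(k+2)$-pointed $\mathbb{P}^1$ to a nodal curve $\widetilde X_0=C_1\cup C_2$ in which $C_1$ carries one of the $\omega_1$-marked points and the $\lambda$-marked point (giving a three-pointed $\mathbb{P}^1$ after including the node), while $C_2$ carries the remaining $k-1$ copies of $\omega_1$ together with $\lambda_2$ (giving a $(k+1)$-pointed $\mathbb{P}^1$). Proposition \ref{geometricinput} then asserts that the rank-level duality on the original curve is an isomorphism if and only if it is an isomorphism for every admissible extension on both $C_1$ and $C_2$. On $C_1$ one encounters admissible pairs of the form $((\omega_1,\lambda,\gamma),(\omega_1,\beta,\delta))$, which are three-point admissible pairs carrying $\omega_1$ at a marked point and are thus covered by the hypothesis of the lemma. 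On $C_2$ one encounters admissible pairs of the form $((\omega_1^{k-1},\lambda_2,\gamma),(\omega_1^{k-1},\beta_2,\delta))$, which match the inductive form with strictly fewer $\omega_1$'s and are covered by the inductive hypothesis; here one uses that every admissible node weight lies in $P^0_{2s+1}(\mathfrak{so}(2r+1))$, which follows from the branching rules of Section \ref{branching2}.

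The hard part is to preserve the inductive form through the degeneration. This is what dictates the specific choice of splitting: by moving only one $\omega_1$ to $C_1$ along with a single non-$\omega_1$ weight, $C_1$ becomes a three-pointed curve with $\omega_1$ at a marked point (directly covered by the hypothesis), while $C_2$ retains the inductive shape with one fewer $\omega_1$. A minor subtlety is to verify that the reduction in the first paragraph is valid when $\mu_1\in\sigma(\mathcal{Y}_{r,s})$, which is precisely where one needs Proposition \ref{diaaut2} combined with the fact that the embedding of centers $Z(\operatorname{Spin}(2r+1))\times Z(\operatorname{Spin}(2s+1))\to Z(\operatorname{Spin}((2r+1)(2s+1)))$ is compatible with the diagram automorphism action on conformal blocks.
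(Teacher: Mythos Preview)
Your proof is correct and follows the same route as the paper. The paper's own proof is the single line ``The proof follows directly from Proposition~\ref{geometricinput},'' which in context presupposes the reductions already established immediately before the lemma (Corollary~\ref{3pointcase}, the paragraph invoking Proposition~\ref{diaaut2}, and Corollary~\ref{specialcase}); your steps 1--3 simply make these reductions explicit, and your inductive step on $k$ is exactly the repeated application of Proposition~\ref{geometricinput} that the paper has in mind, peeling off one $\omega_1$ at a time so that the three-pointed component always carries an $\omega_1$ and falls under the hypothesis.
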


\begin{proof} The proof follows from Proposition \ref{geometricinput}.
\end{proof}
We use Proposition \ref{diaaut2} and Proposition \ref{1dima} to further reduce to the following admissible pairs for certain one dimensional conformal blocks on $\mathbb{P}^1$ with three marked points:
\begin{enumerate}
\item $(\omega_1, \lambda_2, \lambda_3), (\omega, \lambda_2^T, \lambda_3^T)$, where $\lambda_2, \lambda_3 \in \mathcal{Y}_{r,s}$ and $\lambda_2$ is obtained by $\lambda_3$ either by adding or deleting a box. 
\item $(\omega_1, \lambda, \lambda), (\omega_1, \lambda^T, \sigma(\lambda^T))$, where $\lambda \in \mathcal{Y}_{r,s}$ and $(\lambda, L_r)\neq 0$. 
\end{enumerate} 

The rank-level duality in these cases has been proved in Section \ref{minimalcases} and Section \ref{reminimalcases}. This completes the proof of Theorem \ref{duality}.

\section{Key lemmas}

\begin{lemma}
Let $\xi =\exp(\frac{\pi\sqrt{-1}}{2(r+s)})$. Consider the matrix $W$ whose $(i,j)$-th entry is the complex number $(\xi^{i(2j-1)}-\xi^{-i(2j-1)})$. Then, 
\[WW^T=\left(\begin{array}{cccc}
c & & &\\
&\ddots & &\\
&&c&\\
& & & 2c
\end{array}\right),\] where $c=-2(r+s)$.
\end{lemma}

Let $U$ be a partition of $\{1,\dots, r+s\}$ such that $r+s\in U$ and $|U|=r$. Let $P$ be the permutation matrix associated to the permutation $(U,U^c)$. Then,
\[PWW^TP^{-1}=\left(\begin{array}{cccc}
2c & & & \\
&   c& &\\
&&\ddots  & \\
& & & c
\end{array}\right).\]
Let $A=W$ and $B=W^T$ and $U$, $T$ as in Lemma ~\ref{surprise}. Then, we have the following:
\begin{equation}\label{surprise2}
c^{s}\det{A}_{U,T}=\operatorname{sgn}(U,U^c)\operatorname{sgn}(T,T^c)\det{A} \det{B}_{T^c,U^c}.
\end{equation}

We now state and prove one of the two key lemmas that we used in the proof of the equality of dimensions of the rank-level duality map in Section 7. 
Let $[r+s]$ denote the set $\{1,2,\dots, r+s\}$. We define the following sets:
\begin{enumerate}
\item Consider $\lambda=(\lambda^{1}\geq \lambda^{2}\geq \dots \geq \lambda^{r}) \in \mathcal{Y}_{r,s}$. We define $\alpha^i=\lambda^{i}+r+1-i$ and $[\alpha]=\{\alpha^1> \alpha^2> \dots > \alpha^r\}$.  
\item Consider the complement $[\beta]=(\beta^1> \beta^2>\dots >\beta^s)$ of $[\alpha]$ in $[r+s]$. We define another set $[\gamma]=(\gamma^1 > \gamma^2 >\dots > \gamma^s)$ where $\gamma^i=((r+s)-(\beta^{(s+1-i)}-\frac{1}{2}))$. 
\item Let $T=(t_1>t_2>\dots >t_r)$ where $t_i=r+1-i$; $T'=(t'_1>t'_2>\dots > t'_s)$, where $t'_i=s+1-i$ and $T^c=(t_1^c>t_2^c>\dots >t_s^c)$ is the complement of $T$ in $[r+s]$.
\item $U=(u_1>u_2>\dots > u_r)$ be a subset of $[r+s]$ of cardinality $r$ such that $r+s \in U$ and $U^c=(u_1^c>u_2^c>\dots >u_s^c)$ be the complement of $U$ in $[r+s]$.
\end{enumerate}
Then, for $\lambda \in \mathcal{Y}_{r,s}$, we can write by the Weyl character formula \eqref{WCF}
\begin{eqnarray*}
\operatorname{Tr}_{V_{\lambda}}(\exp{\pi\sqrt{-1}\frac{\mu +\rho}{r+s}})&=&\frac{\operatorname{det}(\zeta^{u_i(\alpha^j-\frac{1}{2})}-\zeta^{-u_i(\alpha^j-\frac{1}{2})})}{\operatorname{det}(\zeta^{u_i(t_j-\frac{1}{2})}-\zeta^{-u_i(t_j-\frac{1}{2})})}, \\
&=&\frac{\operatorname{det}(\xi^{u_i(2\alpha^j-1)}-\xi^{-u_i(2\alpha^j-1)})}{\operatorname{det}(\xi^{u_i(2t_j-1)}-\xi^{-u_i(2t_j-1)})},
\end{eqnarray*}
where $\mu+\rho=\sum_{i=1}^ru_iL_i$ and $\zeta=\xi^2$ as in Section 7.

For $\lambda^{T}\in \mathcal{Y}_{r,s}$, $\mu'+\rho'=\sum_{i=1}^su^c_iL_i$ and $\rho'$ the Weyl vector of $\mathfrak{so}(2s+1)$, we can write 
\begin{eqnarray*}
\operatorname{Tr}_{V_{\lambda^T}}(\exp{\pi\sqrt{-1}\frac{\mu' +\rho'}{r+s}})&=&\frac{\operatorname{det}(\zeta^{u^c_i(\gamma^j)}-\zeta^{-u^c_i(\gamma^j)})}{\operatorname{det}(\zeta^{u^c_i(t'_j-\frac{1}{2})}-\zeta^{-u^c_i(t'_j-\frac{1}{2})})},\\
&=& \frac{\operatorname{det}(\zeta^{u^c_i((r+s)-(\beta^j-\frac{1}{2}))}-\zeta^{-u^c_i((r+s)-(\beta^j-\frac{1}{2}))})}{{\operatorname{det}(\zeta^{u^c_i((r+s)-(t^c_j-\frac{1}{2}))}-\zeta^{-u^c_i((r+s)-(t^c_j-\frac{1}{2}))})}},\\
&=& \frac{\operatorname{det}(\zeta^{-u^c_i(r+s)}(\zeta^{u^c_i(\beta^j-\frac{1}{2})}-\zeta^{-u^c_i(\beta^j-\frac{1}{2})}))}{\operatorname{det}(\zeta^{-u^c_i(r+s)}(\zeta^{u^c_i(t^c_j-\frac{1}{2})}-\zeta^{-u^c_i(t^c_j-\frac{1}{2})}))},\\
&=&\frac{\operatorname{det}(\zeta^{u^c_i(\beta^j-\frac{1}{2})}-\zeta^{-u^c_i(\beta^j-\frac{1}{2})})}{\operatorname{det}(\zeta^{u^c_i(t^c_j-\frac{1}{2})}-\zeta^{-u^c_i(t^c_j-\frac{1}{2})})},\\
&=&\frac{\operatorname{det}(\xi^{u^c_i(2\beta^j-1)}-\xi^{-u^c_i(2\beta^j-1)})}{\operatorname{det}(\xi^{u^c_i(2t^c_j-1)}-\xi^{-u^c_i(2t^c_j-1)})}, 
\end{eqnarray*}
where $\xi$ and $\zeta$ are as before.
By applying Equation ~\ref{surprise2}, we get the following:
\begin{lemma}\label{dirty1}
$$\operatorname{Tr}_{V_{\lambda}}(\exp{\pi\sqrt{-1}\frac{\mu +\rho}{r+s}})=\frac{\operatorname{sgn}([\alpha],[\beta])}{\operatorname{sgn}(T,T^c)}
\operatorname{Tr}_{V_{\lambda^T}}(\exp{\pi\sqrt{-1}\frac{\mu' +\rho'}{r+s}}).$$
\end{lemma}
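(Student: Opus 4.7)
The plan is to apply the matrix identity of Lemma~\ref{surprise} (in its rephrased form \eqref{surprise2}) separately to the numerator and the denominator of the Weyl character formula for $\mathfrak{so}(2r+1)$, and then take ratios so that the common factors $c^{s}$, $\operatorname{sgn}(U,U^{c})$, and $\det A$ cancel. Everything has effectively been set up in the paragraphs preceding the statement; what remains is the bookkeeping.

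First, I would fix the matrix $A=W$ with $(i,j)$-entry $\xi^{i(2j-1)}-\xi^{-i(2j-1)}$ and $B=W^{T}$, and invoke the computation $WW^{T}=\operatorname{diag}(c,\dots,c,2c)$. Since we are working under the standing assumption $r+s\in U$, the prefactor $a_{\pi(r+1)}\cdots a_{\pi(r+s)}$ in Lemma~\ref{surprise} equals $c^{s}$ both times it is used, so it cancels in the ratio. Next, I would record that
\[
\operatorname{Tr}_{V_{\lambda}}\!\Big(\!\exp{\tfrac{\pi\sqrt{-1}(\mu+\rho)}{r+s}}\Big)=\frac{\det A_{U,[\alpha]}}{\det A_{U,T}},\qquad
\operatorname{Tr}_{V_{\lambda^{T}}}\!\Big(\!\exp{\tfrac{\pi\sqrt{-1}(\mu'+\rho')}{r+s}}\Big)=\frac{\det A_{U^{c},[\beta]}}{\det A_{U^{c},T^{c}}},
\]
which are precisely the rewritings obtained in the two displays immediately preceding the lemma.

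Now I would apply \eqref{surprise2} twice, both times with rows indexed by $U$: once with columns indexed by $[\alpha]$ (whose complement in $[r+s]$ is $[\beta]$), and once with columns indexed by $T$ (whose complement is $T^{c}$). This produces
\[
c^{s}\det A_{U,[\alpha]}=\operatorname{sgn}(U,U^{c})\operatorname{sgn}([\alpha],[\beta])\,\det A\,\det B_{[\beta],U^{c}},
\]
\[
c^{s}\det A_{U,T}=\operatorname{sgn}(U,U^{c})\operatorname{sgn}(T,T^{c})\,\det A\,\det B_{T^{c},U^{c}}.
\]
Dividing the two identities kills the factors $c^{s}$, $\operatorname{sgn}(U,U^{c})$ and $\det A$. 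Finally, using that $B=W^{T}$ so that $\det B_{[\beta],U^{c}}=\det A_{U^{c},[\beta]}$ and $\det B_{T^{c},U^{c}}=\det A_{U^{c},T^{c}}$ (transposition preserves determinant), the right-hand side of the quotient is exactly $\operatorname{sgn}([\alpha],[\beta])/\operatorname{sgn}(T,T^{c})$ times $\operatorname{Tr}_{V_{\lambda^{T}}}(\exp(\pi\sqrt{-1}(\mu'+\rho')/(r+s)))$, which is the claim.

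There is no real obstacle here beyond keeping the sign conventions consistent; the only subtle point is checking that the numerical prefactor on the left side of Lemma~\ref{surprise} genuinely simplifies to $c^{s}$ in both applications, which holds precisely because $r+s\in U$ (so the index $r+s$, carrying the anomalous diagonal entry $2c$, never appears in $U^{c}$). Everything else is formal manipulation of determinants and signs.
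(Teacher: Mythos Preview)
Your proof is correct and follows essentially the same approach as the paper: express both traces as ratios of minors of the matrix $W$, apply \eqref{surprise2} once with column set $[\alpha]$ and once with column set $T$, divide, and cancel the common factors $c^{s}$, $\operatorname{sgn}(U,U^{c})$, $\det A$. The paper does exactly this, merely compressing the argument into the phrase ``By applying~\eqref{surprise2}, we get the following''; your write-up makes the bookkeeping (in particular the reason the prefactor equals $c^{s}$ in both applications, namely that $r+s\in U$ forces the anomalous diagonal entry $2c$ out of $U^{c}$) more explicit.
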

The following can be checked by a direct calculation:
\begin{lemma}
$$\operatorname{sgn}([\alpha],[\beta])=(-1)^{\frac{r(r-1)}{2}+\frac{s(s-1)}{2}+ |\lambda|}.$$
$$\operatorname{sgn}(T,T^c)=(-1)^{\frac{r(r-1)}{2}+\frac{s(s-1)}{2}}.$$
\end{lemma}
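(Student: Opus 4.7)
The plan is to handle the two identities separately: the second is essentially a direct inversion count, while the first comes by induction on $|\lambda|$ with the second identity as the base case.

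First I would dispose of $\operatorname{sgn}(T, T^c)$. Since $T = (r, r-1, \dots, 1)$ and $T^c = (r+s, r+s-1, \dots, r+1)$, the concatenated sequence has $\binom{r}{2}$ inversions inside $T$ (it is the reverse of $(1,\dots,r)$), $\binom{s}{2}$ inversions inside $T^c$ (the reverse of $(r+1,\dots,r+s)$), and no inversions between the two blocks because every entry of $T$ is strictly less than every entry of $T^c$. Summing gives $\operatorname{sgn}(T,T^c)=(-1)^{r(r-1)/2+s(s-1)/2}$.

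For $\operatorname{sgn}([\alpha],[\beta])$, note that when $\lambda=0$ we have $\alpha^i = r+1-i = t_i$, so $[\alpha]=T$ and $[\beta]=T^c$; the base case of the induction therefore matches the claimed formula. For the inductive step, let $\lambda'$ be obtained from $\lambda$ by adjoining one box in row $i$, so $\alpha^i$ is replaced by $\alpha^i + 1$ while the other $\alpha^j$'s are unchanged. The partition inequality $\lambda^{i-1}>\lambda^i$ (resp. vacuous bound at $i=1$) gives $\alpha^{i-1}\geq \alpha^i+2$, and $\alpha^{i+1}\leq \alpha^i-1$, so $\alpha^i+1$ lies strictly between $\alpha^{i-1}$ and $\alpha^{i+1}$ and is distinct from every $\alpha^j$. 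In particular $\alpha^i+1\in[\beta]$, say $\beta^k=\alpha^i+1$. Then $[\alpha']$ keeps its sorted order with $\alpha^i+1$ in slot $i$, and $[\beta']$ is obtained from $[\beta]$ by replacing $\beta^k$ with $\alpha^i$; the strict inequalities $\beta^{k-1}>\alpha^i+1>\alpha^i>\beta^{k+1}$ (the last uses $\alpha^i\notin[\beta]$) show $\alpha^i$ lands in slot $k$ of $[\beta']$. Consequently the sequence $([\alpha'],[\beta'])$ differs from $([\alpha],[\beta])$ precisely by swapping the values at positions $i$ and $r+k$, a single transposition. This flips the sign: $\operatorname{sgn}([\alpha'],[\beta'])=-\operatorname{sgn}([\alpha],[\beta])$, which matches $|\lambda'|=|\lambda|+1$.

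Iterating, $\operatorname{sgn}([\alpha],[\beta]) = (-1)^{|\lambda|}\operatorname{sgn}(T,T^c) = (-1)^{r(r-1)/2+s(s-1)/2+|\lambda|}$. The only step that needs genuine care is the verification that both $[\alpha']$ and $[\beta']$ reorder into exactly one slot-swap rather than a longer chain of displacements; this is where the partition inequality $\lambda^{i-1}>\lambda^i$ enters in a nontrivial way, and all the other manipulations are clerical.
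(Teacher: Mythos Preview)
Your proof is correct. The paper itself does not supply an argument for this lemma---it simply records the statement with the remark ``can be checked by direct calculation''---so your inductive verification is in fact more than the paper provides. The computation of $\operatorname{sgn}(T,T^c)$ by inversion count is immediate, and the box-addition induction for $\operatorname{sgn}([\alpha],[\beta])$ is clean: the key observation that $\alpha^i+1$ and $\alpha^i$ occupy the same slots in $[\beta]$ and $[\beta']$ respectively (so that $([\alpha'],[\beta'])$ differs from $([\alpha],[\beta])$ by a single transposition) is exactly the right bookkeeping, and your use of the admissibility constraint $\lambda^{i-1}>\lambda^i$ to force $\alpha^{i-1}\geq\alpha^i+2$ is what makes it work. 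The edge cases $i=1$ (no $\alpha^{i-1}$; instead one needs $\alpha^1+1\leq r+s$, which follows from $\lambda^1<s$) and $k\in\{1,s\}$ are harmless, as you note.
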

Thus, we have the following equality: 
\begin{equation*}
\operatorname{Tr}_{V_{\lambda}}(\exp{\pi\sqrt{-1}\frac{\mu +\rho}{r+s}})=(-1)^{|\lambda|}
\operatorname{Tr}_{V_{\lambda^T}}(\exp{\pi\sqrt{-1}\frac{\mu' +\rho'}{r+s}}).
\end{equation*}
Let $\xi=\exp{\frac{\pi\sqrt{-1}}{4(r+s)}}$. Then, the following equality holds for any integers $a$ and $b$:
\begin{equation*}
\xi^{(2(r+s)-(2a-1))(2(r+s)-(2b-1))}=(-1)^{(a+b)}\xi^{(2a-1)(2b-1)}.
\end{equation*}
\begin{lemma}
Let $\xi =\exp(\frac{\pi\sqrt{-1}}{4(r+s)})$. Consider the matrix $W$ whose $(i,j)$-th entry is the complex number $(\xi^{(2i-1)(2j-1)}-\xi^{-(2i-1)(2j-1)})$. Then, the following holds:
\[WW^T=\left(\begin{array}{cccc}
c & & &\\
&\ddots & &\\
&&c&\\
& & & c
\end{array}\right),\] where $c=-2(r+s)$.
\end{lemma}
Let $U$ be a partition of $\{1,\dots, r+s\}$ such that $|U|=r$. 
Let $A=W$, $B=W^T$ and $U$, $T$ as in Lemma ~\ref{surprise}. Then, 
\begin{equation}\label{surprise3}
c^{s}\det{A}_{U,T}=\operatorname{sgn}(U,U^c)\operatorname{sgn}(T,T^c)\det{A} \det{B}_{T^c,U^c}.
\end{equation}

Let $U'=(u_1'>u_2'>\dots > u_r')$ be a subset of $[r+s]$ of cardinality $r$, $U'^c=(u'^c_1>\dots >u'^c_s)$ be the complement of $U'$ in $[r+s]$ and $\mu+\rho=\sum_{i=1}^r(u_i'-\frac{1}{2})L_i$. Then, by the Weyl character formula \eqref{WCF}, we can write the following for $\lambda \in \mathcal{Y}_{r,s}$:
\begin{eqnarray*}
\operatorname{Tr}_{V_{\lambda}}(\exp{\pi\sqrt{-1}\frac{\mu +\rho}{r+s}})&=&\frac{\operatorname{det}(\zeta^{(u'_i-\frac{1}{2})(\alpha^j-\frac{1}{2})}-\zeta^{-(u'_i-\frac{1}{2})(\alpha^j-\frac{1}{2})})}{\operatorname{det}(\zeta^{(u'_i-\frac{1}{2})(t_j-\frac{1}{2})}-\zeta^{-(u'_i-\frac{1}{2})(t_j-\frac{1}{2})})}, \\
&=&\frac{\operatorname{det}(\xi^{(2u'_i-1)(2\alpha^j-1)}-\xi^{-(2u'_i-1)(2\alpha^j-1)})}{\operatorname{det}(\xi^{(2u'_i-1)(2t_j-1)}-\xi^{-(2u'_i-1)(2t_j-1)})}.
\end{eqnarray*}

For $\lambda^{T}\in \mathcal{Y}_{r,s}$, $\mu'+\rho'=\sum_{i=1}^s((r+s+\frac{1}{2})-u'^c_i)L_i$ and $\rho'$ be the Weyl vector of $\mathfrak{so}(2s+1)$, we can write the following:
\begin{eqnarray*}
\operatorname{Tr}_{V_{\lambda^T}}(\exp{\pi\sqrt{-1}\frac{\mu' +\rho'}{r+s}})&=& \frac{\operatorname{det}(\zeta^{((r+s)-(u'^c_i-\frac{1}{2}))((r+s)-(\beta^j-\frac{1}{2}))}-\zeta^{((r+s)-(u'^c_i-\frac{1}{2}))((r+s)-(\beta^j-\frac{1}{2}))})}{{\operatorname{det}(\zeta^{((r+s)-(u'^c_i-\frac{1}{2}))((r+s)-(t^c_j-\frac{1}{2}))}-\zeta^{((r+s)-(u'^c_i-\frac{1}{2}))((r+s)-(t^c_j-\frac{1}{2}))})}},\\
&=&\frac{(-1)^{\sum_{i=1}^s(u'^c_i+\beta_i)}}{(-1)^{\sum_{i=1}^s(u'^c_i+t^c_i)}}	\frac{\operatorname{det}(\xi^{(2u'^c_i-1)(2\beta^j-1)}-\xi^{-(2u'^c_i-1)(2\beta^j-1)})}{\operatorname{det}(\xi^{(2u'^c_i-1)(2t^c_j-1)}-\xi^{-(2u'^c_i-1)(2t^c_j-1)})},\\
&=&\frac{(-1)^{\sum_{i=1}^s(\beta_i)}}{(-1)^{\sum_{i=1}^s(t^c_i)}}	\frac{\operatorname{det}(\xi^{(2u'^c_i-1)(2\beta^j-1)}-\xi^{-(2u'^c_i-1)(2\beta^j-1)})}{\operatorname{det}(\xi^{(2u'^c_i-1)(2t^c_j-1)}-\xi^{-(2u'^c_i-1)(2t^c_j-1)})},\\
&=&(-1)^{|\lambda|}\frac{\operatorname{det}(\xi^{(2u'^c_i-1)(2\beta^j-1)}-\xi^{-(2u'^c_i-1)(2\beta^j-1)})}{\operatorname{det}(\xi^{(2u'^c_i-1)(2t^c_j-1)}-\xi^{-(2u'^c_i-1)(2t^c_j-1)})}.
\end{eqnarray*}
From Equation ~\ref{surprise3}, we get the following lemma:

\begin{lemma}\label{dirty2}
$$\operatorname{Tr}_{V_{\lambda}}(\exp{\pi\sqrt{-1}\frac{\mu +\rho}{r+s}})=\operatorname{Tr}_{V_{\lambda^T}}(\exp{\pi\sqrt{-1}\frac{\mu' +\rho'}{r+s}}).$$
\end{lemma}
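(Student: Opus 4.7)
The proof will run in close parallel to Lemma \ref{dirty1}, but will use the half-integer matrix identity (\ref{surprise3}) instead of (\ref{surprise2}). The plan is to convert the ratio of $r\times r$ determinants giving $\operatorname{Tr}_{V_\lambda}(\exp(\pi\sqrt{-1}(\mu+\rho)/(r+s)))$ into a ratio of complementary $s\times s$ determinants, and then match the result to the formula for $\operatorname{Tr}_{V_{\lambda^T}}(\exp(\pi\sqrt{-1}(\mu'+\rho')/(r+s)))$ already displayed just above the statement.

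First I would apply (\ref{surprise3}) with $A=W$, $B=W^T$, $U=U'$ and $T=[\alpha]$ to the numerator of the ratio for $\operatorname{Tr}_{V_\lambda}$, and again with $T=(r,r-1,\dots,1)$ to the denominator. Because $W$ is symmetric in $(i,j)$, we have $\det B_{T^c,U^c}=\det A_{U^c,T^c}$, and the common factors $\operatorname{sgn}(U',U'^c)\cdot\det A/c^s$ cancel between numerator and denominator. This gives
$$\frac{\det A_{U',[\alpha]}}{\det A_{U',T}}=\frac{\operatorname{sgn}([\alpha],[\beta])}{\operatorname{sgn}(T,T^c)}\cdot\frac{\det A_{U'^c,[\beta]}}{\det A_{U'^c,T^c}}.$$
Next, by the sign computation already recorded in the lemma preceding \ref{dirty1}, $\operatorname{sgn}([\alpha],[\beta])=(-1)^{r(r-1)/2+s(s-1)/2+|\lambda|}$ and $\operatorname{sgn}(T,T^c)=(-1)^{r(r-1)/2+s(s-1)/2}$, so the ratio of signs is exactly $(-1)^{|\lambda|}$. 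Hence
$$\operatorname{Tr}_{V_\lambda}\bigl(\exp\tfrac{\pi\sqrt{-1}(\mu+\rho)}{r+s}\bigr)=(-1)^{|\lambda|}\cdot\frac{\det A_{U'^c,[\beta]}}{\det A_{U'^c,T^c}}.$$

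Finally I would compare this with the formula
$$\operatorname{Tr}_{V_{\lambda^T}}\bigl(\exp\tfrac{\pi\sqrt{-1}(\mu'+\rho')}{r+s}\bigr)=(-1)^{|\lambda|}\cdot\frac{\det A_{U'^c,[\beta]}}{\det A_{U'^c,T^c}},$$
which was already derived in the preamble to the statement by using the key relation $\xi^{(2(r+s)-(2a-1))(2(r+s)-(2b-1))}=(-1)^{a+b}\xi^{(2a-1)(2b-1)}$ together with the parity identity $\sum\beta^i+\sum t^c_i\equiv|\lambda|\pmod 2$. The two $(-1)^{|\lambda|}$ factors agree, and the lemma follows immediately.

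The only mildly delicate point is the parity identity $\sum_i\beta^i\equiv\sum_i t^c_i+|\lambda|\pmod 2$; this is a purely combinatorial fact that falls out from $\beta^i$ being the complement of $\{\lambda^j+r+1-j\}$ inside $[r+s]$ and can be verified by induction on $|\lambda|$ (adding a single box to $\lambda$ changes exactly one element of $[\alpha]$ by $1$ and correspondingly swaps one element between $[\alpha]$ and $[\beta]$). Everything else is a bookkeeping exercise in the symmetry of $W$ and in the identity (\ref{surprise3}) whose matrix setup was already arranged for this purpose.
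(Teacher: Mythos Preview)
Your proposal is correct and follows essentially the same approach as the paper: apply the minor identity (\ref{surprise3}) to the numerator and denominator of the Weyl-character ratio for $\operatorname{Tr}_{V_\lambda}$, use the symmetry of $W$ and the sign lemma to produce the factor $(-1)^{|\lambda|}$, and match against the preamble computation for $\operatorname{Tr}_{V_{\lambda^T}}$ which carries the same factor $(-1)^{|\lambda|}$. The paper merely writes ``From~(\ref{surprise3}), we get the following,'' and you have correctly supplied exactly the details that this sentence elides.
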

\subsection{Some Trace calculations}
Let $\zeta=\exp(\frac{\pi\sqrt{-1}}{r+s})$ and $U=(u_1>u_2>\dots>u_r)$ be a subset of $[r+s]$ of cardinality $r$. Then, we have the following: 
\begin{eqnarray*}
\zeta^{(u_i-\frac{1}{2})((2r+1)+s-\frac{1}{2})}-\zeta^{-(u_i-\frac{1}{2})((2r+1)+s-\frac{1}{2})}&=& \zeta^{(u_i-\frac{1}{2})(2(r+s)-(s-\frac{1}{2}))}-\zeta^{-(u_i-\frac{1}{2})(2(r+s)-(s-\frac{1}{2}))},\\
&=& -\big(\zeta^{-(u_i-\frac{1}{2})(s-\frac{1}{2})}-\zeta^{(u_i-\frac{1}{2})(s-\frac{1}{2})}\big),\\
&=& \zeta^{(u_i-\frac{1}{2})(s-\frac{1}{2})}-\zeta^{-(u_i-\frac{1}{2})(s-\frac{1}{2})}.
\end{eqnarray*}
The above calculation and the Weyl character formula gives us the following lemma:
\begin{lemma}\label{trivial1}
Consider the dominant weight $\lambda=(2s+1)\omega_1$ of $\mathfrak{so}(2r+1)$. Let $U=(u_1>u_2>\dots>u_r)$ be a subset of $[r+s]$ of cardinality $r$ and $\mu+\rho=\sum_{i=1}^r(u_i-\frac{1}{2})L_i$. Then, 
$$\operatorname{Tr}_{\lambda}(\exp(\pi\sqrt{-1}\frac{\mu+\rho}{r+s}))=1.$$
\end{lemma}
Let $\zeta=\exp(\frac{\pi\sqrt{-1}}{r+s})$ and $U=(u_1>u_2>\dots>u_r)$ be a subset of $[r+s]$ of cardinality $r$. Then, we have the following:
\begin{eqnarray*}
\zeta^{u_i((2r+1)+s-\frac{1}{2})}-\zeta^{-u_i((2r+1)+s-\frac{1}{2})}&=& \zeta^{u_i(2(r+s)-(s-\frac{1}{2}))}-\zeta^{-u_i(2(r+s)-(s-\frac{1}{2}))},\\
&=& \zeta^{-u_i(s-\frac{1}{2})}-\zeta^{u_i(s-\frac{1}{2})},\\
&=& -\big(\zeta^{u_i(s-\frac{1}{2})}-\zeta^{-u_i(s-\frac{1}{2})}\big).
\end{eqnarray*}
The proof of the next lemma also follows from the above calculation and the Weyl character formula gives us the following lemma. 
\begin{lemma}\label{trivial2}
Consider the dominant weight $\lambda=(2s+1)\omega_1$ of $\mathfrak{so}(2r+1)$. Let $U=(u_1>u_2>\dots>u_r)$ be a subset of $[r+s]$ of cardinality $r$ and $\mu+\rho=\sum_{i=1}^r u_iL_i$. Then, 
$$\operatorname{Tr}_{\lambda}(\exp(\pi\sqrt{-1}\frac{\mu+\rho}{r+s}))=-1.$$

\end{lemma}

\subsection{Some trigonometric functions}\label{trigfunc}
We recall from ~\cite{OW} a family of trigonometric functions which has surprising identities. These identities are fundamental to the reciprocity laws of the Verlinde formula in ~\cite{OW}. 

Consider a positive integer $k$ and let $f_k(r)=4\sin^2(\frac{r\pi}{k}).$ Given a finite set $U=\{u_1,\dots, u_r\}$ of rational numbers, we consider the following functions defined in Section 1 of ~\cite{OW} (where an empty product is deemed to be 1):

$$\mathcal{P}_k(U)=\prod_{1\leq i < j\leq r}\bigg(f_k(u_i-u_j)f_k(u_i+u_j)\bigg) \hspace{1cm} \mathcal{N}_k(U)=\prod_{i=1}^rf_k(u_i),$$
$$\Phi_{k}(U)=\mathcal{P}_k(U)\mathcal{N}_k(U)$$

We use the function $\Phi_{k}(U)$ to rewrite the Verlinde formula in Section 7. The identities of $\Phi_{k}(U)$ are among the key ingredients in the proof of the equality of the dimensions as discussed in Section 7.

\bibliographystyle{plain}
\def\noopsort#1{}

\end{document}